\newtheorem{theorem}{Theorem}[section]
\newtheorem{lemma}[theorem]{Lemma}
\newtheorem{assumption}[theorem]{Assumption}
\newtheorem{definition}[theorem]{Definition}
\newtheorem{proposition}[theorem]{Proposition}
\newtheorem{remark}[theorem]{Remark}
\newtheorem{condition}[theorem]{Condition}
\numberwithin{equation}{section}
\newenvironment{proof}[1][Proof]{\textbf{#1.} }
{\ \rule{0.75em}{0.75em}\smallskip}
\begin{document}

\begin{center}
\Large\bf $\ell_{1}^{2}-\eta\ell_{2}^{2}$ regularization for sparse recovery
\end{center}

\begin{center}
Long Li\footnotemark[1] \quad and \quad Liang Ding\footnotemark[2]$^{,*}$

\footnotetext[1]{Department of Mathematics, Northeast Forestry University, Harbin 150040, China;
e-mail: {\tt 15146259835@nefu.edu.cn}.}
\footnotetext[2]{Department of Mathematics, Northeast Forestry University, Harbin 150040, China;
e-mail: {\tt dl@nefu.edu.cn}. The work of this author was supported by the Fundamental Research Funds for the Central Universities (no.\ 2572021DJ03).} 

\renewcommand{\thefootnote}{\fnsymbol{footnote}}
\footnotetext[1]{Corresponding author.}
\renewcommand{\thefootnote}{\arabic{footnote}}
\end{center}

\medskip
\begin{quote}
{\bf Abstract.}
This paper presents a regularization technique incorporating a non-convex and non-smooth term, $\ell_{1}^{2}-\eta\ell_{2}^{2}$, with parameters $0<\eta\leq 1$ designed to address ill-posed linear problems that yield sparse solutions. We explore the existence, stability, and convergence of the regularized solution, demonstrating that the $\ell_{1}^{2}-\eta\ell_{2}^{2}$ regularization is well-posed and results in sparse solutions. Under suitable source conditions, we establish a convergence rate of $\mathcal{O}\left(\delta\right)$ in the $\ell_{2}$-norm for both a priori and a posteriori parameter choice rules. Additionally, we propose and analyze a numerical algorithm based on a half-variation iterative strategy combined with the proximal gradient method. We prove convergence despite the regularization term being non-smooth and non-convex. The algorithm features a straightforward structure, facilitating implementation. Furthermore, we propose a projected gradient iterative strategy base on surrogate function approach to achieve faster solving. Experimentally, we demonstrate visible improvements of $\ell_{1}^{2}-\eta\ell_{2}^{2}$ over $\ell_{1}$, $\ell_{1}-\eta\ell_{2}$, and other nonconvex regularizations for compressive sensing and image deblurring problems. All the numerical results show the efficiency of our proposed approach.
\end{quote}

\smallskip
{\bf Keywords.} Sparsity regularization, non-convex and non-smooth, proximal gradient method, HV-$\left(\ell_{1}^{2}-\eta\ell_{2}^{2}\right)$ algorithm, projected gradient algorithm

\section{Introduction}\label{sec1}

\par This paper addresses the solution of an ill-posed operator equation of the form
\begin{equation}\label{equ1.1}
    Ax=y,
\end{equation}
where $A:\ell_{2}\rightarrow Y$ is a linear and bounded operator, $x$ is sparse and $Y$ is a Hilbert space with norm $\left\|\cdot\right\|_{Y}$.\ Throughout this paper, $\left<\cdot,\cdot\right>$ denotes the inner product in the $\ell_{2}$ space. In practical applications, the data $y$ is not provided exactly. Instead, we have an approximation $y^{\delta}$ such that $\left\|y-y^{\delta}\right\|_{Y}\leq\delta$ for a small $\delta >0$.\ The most widely adopted approach to solve the ill-posed operator equation \eqref{equ1.1} is through sparsity regularization, formulated as
\begin{equation}\label{equ1.2}
    \min_{x}\left\{\mathcal{J}_{\alpha}\left(x\right)=\frac{1}{q}\left\|Ax-y^{\delta}\right\|_{Y}^{q}+ \alpha\left\|x\right\|_{w,p}^{p}\right\},
\end{equation}
where $1\leq p<2$, $1\leq q\leq 2$, and $\alpha>0$. The term $\frac{1}{q}\left\|Ax-y^{\delta}\right\|_{Y}^{q}$ represents the fidelity term that quantifies the discrepancy between the data $y^{\delta}$ and the model, while $\left\|x\right\|_{w,p}^{p}=\sum_{i}w_{i}\left|\left<\phi_{i},x\right>\right|^{p}$ accounts for the weights, with $\left\{\phi_{i}\right\}$  being an orthonormal basis. In \cite{DDD04}, Daubechies employed a wavelet as an orthonormal basis and introduced the weighting $w=\mu w_{0}$, where $\mu>0$ is a constant and $w_{0}$ is a sequence with all entries equal to 1. Over the past two decades, the concept of sparsity has gained considerable popularity, leading to extensive research into well-posedness issues and the development of algorithms for tackling sparsity regularization problems (see \cite{F10,GL12,JM12} and the respective references therein).

\subsection{Some iterative algorithms for sparse regularization}

\par Sparse regularization stems from the introduction of a priori information through the $\ell_{1}$-norm, which serves as a convex relaxation of the $\ell_{0}$-norm to encourage sparsity in the regularization functional. In \cite{DDD04}, Daubechies introduced an iterative soft threshold algorithm (ISTA) designed for solving linear inverse problems that impose sparsity constraints and demonstrated the convergence of this algorithm. Beck and Teboulle proposed the fast iterative shrinkage threshold algorithm (FISTA) in \cite{BT09}. This method not only maintains the simplicity of the ISTA but also achieves improved global convergence speed, both theoretically and in practice. In \cite{DDFG10}, an iteratively reweighted least squares (IRLS) algorithm was introduced for recovering sparse signals from underdetermined linear systems. Voronin enhanced this IRLS algorithm for the regularization of linear inverse problems with sparse constraints and provided convergence proof for the improved algorithm in \cite{VD15}. In \cite{BLR15}, Bredies analyzed a general framework for non-smooth and non-convex regularization using the generalized gradient projection method. The alternating direction method of multipliers (ADMM) was utilized in \cite{WYZ19} to tackle non-convex and non-smooth optimization problems, along with its corresponding theoretical convergence analysis and results. Finally, Doe and Smith improved the classical ISTA in \cite{DS24} by incorporating entropy regularization, enhancing algorithm performance in addressing ill-posed linear inverse problems.

\par Theoretically, the $\ell_{1}$ norm is not the most effective method for achieving sparsity in solutions. Consequently, research shifted towards exploring non-convex sparse regularization techniques, as these non-convex methods have attracted considerable scholarly attention and investigation. A non-convex $\ell_{p}$-norm sparse regularization with $0<p<1$ has been proposed as an alternative to the approach in \eqref{equ1.2}, as seen in \cite{BL09} and \cite{G10}. However, due to the non-convexity and non-differentiability of the non-convex regularization problem, its analysis and solution tend to be more complex, limiting the exploration of its regularization properties, particularly concerning the convergence rate. To analyze $\ell_{p}$-norm sparse regularization with $0<p<1$, specialized regularization techniques are required. In \cite{N13}, $\ell_{0}$-norm regularization problems are examined within finite-dimensional spaces. Meanwhile,  \cite{IK14} investigates sparsity optimization within infinite-dimensional sequence spaces $\ell_{p}$ with $0\leq p\leq 1$. Additionally, \cite{YLHX15} introduces a continuous regularization term that lies between the $\ell_{1}$-norm and $\ell_{2}$-norm, focusing on minimizing the regularization functional in finite-dimensional spaces to address the compressed sensing problem. In \cite{HM10}, a novel regularization term called \verb+"+sort $\ell_{1}$\verb+"+ is presented, accompanied by theoretical analysis, including considerations of convergence speed and stability estimates. For insights into the non-convex sparsity regularization of nonlinear ill-posed problems, one can refer to \cite{RZ12,Z09} and their corresponding references.

\par $\ell_{p}\ (0<p<1)$ sparse regularization represents a viable non-convex approximation of $\ell_{0}$ sparse regularization. The specific choice of the parameter $p$ is pivotal in determining the effectiveness of this approximation. However, to date, only the case where $p=1/2$ has been effectively solvable, and there is currently a lack of numerical algorithms capable of addressing other values of $p$. Furthermore, no evidence suggests that $p=1/2$ is the optimal non-convex approximation of $\ell_{p}(0<p<1)$. In \cite{DH19}, a non-convex $\ell_{1}-\eta\ell_{2}$ sparse regularization problem is introduced
\begin{equation}\label{equ1.3}
    \min\left\{\mathcal{J}_{\alpha,\beta}^{\delta}(x)= \frac{1}{q}\left\|Ax-y^{\delta}\right\|_{Y}^{q}+\alpha\left\|x\right\|_{\ell_{1}} -\beta\left\|x\right\|_{\ell_{2}}\right\},\ \left(\alpha\geq\beta>0\right).
\end{equation}
Additionally, the ST-$\left(\alpha\ell_{1}-\beta\ell_{2}\right)$ iterative algorithm for solving problem \eqref{equ1.3} when $q=2$ is proposed
\begin{align*}
    \left\{             
        \begin{array}{ll}
        z^{k}=\mathbb{S}_{\alpha/\gamma}\left(x^{k}+\frac{\beta}{\gamma\left\|x^{k}\right\| _{\ell_{2}}}x^{k}+\frac{1}{\gamma}A^{*}\left(Ax^{k}-y^{\delta}\right)\right), \\
        x^{k+1}=x^{k}+s^{k}\left(z^{k}-x^{k}\right), \\
        \end{array}
    \right.
\end{align*}
where $\mathbb{S}_{\alpha/\gamma}$ represents the soft-thresholding operator, $s^{k}$ is the step size and $\gamma>0$. However, the term $\beta\left\|x\right\|_{\ell_{2}}$ in \eqref{equ1.3} is non-differentiable, complicating the convergence analysis of the iterative algorithm and necessitating additional conditions to prevent the iteration value from being zero. Additionally, the term $x^{k}/\left\|x^{k}\right\|_{\ell_{2}}$ in the iterative algorithm unites $x^{k}$, which may result in slower updates for the algorithm when $\left\|x^{k}\right\|_{\ell_{2}}$ is large. A natural approach is to consider the regularization term $\alpha\left\|x\right\|_{\ell_{1}}-\beta\left\|x\right\|_{\ell_{2}}^{2}$ regularization term. However, it is essential to ensure that $\left\|x\right\|_{\ell_{2}}<1$, as violating this condition can lead to a negative regularization term, undermining the functional's regularization properties. While it is possible to scale $x$ to meet the requirement $\left\|x\right\|_{\ell_{2}}<1$, this remains a strict condition since the value of $\left\|x\right\|_{\ell_{2}}$ is unknown prior to solving the problem.

\subsection{Contribution and organization}

In this paper, by the proposal of $\left\|x\right\|_{\ell_{1}}^{2}$ proximal operator in \cite{BA17}, we consider a non-convex $\ell_{1}^{2}-\eta\ell_{2}^{2}$ sparse regularization to address these challenges. This reformulation allows us to avoid the complexities associated with non-differentiable terms by substituting $\left\|x\right\|_{\ell_{2}}$ with $\left\|x\right\|_{\ell_{2}}^{2}$, enabling the proposal of an algorithm that leverages the proximal gradient method to achieve improved solution results.

\begin{figure}[htbp]
    \centering
    \subfigure[$\ell_{1}$]{\includegraphics[width=0.25\columnwidth,height=0.2\linewidth]{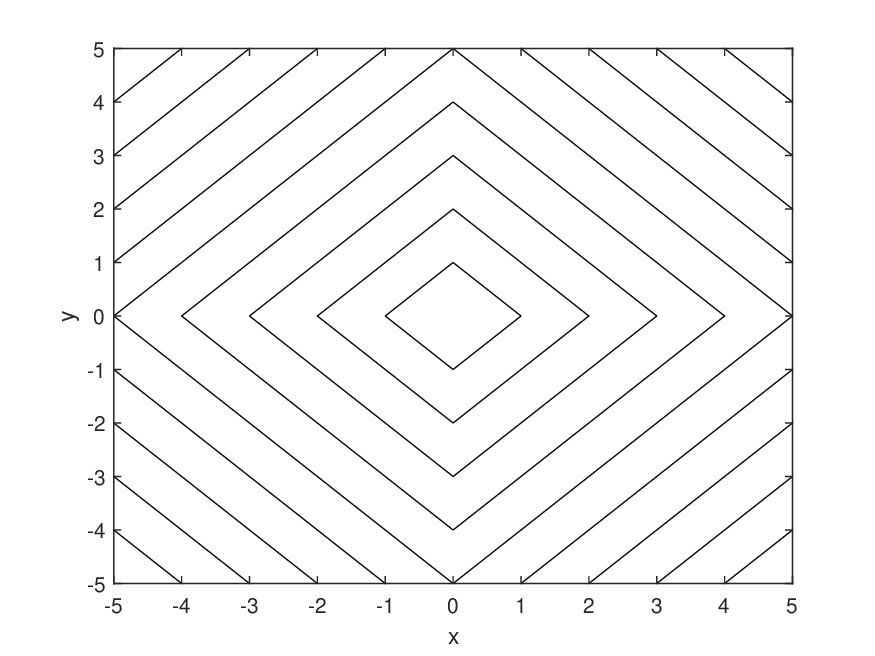}}
    \subfigure[$\ell_{1/2}$]{\includegraphics[width=0.25\columnwidth,height=0.2\linewidth]{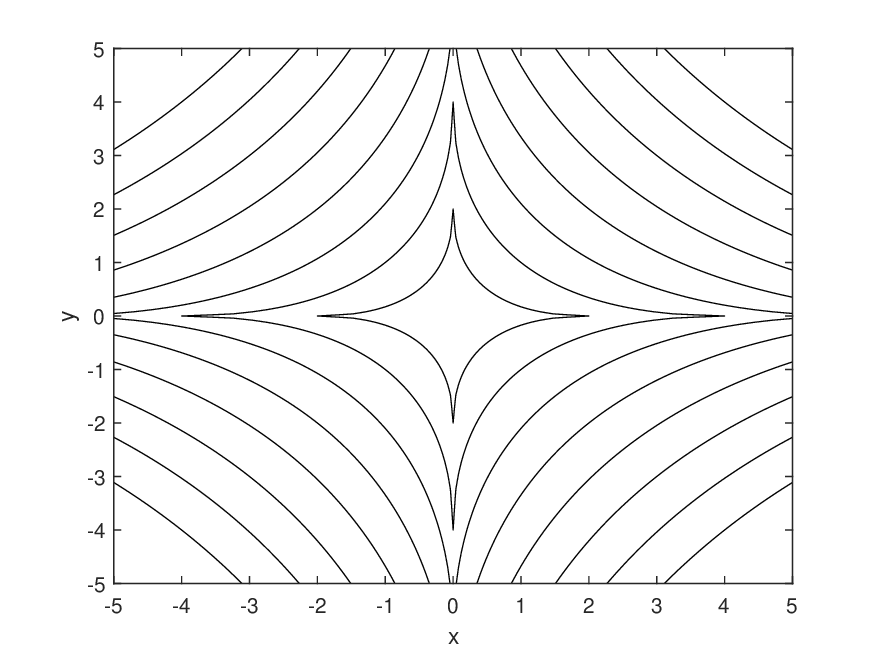}}
    \subfigure[$\ell_{1}^{2}-\eta\ell_{2}^{2}$ with $\eta=0.1$]{\includegraphics[width=0.25\columnwidth,height=0.2\linewidth]{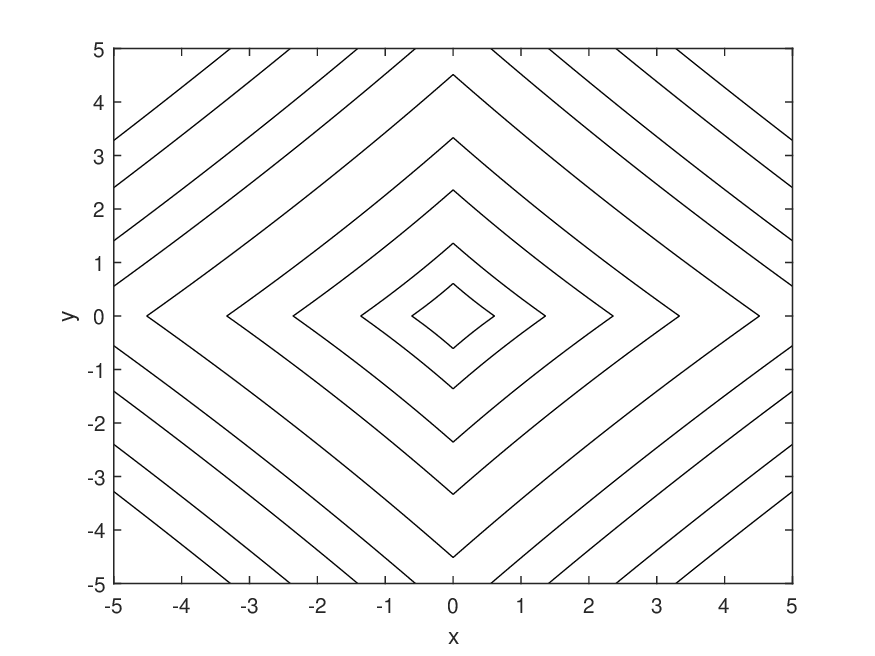}}\\
    \subfigure[$\ell_{1}^{2}-\eta\ell_{2}^{2}$ with $\eta=0.5$]{\includegraphics[width=0.25\columnwidth,height=0.2\linewidth]{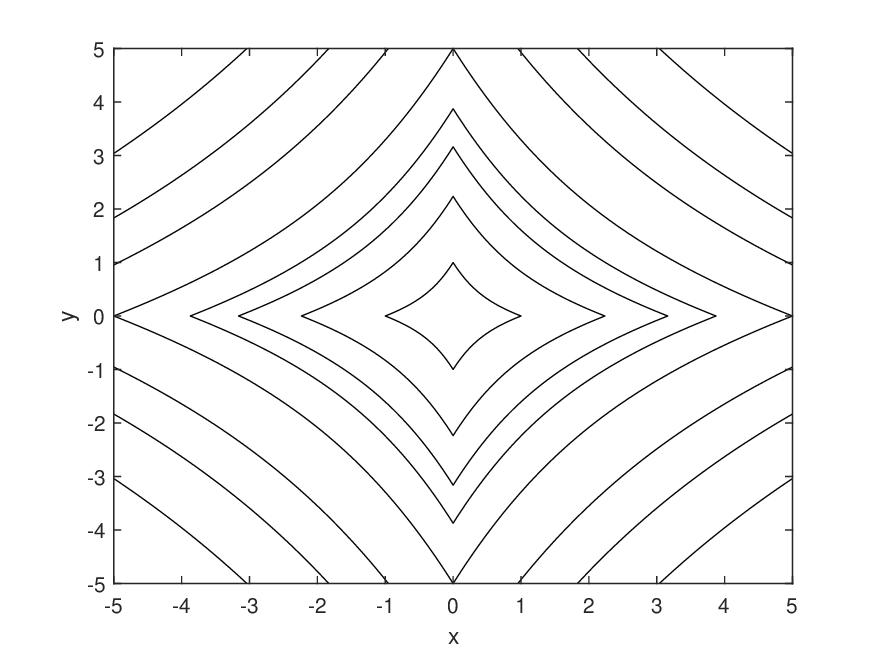}}
    \subfigure[$\ell_{1}^{2}-\eta\ell_{2}^{2}$ with $\eta=0.9$]{\includegraphics[width=0.25\columnwidth,height=0.2\linewidth]{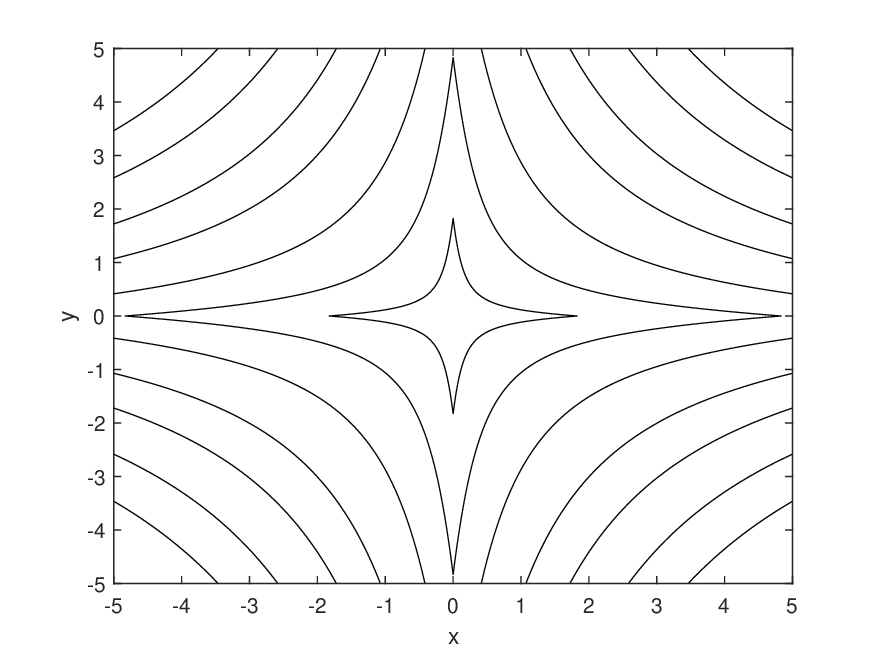}}
    \subfigure[$\ell_{1}^{2}-\eta\ell_{2}^{2}$ with $\eta=1$]{\includegraphics[width=0.25\columnwidth,height=0.2\linewidth]{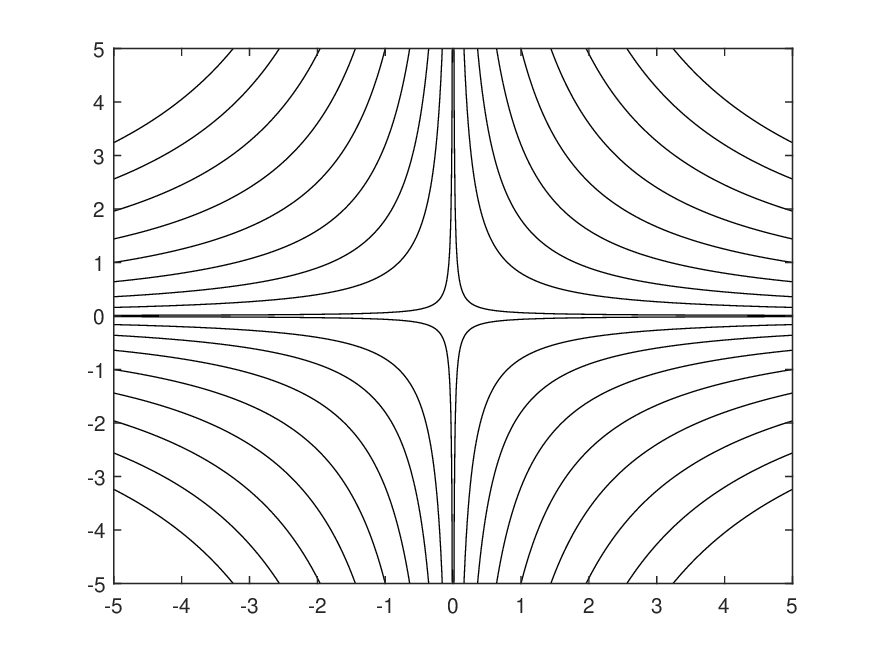}}
    \caption{Contour plots of $\ell_{1}$, $\ell_{1/2}$ and $\ell_{1}^{2}-\eta\ell_{2}^{2}$ with different $\eta$.}
    \label{figure:1}
\end{figure}

\par The objective of this paper is to explore the following regularization method for solving the ill-posed linear equation \eqref{equ1.1}:
\begin{equation}\label{equ1.4}
    \min\left\{\mathcal{J}_{\alpha,\beta}^{\delta}(x)= \frac{1}{q}\left\|Ax-y^{\delta}\right\|_{Y}^{q}+\mathcal{R}_{\alpha,\beta}(x)\right\}
\end{equation}
in $\ell_{2}$ space, utilizing the standard $\ell_{2}$-norm $\left\|\cdot\right\|_{\ell_{2}}$, where $1\leq q\leq 2$. We define the regularization term as follows
\begin{equation}\label{equ1.5}
    \mathcal{R}_{\alpha,\beta}\left(x\right):=\alpha\left\|x\right\|_{\ell_{1}}^{2}-\beta\left\|x\right\|_{\ell_{2}}^{2},\ \alpha\geq\beta >0.
\end{equation}
By setting $\eta=\beta/\alpha$, we can equivalently express the functional in \eqref{equ1.5} as
\begin{equation*}
    \mathcal{R}_{\alpha,\beta}(x)=\alpha\mathcal{R}_{\eta}(x):= \alpha\scalebox{1.2}{(} \left\|x\right\|_{\ell_{1}}^{2}-\eta\left\|x\right\|_{\ell_{2}}^{2}\scalebox{1.2}{)},
\end{equation*}
where $\alpha>0$ and $1\geq\eta> 0$. In Fig.\ \ref{figure:1}, we present contour plots of $\ell_{1}$, $\ell_{1/2}$, and $\ell_{1}^{2}-\eta\ell_{2}^{2}$ with different $\eta=\beta/\alpha$. It is observed that $\mathcal{R}_{\eta}\left(x\right)$ exhibits behavior akin to the $\ell_{0}$-norm as $\eta\rightarrow 1$, and converges to a constant multiple of the $\ell_{1}$-norm as $\eta\rightarrow 0$. When $\eta=1$, $\mathcal{R}_{\eta}\left(x\right)$ serves as a good approximation of a constant multiple of $\left\|x\right\|_{\ell_{0}}$. However, the contour of $\mathcal{R}_{\eta}\left(x\right)$ does not intersect the coordinate axes. The primary motivation for investigating minimization with the regularization term \eqref{equ1.5} is that $\left\|x\right\|_{\ell_{1}}^{2}-\eta\left\|x\right\|_{\ell_{2}}^{2}$ can be perceived as an approximation of $\left\|x\right\|_{\ell_{0}}$. This approach presents a simpler structure compared to regularization techniques using the $\ell_{0}$-norm and $\ell_{p}$-norms for $0<p <1$. Commonly applied norms, such as the $\ell_{1}$-norm and $\ell_{2}$-norm, along with their derivatives, can be easily computed, facilitating a numerical solution to problem \eqref{equ1.4} through an iterative half-variation algorithm.

\par This paper investigates the regularizing properties and numerical algorithms associated with problem \eqref{equ1.4}. It presents four key contributions:

\textbullet\ We provide a comprehensive proof of the coercivity, weak lower semi-continuity, and Radon-Riesz property of the regularization term $\mathcal{R}_{\eta}(x)=\left\|x\right\|_{\ell_{1}}^{2}-\eta\left\|x\right\|_{\ell_{2}}^{2}$. Building upon classical regularization techniques, we further investigate the well-posedness of the regularization method, which includes analyses of existence, stability, convergence, and the sparsity of the solution. Given the non-convex nature of the regularization term $\mathcal{R}_{\eta}(x)$, additional considerations are necessary. Under a supplementary source condition, we derive an inequality that facilitates our analysis. Utilizing this inequality, we establish that the convergence rate in the $\ell_{2}$-norm is $\mathcal{O}(\delta)$.

\textbullet\ For the numerical method, we adopt a technique presented in \cite{BA17} to express the functional $\mathcal{J}_{\alpha,\beta}^{\delta}$ in \eqref{equ1.4} in the form
\begin{equation*}
  \mathcal{J}_{\alpha,\beta}^{\delta}\left(x\right)=f\left(x\right)+g\left(x\right),
\end{equation*}
where $f\left(x\right)=\frac{1}{2}\left\|Ax-y^{\delta}\right\|_{Y}^{2} -\beta\left\|x\right\|_{\ell_{2}}^{2}$ and $g\left(x\right)=\alpha\left\|x\right\|_{\ell_{1}}^{2}$. We establish that $f\left(x\right)$ and $g\left(x\right)$  satisfy the requisite smoothness and convexity conditions necessary for the application of the proximal gradient method, as delineated in \cite{BA17}. We further propose a half variation (HV) iterative algorithm for addressing problem \eqref{equ1.4}, articulated as
\begin{equation*}
    x^{k+1}=\mathbb{H}_{\alpha}\left(x^{k}+ \frac{2\beta}{L_{k}} x^{k}- \frac{1}{L_{k}}A^{*}\left(Ax^{k}-y^{\delta}\right)\right),
\end{equation*}
where $L_{k}\in (\frac{L}{2},\infty)$ and
\begin{align*}
    \mathbb{H}_{\alpha}(x)=
    \left\{           
             \begin{array}{ll}
             \sum_{i=1}^{n}\frac{\lambda_{i}}{2\alpha+\lambda_{i}}\left<x, e_{i}\right>e_{i}, & if \ x\neq 0, \\
             0, & if \ x= 0. \\
             \end{array}
    \right.
\end{align*}
where $L$ denote the Lipschitz constant of $f'\left(x\right)$, $\lambda_{i}$ is defined in Lemma \ref{lem4.4}, and $e_{i}$ is defined
in Remark \ref{rem2.4}. Moreover, we provide a convergence proof for the proposed algorithm.

\textbullet\ While the HV-$\left(\alpha\ell_{1}^{2}-\beta\ell^{2}_{2}\right)$ algorithm yields superior results, it necessitates considerable computational time to solve the multivariate linear equation at each iteration due to the parameter $\lambda_{i}$. To circumvent this computational bottleneck, we substitute the operator $\mathbb{H}_{\alpha}$ with the projection operator $\mathbb{P}_{R}$, thereby proposing the PG-$\left(\alpha\ell_{1}^{2}-\beta\ell_{2}^{2}\right)$ algorithm based on the surrogate function method formulated as
\begin{equation*}
     x^{k+1}=\mathbb{P}_{R}\left(\frac{\gamma}{\gamma-2\beta}x^{k} -\frac{1}{\gamma-2\beta}A^{*}\left(Ax^{k}-y^{\delta}\right)\right)
\end{equation*}
where $\gamma>2\beta$ adheres to specific conditions. In contrast to the implicit fixed point projection algorithms PG-GCGM and PG-SF as discussed in \cite{DH20}, the PG-$\left(\alpha\ell_{1}^{2}-\beta\ell_{2}^{2}\right)$ algorithm is characterized by its fully explicit nature. Morozov's discrepancy principle determines the constraint radius $R$. Furthermore, we substantiate the convergence.

\textbullet\ The experimental results consistently highlight the competitiveness of our proposed method compared to various traditional iterative algorithms. The HV-$\left(\ell_{1}^{2}-\eta\ell^{2}_{2}\right)$ algorithm achieves superior reconstruction outcomes compared to ISTA, FISTA, and ST-$\left(\alpha\ell_{1}-\beta\ell_{2}\right)$ algorithm. Furthermore, the PG-$\left(\ell_{1}^{2}-\eta\ell^{2}_{2}\right)$ algorithm demonstrates enhanced reconstruction performance while significantly reducing solution time. Overall, the experimental findings validate the efficacy of the proposed algorithms in addressing sparse inverse problems.

\par The organization of this paper is as follows: Section \ref{sec2} proves the coercivity, weak lower semi-continuity and Radon-Riesz property of the regularization term. Section \ref{sec3} addresses the well-posedness and convergence rate results of regularized solution correlated with the $\ell_{1}^{2}-\eta\ell^{2}_{2}$ regularization in $\ell_{2}$ space. Section \ref{sec4} introduces a novel half-variation iterative algorithm inspired by the proximal gradient method. Section \ref{sec5} presents a PG-$\left(\ell_{1}^{2}-\eta\ell_{2}^{2}\right)$ algorithm that integrates the projected gradient and surrogate function methodologies. Lastly, Section \ref{sec6} discusses the outcomes of our numerical experiments.

\section{Preliminaries}\label{sec2}

\par We define a minimizer of the regularization functional $\mathcal{J}_{\alpha,\beta}^{\delta}(x)$ in \eqref{equ1.4} as
\begin{equation}\label{equ2.1}
    x_{\alpha,\beta}^{\delta}:=\arg\min_{x\in\ell_{2}}\left\{\mathcal{J}_{\alpha,\beta}^{\delta}(x)= \frac{1}{q}\left\|Ax-y^{\delta}\right\|_{Y}^{q}+\mathcal{R}_{\alpha,\beta}(x)\right\}
\end{equation}
for every $\alpha\geq\beta> 0$. We adopt the following definition of the $\mathcal{R}_{\eta}$-minimum solution \cite{EHN96}.

\begin{definition}\label{def2.1}
    An element $x^{\dagger}\in\ell_{2}$ is referred to as an $\mathcal{R}_{\eta}$-minimum solution for the linear problem $Ax=y$ if
    \begin{equation*}
        Ax^{\dagger}=y\ and\ \mathcal{R}_{\eta}\left(x^{\dagger}\right)=\min_{x\in\ell_{2}}\left\{\mathcal{R}_{\eta}(x)\ \mid\ Ax=y \right\}.
    \end{equation*}
\end{definition}

\par We will also revisit the definition of sparsity \cite{DDD04}.

\begin{definition}\label{def2.2}
     A vector $x\in\ell_{2}$ is termed sparse if ${\rm supp}(x):=\left\{i\in\mathbb{N} \mid x_{i}\neq 0\right\}$ is finite, where $x_{i}$ denotes the $i{\rm th}$ component of $x$. The notation $\left\|x\right\|_{\ell_{0}}:={\rm supp}(x)$ represents the cardinality of ${\rm supp}(x)$. If $\left\|x\right\|_{\ell_{0}}=s$ for some $s\in\mathcal{N}$, we classify $x\in\ell_{2}$ as $s$-sparse.
\end{definition}

\begin{definition}\label{def2.3}
    We define
    \begin{equation*}
        \mathcal{I}(x^{\dagger})=\left\{i\in\mathbb{N}\ \mid\ x_{i}^{\dagger}\neq 0\right\},
    \end{equation*}
    where $x_{i}^{\dagger}$ is the $i{\rm th}$ component of the $\mathcal{R}_{\eta}$-minimum solution $x^{\dagger}$ of the linear problem $Ax=y$.
\end{definition}

\begin{remark}\label{rem2.4}
    If $x^{\dagger}$ is sparse, i.e., $\mathcal{I}(x^{\dagger})$ is finite, then there exists a constant $m>0$ such that
    \begin{equation*}
        \min_{i\in\mathcal{I}(x^{\dagger})}\left|x_{i}^{\dagger}\right|=m,
    \end{equation*}
    where $e_{i}=(\underbrace{0,\cdots,0,1}_{i},0,\cdots)$.
\end{remark}

\begin{lemma}\label{lem2.5}
    \textbf{(Coercivity)} Assume $\alpha>\beta\geq 0$ and $\eta=\beta/\alpha$. The functional $\mathcal{R}_{\eta}:\ell_{2}\rightarrow \left[0, +\infty\right]$ is coercive, meaning that if $\left\|x\right\|_{\ell_{2}}\rightarrow +\infty$, then $\mathcal{R}_{\eta}(x)\rightarrow +\infty$.
\end{lemma}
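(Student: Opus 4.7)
The plan is to reduce coercivity to the elementary embedding inequality $\|x\|_{\ell_2} \leq \|x\|_{\ell_1}$, which holds for every sequence $x$ (with the convention that the right-hand side equals $+\infty$ when $x \notin \ell_1$). Since $\mathcal{R}_{\eta}$ is declared to take values in $[0,+\infty]$, I would first dispose of the trivial case $x \in \ell_2 \setminus \ell_1$: then $\|x\|_{\ell_1} = +\infty$, so $\mathcal{R}_{\eta}(x) = +\infty$ directly, and there is nothing to prove.

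The substantive case is $x \in \ell_1 \cap \ell_2$. Here I would expand
\begin{equation*}
\|x\|_{\ell_1}^2 = \Bigl(\sum_i |x_i|\Bigr)^2 = \sum_i |x_i|^2 + \sum_{i\neq j} |x_i||x_j| \geq \sum_i |x_i|^2 = \|x\|_{\ell_2}^2,
\end{equation*}
which is just the standard fact that all cross terms are nonnegative. Substituting this into the definition of $\mathcal{R}_{\eta}$ yields
\begin{equation*}
\mathcal{R}_{\eta}(x) = \|x\|_{\ell_1}^2 - \eta\|x\|_{\ell_2}^2 \geq (1-\eta)\|x\|_{\ell_2}^2.
\end{equation*}

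Finally, the hypothesis $\alpha > \beta \geq 0$ with $\eta = \beta/\alpha$ gives $0 \leq \eta < 1$, so the coefficient $1-\eta$ is strictly positive. Therefore $\mathcal{R}_{\eta}(x) \to +\infty$ whenever $\|x\|_{\ell_2} \to +\infty$, which is exactly coercivity.

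There is no real obstacle in this argument; the entire content is the $\ell_1 \hookrightarrow \ell_2$ norm bound combined with the strict inequality $\eta < 1$. It is worth noting in passing that the strictness $\alpha > \beta$ cannot be dropped: for $\eta = 1$, the sequence $x = c\,e_1$ with $c \to \infty$ satisfies $\mathcal{R}_1(x) \equiv 0$, so coercivity genuinely fails at the boundary case and the hypothesis of the lemma is sharp.
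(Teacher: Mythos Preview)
Your proof is correct and follows essentially the same route as the paper: both reduce to the inequality $\|x\|_{\ell_1}\ge\|x\|_{\ell_2}$ and conclude $\mathcal{R}_{\eta}(x)\ge(1-\eta)\|x\|_{\ell_2}^2$. Your version is slightly more careful in explicitly treating the case $x\in\ell_2\setminus\ell_1$ and in justifying the norm comparison via the expansion of the square, and your remark on the failure at $\eta=1$ matches exactly the paper's observation following the lemma.
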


\begin{proof}
    We note that $\left\|x\right\|_{\ell_{1}}\geq\left\|x\right\|_{\ell_{2}}$. Consequently, we have
    \begin{equation*}
        \mathcal{R}_{\eta}\left(x\right)= \left(\left\|x\right\|_{\ell_{1}}^{2}-\left\|x\right\|_{\ell_{2}}^{2}\right)+ \left(1-\eta\right)\left\|x\right\|_{\ell_{2}}^{2} \geq \left(1-\eta\right)\left\|x\right\|_{\ell_{2}}^{2},
    \end{equation*}
    which clearly implies that $\left\|x\right\|_{\ell_{2}}\rightarrow +\infty$ results in $\mathcal{R}_{\eta}(x)\rightarrow +\infty$.
\end{proof}

\par It is important to note that $\mathcal{R}_{\eta}(x)$ is not coercive when $\eta=1$. For instance, let $x=(\underbrace{0,\cdots,0,x_{i}}_{i},0,\cdots)$, in this case, $\left\|x\right\|_{\ell_{2}}\rightarrow +\infty$ as $\left|x_{i}\right|\rightarrow +\infty$. Nevertheless, $\mathcal{R}_{\eta}(x)\equiv 0$ remains constant for any choice of $x_{i}$.

\par Next, we revisit an extension of Fatou's lemma \cite{C00}

\begin{lemma}\label{lem2.6}
    Let $f_{1}, f_{2}, \cdots$ be a sequence of real-valued measurable functions defined on a measure space $(S, \Sigma, \mu)$. If there exists an integrable function $g$ on $S$ such that $f_{n}\geq -g$ for all $n$, then we have
    \begin{equation*}
        \int_{S}\liminf_{n}f_{n}d\mu \leq \liminf_{n}\int_{S}f_{n}d\mu.
    \end{equation*} 
\end{lemma}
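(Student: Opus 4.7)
The plan is to reduce the statement to the classical (nonnegative) Fatou lemma by translating the sequence so that every term becomes nonnegative. First I would set $h_n := f_n + g$ for each $n$. By the hypothesis $f_n \geq -g$, each $h_n$ is a nonnegative measurable function on $(S,\Sigma,\mu)$, and the usual Fatou lemma applies to give
\begin{equation*}
\int_S \liminf_n h_n \, d\mu \;\leq\; \liminf_n \int_S h_n \, d\mu.
\end{equation*}

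Next I would rewrite both sides in terms of $f_n$ and $g$. Since $g$ is fixed (independent of $n$), one has $\liminf_n h_n = g + \liminf_n f_n$ pointwise, so the left-hand side equals $\int_S g\,d\mu + \int_S \liminf_n f_n\,d\mu$. For the right-hand side, linearity of the integral (valid since $h_n\geq 0$ and $g$ is integrable) gives $\int_S h_n\,d\mu = \int_S f_n\,d\mu + \int_S g\,d\mu$, where each $\int_S f_n\,d\mu \in (-\infty,+\infty]$ is well-defined because the negative part $f_n^-$ is dominated by the integrable function $g$. Taking $\liminf$ on both sides and pulling out the finite constant $\int_S g\,d\mu$ yields
\begin{equation*}
\liminf_n \int_S h_n \, d\mu \;=\; \int_S g\,d\mu \;+\; \liminf_n \int_S f_n\,d\mu.
\end{equation*}
Subtracting the finite number $\int_S g\,d\mu$ from both sides of the Fatou inequality then produces the desired bound.

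The only delicate points, which I would write out carefully, are bookkeeping with the extended real line: (i) $\int_S \liminf_n f_n\,d\mu$ is well-defined in $(-\infty,+\infty]$ precisely because $\liminf_n f_n \geq -g$ with $g$ integrable; and (ii) extracting the constant $\int_S g\,d\mu$ from the $\liminf$ and then subtracting it preserves the inequality only because $\int_S g\,d\mu$ is finite. These are the only places where the integrability of $g$ is actually used; the substantive content is entirely carried by the nonnegative Fatou lemma, so I do not expect a genuine obstacle beyond this verification.
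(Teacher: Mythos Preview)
Your proof is correct and is the standard reduction to the nonnegative Fatou lemma. Note, however, that the paper does not supply its own proof of this lemma: it simply states the result as an ``extension of Fatou's lemma'' and cites the textbook \cite{C00}, so there is no in-paper argument to compare against.
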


\begin{lemma}\label{lem2.7}
    \textbf{(Weak lower semi-continuity)} Let $M>0$ be given. Then, for any sequence $x_{n}\in\ell_{2}$ such that $\mathcal{R}_{\eta}(x_{n})\leq M$, if $\left\{x_{n}\right\}$ weakly converging to $x\in\ell_{2}$, it follows that
    \begin{equation*}
        \liminf_{n}\mathcal{R}_{\eta}(x_{n}) \geq \mathcal{R}_{\eta}(x).
    \end{equation*}
\end{lemma}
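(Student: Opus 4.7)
The plan is to rewrite $\mathcal{R}_{\eta}$ as a sum of two manifestly non-negative functionals, each of which is weakly lower semi-continuous on $\ell_{2}$, and then add the two inequalities. The sign issue in the statement is the only real subtlety: the term $-\eta\|x\|_{\ell_{2}}^{2}$ is weakly \emph{upper} (not lower) semi-continuous on $\ell_{2}$, so treating $\|x\|_{\ell_{1}}^{2}$ and $\|x\|_{\ell_{2}}^{2}$ separately via their own lower semi-continuity would yield an estimate going the wrong direction. A direct algebraic manipulation of the $\|x\|_{\ell_{1}}^{2}$ term cancels the troublesome part and removes this obstacle.

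The key identity I would use is
\begin{equation*}
    \left\|x\right\|_{\ell_{1}}^{2}=\sum_{i}\left|x_{i}\right|^{2}+2\sum_{i<j}\left|x_{i}\right|\left|x_{j}\right|=\left\|x\right\|_{\ell_{2}}^{2}+2\sum_{i<j}\left|x_{i}\right|\left|x_{j}\right|,
\end{equation*}
which lets me rewrite the regularizer as
\begin{equation*}
    \mathcal{R}_{\eta}(x)=\left(1-\eta\right)\left\|x\right\|_{\ell_{2}}^{2}+2\sum_{i<j}\left|x_{i}\right|\left|x_{j}\right|.
\end{equation*}
Because $0<\eta\leq 1$, both summands are pointwise non-negative; the decomposition has replaced a difference by a sum of terms compatible with Fatou-type arguments.

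Next, using that $x_{n}\weak x$ in $\ell_{2}$, testing against $e_{i}$ gives componentwise convergence $x_{n,i}\to x_{i}$, hence $\left|x_{n,i}\right|\left|x_{n,j}\right|\to\left|x_{i}\right|\left|x_{j}\right|$ for every pair $i<j$. Applying Lemma \ref{lem2.6} with counting measure on the index set $\{(i,j):i<j\}$ and with $g\equiv 0$ (the integrands are non-negative) yields
\begin{equation*}
    \sum_{i<j}\left|x_{i}\right|\left|x_{j}\right|\leq\liminf_{n}\sum_{i<j}\left|x_{n,i}\right|\left|x_{n,j}\right|.
\end{equation*}
For the first summand I invoke the standard weak lower semi-continuity of the Hilbert-space norm squared, namely $\|x\|_{\ell_{2}}^{2}\leq\liminf_{n}\|x_{n}\|_{\ell_{2}}^{2}$, which uses only $(1-\eta)\geq 0$ to preserve the direction of the inequality after multiplication.

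Finally, adding the two bounds via the elementary inequality $\liminf_{n}a_{n}+\liminf_{n}b_{n}\leq\liminf_{n}(a_{n}+b_{n})$ delivers $\mathcal{R}_{\eta}(x)\leq\liminf_{n}\mathcal{R}_{\eta}(x_{n})$. As a side check I would note that the assumption $\mathcal{R}_{\eta}(x_{n})\leq M$ together with the uniform $\ell_{2}$-bound supplied by weak convergence forces $\|x_{n}\|_{\ell_{1}}$ to be uniformly finite, and the Fatou estimate above in turn gives $x\in\ell_{1}$, so all quantities appearing in the conclusion are well defined. The main obstacle, as already flagged, is the sign in front of $\|x\|_{\ell_{2}}^{2}$; it is dissolved by the decomposition into $(1-\eta)\|x\|_{\ell_{2}}^{2}$ plus the off-diagonal double sum.
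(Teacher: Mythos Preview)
Your proof is correct and takes a genuinely different route from the paper's. The paper works directly with the difference $\mathcal{R}_{\eta}(x_{n})-\mathcal{R}_{\eta}(x)$, factoring it as a single-index series $\sum_{i}c_{n}^{i}\bigl(|x_{n}^{i}|-|x^{i}|\bigr)\bigl(\|x_{n}\|_{\ell_{1}}+\|x\|_{\ell_{1}}\bigr)$ with coefficients $c_{n}^{i}\in(0,1]$, bounds each summand below by $-|x^{i}|$, and then invokes the extended Fatou Lemma~\ref{lem2.6} with the nontrivial dominating function $g=|x^{\cdot}|$; this is precisely where the hypothesis $\mathcal{R}_{\eta}(x_{n})\leq M$ is used, to guarantee $\sum_{i}|x^{i}|<\infty$. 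Your square-expansion identity $\|x\|_{\ell_{1}}^{2}=\|x\|_{\ell_{2}}^{2}+2\sum_{i<j}|x_{i}||x_{j}|$ dissolves the sign obstacle at the outset, leaving $\mathcal{R}_{\eta}$ as a sum of two non-negative weakly l.s.c.\ pieces (Hilbert-norm l.s.c.\ plus ordinary Fatou with $g\equiv 0$). This is shorter, does not actually rely on the a~priori bound $M$, and makes the borderline case $\eta=1$ of Remark~\ref{rem2.8} immediate; the paper's argument, by contrast, stays within single-index sums and hews closer to the telescoping computations used elsewhere in Section~\ref{sec2}.
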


\begin{proof}
    By the definition of $\mathcal{R}_{\eta}$ as presented in \eqref{equ1.5}, we obtain
    \begin{align}\label{equ2.2}
        \mathcal{R}_{\eta}(x_{n})-\mathcal{R}_{\eta}(x)
        & = \left(\left\|x_{n}\right\|_{\ell_{1}}^{2}-\left\|x\right\|_{\ell_{1}}^{2}\right)- \eta\left(\left\|x_{n}\right\|_{\ell_{2}}^{2}-\left\|x\right\|_{\ell_{2}}^{2}\right) \nonumber\\
        & =\sum_{i}\left(\left\|x_{n}\right\|_{\ell_{1}}+\left\|x\right\|_{\ell_{1}}\right) \left(\left|x_{n}^{i}\right|-\left|x^{i}\right|\right)- \eta\sum_{i}\left(\left|x_{n}^{i}\right|+ \left|x^{i}\right|\right)\left(\left|x_{n}^{i}\right|-\left|x^{i}\right|\right)  \nonumber\\
        & =\sum_{i}\left[\left(\left\|x_{n}\right\|_{\ell_{1}}+\left\|x\right\|_{\ell_{1}}\right) -\eta\left(\left|x_{n}^{i}\right|+\left|x^{i}\right|\right)\right]\left(\left|x_{n}^{i}\right|-\left|x^{i}\right|\right), 
    \end{align}
    where $x_{n}^{i}$ and $x^{i}$ denote the $i$th components of $x$ and $x_{n}$, respectively. If $x_{n}\neq 0$ or $x\neq 0$, we define $c_{n}^{i}:=1 -\frac{\eta\left(\left|x_{n}^{i}\right|+\left|x^{i}\right|\right)} {\left\|x_{n}\right\|_{\ell_{1}}+\left\|x\right\|_{\ell_{1}}}$, resulting in $0<1-\eta\leq c_{n}^{i}\leq 1$. If $x_{n}=0$ and $x=0$, we let $c_{n}^{i}=0$. From \eqref{equ2.2}, we have
    \begin{equation}\label{equ2.3}
        \liminf_{n}\left[\mathcal{R}_{\eta}(x_{n})-\mathcal{R}_{\eta}(x)\right] =\liminf_{n}\left[\sum_{i}c_{n}^{i}\left(\left|x_{n}^{i}\right|-\left|x^{i}\right|\right) \left(\left\|x_{n}\right\|_{\ell_{1}}+\left\|x\right\|_{\ell_{1}}\right)\right].
    \end{equation}
    By the definition of $c_{n}^{i}$, it follows that
    \begin{equation}\label{equ2.4}
        c_{n}^{i}\left(\left|x_{n}^{i}\right|-\left|x^{i}\right|\right)\geq -c_{n}^{i}\left|x^{i}\right| \geq -\left|x^{i}\right|.
    \end{equation}
    Meanwhile, since $\mathcal{R}_{\eta}(x_{n})\leq M$, we can conclude that $\left\{\left\| x_{n}\right\|_{\ell_{1}}\right\}$ is bounded. Consequently, from $\left\|x\right\|_{\ell_{1}}\leq\liminf_{n}\left\|x_{n}\right\|_{\ell_{1}}$, it follows that $\left\|x\right\|_{\ell_{1}}$ must be finite. Thus, we obtain
    \begin{equation}\label{equ2.5}
        \sum_{i}\left|x^{i}\right|\neq\infty.
    \end{equation}
    Using \eqref{equ2.4} and \eqref{equ2.5}, we can apply Lemma \ref{lem2.6} to derive
    \begin{equation}\label{equ2.6}
        \liminf_{n}\left[\sum_{i}c_{n}^{i}\left(\left|x_{n}^{i}\right|-\left|x^{i}\right|\right)\right] \geq\sum_{i}\liminf_{n}\left(c_{n}^{i}\left|x_{n}^{i}\right|-c_{n}^{i}\left|x^{i}\right|\right).
    \end{equation}
    From the weak convergence of $x_{n}$ to $x$, we can deduce that $\left|x_{n}^{i}\right|\rightarrow \left|x^{i}\right|$ for all $i\in\mathbb{N}$. Given that $0<c_{n}^{i}\leq\alpha$, it is clear that $c_{n}^{i}\left|x_{n}^{i}\right|-c_{n}^{i}\left|x^{i}\right|\rightarrow 0$. Therefore, we have
    \begin{equation*}
        \liminf_{n}\left(c_{n}^{i}\left|x_{n}^{i}\right|-c_{n}^{i}\left|x^{i}\right|\right)=0.
    \end{equation*} 
    Hence,
    \begin{equation}\label{equ2.7}
        \sum_{i}\liminf_{n} \left[\left(c_{n}^{i}\left|x_{n}^{i}\right|-c_{n}^{i}\left|x^{i}\right|\right) \left(\left\|x_{n}\right\|_{\ell_{1}}+\left\|x\right\|_{\ell_{1}}\right)\right]=0.
    \end{equation}
    Combining \eqref{equ2.3}, \eqref{equ2.6}, and \eqref{equ2.7}, we conclude that
    \begin{equation*}
        \liminf_{n}\left(\mathcal{R}_{\eta}(x_{n})-\mathcal{R}_{\eta}(x)\right)\geq 0,
    \end{equation*}
    which completes the proof of the lemma.
\end{proof}

\begin{remark}\label{rem2.8}
    It is important to note that Lemma \ref{lem2.7} continues to hold when $\alpha=\beta$, specifically, the expression $\left\|\cdot\right\|_{\ell_{1}}^{2}-\left\|\cdot\right\|_{\ell_{2}}^{2}$ remains weakly lower semi-continuous. In the case where $\alpha=\beta$ and $0\leq c_{n}^{i}\leq\alpha$, the above proof remains valid.
\end{remark}

\begin{lemma}\label{lem2.9}
    \textbf{(Radon-Riesz property)} Let $M>0$ be specified. Then, for any sequence $x_{n}\in\ell_{2}$ satisfying $\mathcal{R}_{\eta}\left(x_{n}\right)\leq M$, if $x_{n}$ converges weakly to $x\in\ell_{2}$ and $\mathcal{R}_{\eta}\left(x_{n}\right)\rightarrow \mathcal{R}_{\eta}\left(x\right)$, it follows that $x_{n}$ converges strongly to $x\in\ell_{2}$.  
\end{lemma}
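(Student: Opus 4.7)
The plan is to reduce the claim to norm convergence $\|x_n\|_{\ell_2} \to \|x\|_{\ell_2}$, which together with weak convergence in the Hilbert space $\ell_2$ automatically upgrades to strong convergence via $\|x_n-x\|_{\ell_2}^2 = \|x_n\|_{\ell_2}^2 - 2\langle x_n,x\rangle + \|x\|_{\ell_2}^2$. Writing $a_n = \|x_n\|_{\ell_1}$, $b_n = \|x_n\|_{\ell_2}$, $a = \|x\|_{\ell_1}$, $b = \|x\|_{\ell_2}$, I first observe that $\{b_n\}$ is bounded because weakly convergent sequences are norm-bounded, and that $\{a_n\}$ is bounded via $a_n^2 \leq M + \eta b_n^2$, which comes from the hypothesis $\mathcal{R}_\eta(x_n)\leq M$. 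An arbitrary subsequence can therefore be refined to one along which $a_{n_k} \to a'$ and $b_{n_k} \to b'$, and it suffices to prove that necessarily $a' = a$ and $b' = b$; the usual subsequence principle then delivers $b_n \to b$ for the full sequence.

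The argument rests on two scalar relations between $a, a', b, b'$. Weak lower semi-continuity of $\|\cdot\|_{\ell_2}$ and Fatou's lemma applied to the componentwise convergence $x_n^i \to x^i$ give $b' \geq b$ and $a' \geq a$; the hypothesis $\mathcal{R}_\eta(x_n) \to \mathcal{R}_\eta(x)$ then translates to the equality
\begin{equation*}
(a' - a)(a' + a) \;=\; \eta\bigl((b')^2 - b^2\bigr).
\end{equation*}
To obtain a matching opposing inequality I will use tail truncation. Let $P_N$ denote projection onto the first $N$ coordinates and $Q_N = I - P_N$. Since weak convergence in $\ell_2$ is componentwise, $P_N x_{n_k} \to P_N x$ strongly, so both $\|P_N x_{n_k}\|_{\ell_1} \to \|P_N x\|_{\ell_1}$ and $\|P_N x_{n_k}\|_{\ell_2}^2 \to \|P_N x\|_{\ell_2}^2$. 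Combining the elementary bound $\|Q_N y\|_{\ell_2}^2 \leq \|Q_N y\|_{\ell_1}^2$ with the decompositions $\|x_{n_k}\|_{\ell_j}=\|P_N x_{n_k}\|_{\ell_j}+\|Q_N x_{n_k}\|_{\ell_j}$ (for $j=1$, and the Pythagoras analogue for $j=2$), and passing first $k\to\infty$ and then $N\to\infty$ (using $a,b<\infty$), produces
\begin{equation*}
(b')^2 - b^2 \;\leq\; (a' - a)^2.
\end{equation*}

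Combining the two relations yields $(a'-a)(a'+a) \leq \eta(a'-a)^2$. If $a' > a$, dividing by $a'-a>0$ gives $(1-\eta)a' \leq -(1+\eta)a$, which for $\eta<1$ forces $a'\leq 0$, contradicting $a'>a\geq 0$. Hence $a'=a$, and the earlier equality then forces $b'=b$, completing the proof. The main obstacle I anticipate is the second inequality: nothing in the hypotheses directly controls the excess $(b')^2-b^2$ by $a'-a$, and the only route I see is the truncation plus iterated limits above, exploiting that the $\ell_1$/$\ell_2$ ratio of the tails $Q_N x_{n_k}$ is at least $1$. The argument also makes transparent why the conclusion must assume $\eta<1$: at $\eta=1$ the standard basis $x_n=e_n$ converges weakly to $0$ with $\mathcal{R}_1(e_n)=0=\mathcal{R}_1(0)$, yet $\|x_n\|_{\ell_2}=1$, so the Radon--Riesz property genuinely fails there.
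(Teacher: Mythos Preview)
Your proof is correct, but it takes a genuinely different route from the paper. The paper argues by contradiction using the decomposition
\[
\mathcal{R}_\eta(\cdot)=\bigl(\|\cdot\|_{\ell_1}^2-\|\cdot\|_{\ell_2}^2\bigr)+(1-\eta)\|\cdot\|_{\ell_2}^2=\mathcal{R}_1(\cdot)+(1-\eta)\|\cdot\|_{\ell_2}^2,
\]
and then invokes the weak lower semicontinuity of $\mathcal{R}_1$ (already established in Lemma~\ref{lem2.7} and Remark~\ref{rem2.8}): if $\|x_m\|_{\ell_2}\to c>\|x\|_{\ell_2}$ along some subsequence, the $(1-\eta)\|\cdot\|_{\ell_2}^2$ piece overshoots, forcing $\liminf_m\mathcal{R}_1(x_m)<\mathcal{R}_1(x)$, a contradiction. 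Your argument instead bypasses Lemma~\ref{lem2.7} entirely and derives the sharper tail estimate $(b')^2-b^2\le(a'-a)^2$ directly by truncation $P_N/Q_N$ and the elementary bound $\|Q_N y\|_{\ell_2}\le\|Q_N y\|_{\ell_1}$; this is in fact a stronger inequality than what weak lower semicontinuity of $\mathcal{R}_1$ would give along the subsequence, namely $(b')^2-b^2\le(a'-a)(a'+a)$. The paper's version is shorter and more modular because it reuses prior work; your version is self-contained, makes the tail mechanism explicit, and would stand on its own even without Lemma~\ref{lem2.7}. Both arguments use $\eta<1$ at the same decisive point, and your closing remark on $x_n=e_n$ at $\eta=1$ matches the paper's Remark~\ref{rem2.10}.
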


\begin{proof}
    Given the assumption that $\mathcal{R}_{\eta}(x_{n})\rightarrow \mathcal{R}_{\eta}(x)$, we can write
    \begin{equation*}
        \left\|x_{n}\right\|_{\ell_{1}}^{2}-\eta\left\|x_{n}\right\|_{\ell_{2}}^{2} \rightarrow\left\|x\right\|_{\ell_{1}}^{2}-\eta\left\|x\right\|_{\ell_{2}}^{2}.
    \end{equation*}
    This implies
    \begin{equation}\label{equ2.8}
        \left(\left\|x_{n}\right\|_{\ell_{1}}^{2}-\left\|x_{n}\right\|_{\ell_{2}}^{2} \right)+\left(1-\eta\right)\left\|x_{n}\right\|_{\ell_{2}}^{2} \rightarrow \left(\left\|x\right\|_{\ell_{1}}^{2}-\left\|x\right\|_{\ell_{2}}^{2} \right)+\left(1-\eta\right)\left\|x\right\|_{\ell_{2}}^{2}.
    \end{equation}
    \par Next, we will demonstrate that $\left\|x_{n}\right\|_{\ell_{2}}\rightarrow \left\|x\right\|_{\ell_{2}}$ by means of contradiction. Assume, for the sake of contradiction, that $\left\|x_{n}\right\|_{\ell_{2}}\nrightarrow \left\|x\right\|_{\ell_{2}}$. Since $x_{n}\rightharpoonup x$ in $\ell_{2}$, it follows that $\left\|x\right\|_{\ell_{2}}\leq \liminf_{n}\left\|x_{n}\right\|_{\ell_{2}}$, due to the weak lower semi-continuity of the norm. Consequently, there exists a constant $c>0$ such that $c=\liminf_{n}\left\|x_{n}\right\|_{\ell_{2}}>\left\|x\right\|_{\ell_{2}}$. Therefore, we can find a subsequence $\left\{x_{m}\right\}$ from $\left\{x_{n}\right\}$ such that
    \begin{equation*}
        \lim_{m}\left\|x_{m}\right\|_{\ell_{2}}=c>\left\|x\right\|_{\ell_{2}}.
    \end{equation*}
    This leads to
    \begin{equation}\label{equ2.9}
        \lim_{m}\left(1-\eta\right)\left\|x_{m}\right\|_{\ell_{2}}^{2} = c^{2}\left(1-\eta\right)>\left(1-\eta\right)\left\|x\right\|_{\ell_{2}}^{2}.
    \end{equation}
    From \eqref{equ2.8}, we can infer that
    \begin{equation}\label{equ2.10}
        \left(\left\|x_{m}\right\|_{\ell_{1}}^{2}-\left\|x_{m}\right\|_{\ell_{2}}^{2} \right)+\left(1-\eta\right)\left\|x_{m}\right\|_{\ell_{2}}^{2} \rightarrow \left(\left\|x\right\|_{\ell_{1}}^{2}-\left\|x\right\|_{\ell_{2}}^{2} \right)+\left(1-\eta\right)\left\|x\right\|_{\ell_{2}}^{2}.
    \end{equation}
    Combining \eqref{equ2.9} and \eqref{equ2.10} yields
    \begin{equation*}
        \lim_{m}\left(\left\|x_{m}\right\|_{\ell_{1}}^{2}-\left\|x_{m}\right\|_{\ell_{2}}^{2} \right)<\left\|x\right\|_{\ell_{1}}^{2}-\left\|x\right\|_{\ell_{2}}^{2}.
    \end{equation*}
    Thus, we have
    \begin{equation}\label{equ2.11}
        \liminf_{n}\left(\left\|x_{n}\right\|_{\ell_{1}}^{2}-\left\|x_{n}\right\|_{\ell_{2}}^{2} \right) \leq
        \liminf_{m}\left(\left\|x_{m}\right\|_{\ell_{1}}^{2}-\left\|x_{m}\right\|_{\ell_{2}}^{2} \right)<\left\|x\right\|_{\ell_{1}}^{2}-\left\|x\right\|_{\ell_{2}}^{2}.
    \end{equation}
    This contradicts the established fact that $\left\|\cdot\right\|_{\ell_{1}}^{2}- \left\|\cdot\right\|_{\ell_{2}}^{2}$ is weakly lower semi-continuous (refer to Remark \ref{rem2.8}). This argument demonstrates that $\left\|x_{n}\right\|_{\ell_{2}} \rightarrow\left\|x\right\|_{\ell_{2}}$. Since $x_{n}\rightharpoonup x$ in $\ell_{2}$ by assumption, we conclude that $x_{n}\rightarrow x$ in $\ell_{2}$.
\end{proof}

\begin{remark}\label{rem2.10}
     Let $x_{n}=(\underbrace{0,\cdots,0,1}_{n},0,\cdots)$ and $x=0$. Then $x_{n}\rightharpoonup x$ in $\ell_{2}$. If $\eta=1$, we have
    \begin{equation*}
        \mathcal{R}_{\eta}\left(x_{n}\right)=\left\|x_{n}\right\|_{\ell_{1}}^{2}- \left\|x_{n}\right\|_{\ell_{2}}^{2}=0 \quad and \quad \mathcal{R}_{\eta}\left(x\right)=0.
    \end{equation*}
    So $\mathcal{R}_{\eta}\left(x_{n}\right)\rightarrow \mathcal{R}_{\eta}\left(x\right)$. However, $\left\|x_{n}-x\right\|_{\ell_{2}}=1$, which implies that $x_{n}$ does not converge strongly to $x$. We note that the Radon-Riesz property is invalid if $\eta=1$.
\end{remark}

\section{Properties of regularized solution}\label{sec3}

\par In this section, we present a comprehensive analysis of the well-posedness of the regularization method, encompassing aspects such as existence, stability, convergence, and sparsity of solutions. Given that the well-posedness of regularization methods is a classical result established through the conditions of the coercivity, weak lower semi-continuity, and Radon-Riesz property of the penalty term, we will succinctly state the relevant theorems without delving into extensive detail. For a thorough examination of the proofs, we direct the reader to references \cite{DH19,EHN96}.

\begin{theorem}\label{the3.1}
    \textbf{(Existence)} For all $\alpha>\beta> 0$ and $y^{\delta}\in Y$, problem \eqref{equ1.4} has a solution.
\end{theorem}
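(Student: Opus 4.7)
The plan is to apply the direct method of the calculus of variations, which is the standard route once coercivity, weak lower semi-continuity, and reflexivity are at hand. First I would pick a minimizing sequence $\{x_n\}\subset \ell_2$ for $\mathcal{J}_{\alpha,\beta}^{\delta}$, that is, a sequence with
\begin{equation*}
    \mathcal{J}_{\alpha,\beta}^{\delta}(x_n) \longrightarrow \inf_{x\in\ell_2}\mathcal{J}_{\alpha,\beta}^{\delta}(x) =: J^*.
\end{equation*}
Since the constant sequence $x\equiv 0$ gives $\mathcal{J}_{\alpha,\beta}^{\delta}(0) = \frac{1}{q}\|y^\delta\|_Y^q < \infty$, we have $J^* < \infty$, so there exists $M>0$ with $\mathcal{J}_{\alpha,\beta}^{\delta}(x_n)\leq M$ for all sufficiently large $n$. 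Because the fidelity term $\frac{1}{q}\|Ax_n-y^\delta\|_Y^q$ is non-negative, this forces $\mathcal{R}_{\alpha,\beta}(x_n) = \alpha\mathcal{R}_\eta(x_n)\leq M$.

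Next I would invoke coercivity. Because $\alpha>\beta>0$, we have $\eta = \beta/\alpha < 1$, so Lemma \ref{lem2.5} applies and tells us that the sublevel set $\{x : \mathcal{R}_\eta(x)\leq M/\alpha\}$ is bounded in $\ell_2$. Hence $\{x_n\}$ is bounded in $\ell_2$, and since $\ell_2$ is a reflexive Hilbert space, the Eberlein--\v{S}mulian theorem yields a subsequence (still denoted $\{x_n\}$) and a limit $x^*\in\ell_2$ with $x_n \weak x^*$.

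Finally I would pass to the limit. The boundedness of $A$ implies $Ax_n \weak Ax^*$ in $Y$, and therefore $Ax_n - y^\delta \weak Ax^* - y^\delta$. By weak lower semi-continuity of $\|\cdot\|_Y^q$ (for $1\leq q\leq 2$ on a Hilbert space),
\begin{equation*}
    \frac{1}{q}\|Ax^* - y^\delta\|_Y^q \leq \liminf_n \frac{1}{q}\|Ax_n - y^\delta\|_Y^q.
\end{equation*}
Combining this with Lemma \ref{lem2.7} applied to $\{x_n\}$ (whose regularization values are bounded by $M/\alpha$, so the hypothesis is met) gives
\begin{equation*}
    \mathcal{J}_{\alpha,\beta}^{\delta}(x^*) \leq \liminf_n \mathcal{J}_{\alpha,\beta}^{\delta}(x_n) = J^*,
\end{equation*}
and since $J^*$ is the infimum, $x^*$ is a minimizer. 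The main obstacle is conceptual rather than technical: one must verify that the hypothesis $\alpha>\beta$ is precisely what makes $\mathcal{R}_\eta$ coercive (see the remark following Lemma \ref{lem2.5} where $\eta=1$ fails), so the boundedness step of the minimizing sequence genuinely relies on the strict inequality given in the statement. The rest is an assembly of the preliminaries established in Section \ref{sec2}.
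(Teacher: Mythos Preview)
Your proposal is correct and follows precisely the classical route the paper has in mind: the paper itself omits the proof of Theorem \ref{the3.1}, stating only that existence (along with stability and convergence) follows from the coercivity and weak lower semi-continuity established in Section \ref{sec2}, and directing the reader to \cite{DH19,EHN96} for details. Your direct-method argument is exactly the standard filling-in of that gap, and you correctly flag that the strict inequality $\alpha>\beta$ is what activates Lemma \ref{lem2.5}.
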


\begin{theorem}\label{the3.2}
    \textbf{(Stability)} Let $\alpha>\beta> 0$, and let $\left\{y_{n}\right\}$ be a sequence with $\lim_{n\rightarrow +\infty}\left\|y_{n}-y^{\delta}\right\|=0$. Furthermore, let $\left\{x_{n}\right\}$ be a sequence such that $x_{n}$ is a minimizer of $\mathcal{J}_{\alpha_{n},\beta_{n}}^{\delta_{n}}\left(x\right)$, with $\alpha_{n}>\beta_{n}\geq 0$ and $\alpha_{n}\rightarrow \alpha$, $\beta_{n}\rightarrow \beta$ as $n\rightarrow +\infty$. Then, the sequence $\left\{x_{n}\right\}$ contains a convergent subsequence $\left\{x_{n_{k}}\right\}$, and the limit $x_{\alpha,\beta}^{\delta}$ of every convergent subsequence is a minimizer of $\mathcal{J}_{\alpha,\beta}^{\delta}(x)$. If the minimizer of $\mathcal{J}_{\alpha,\beta}^{\delta}(x)$ is unique, then it follows that $\lim_{n\rightarrow +\infty}\left\|x_{n}- x_{\alpha,\beta}^{\delta}\right\|_{\ell_{2}}=0$.
\end{theorem}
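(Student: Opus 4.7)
The plan is to follow the classical three-step template for stability of Tikhonov-type regularization with a non-convex penalty, carefully adapted to the fact that the parameters $\alpha_{n}$, $\beta_{n}$, $\eta_{n}:=\beta_{n}/\alpha_{n}$, and the data $y_{n}$ all vary simultaneously. The three steps are: (i) uniform boundedness of $\{x_{n}\}$ in $\ell_{2}$, (ii) extraction of a weakly convergent subsequence whose weak limit is a minimizer of $\mathcal{J}_{\alpha,\beta}^{\delta}$, and (iii) upgrading that weak convergence to strong convergence via the Radon-Riesz property.

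For step (i), I would test the minimality of $x_{n}$ against the trivial competitor $x=0$. Since $\mathcal{J}_{\alpha_{n},\beta_{n}}^{\delta_{n}}(0)=\frac{1}{q}\|y_{n}\|_{Y}^{q}$, dropping the nonnegative fidelity on the left gives
\begin{equation*}
\alpha_{n}(1-\eta_{n})\|x_{n}\|_{\ell_{2}}^{2} \leq \alpha_{n}\mathcal{R}_{\eta_{n}}(x_{n}) \leq \tfrac{1}{q}\|y_{n}\|_{Y}^{q},
\end{equation*}
where the lower bound $\mathcal{R}_{\eta_{n}}(x_{n})\geq (1-\eta_{n})\|x_{n}\|_{\ell_{2}}^{2}$ is the one derived in the proof of Lemma \ref{lem2.5}. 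Because $\{y_{n}\}$ is bounded, $\alpha_{n}\to\alpha>0$, and $\eta_{n}\to\eta=\beta/\alpha<1$ (using the standing assumption $\alpha>\beta$), the coefficient $\alpha_{n}(1-\eta_{n})$ stays bounded away from zero for all sufficiently large $n$, so $\|x_{n}\|_{\ell_{2}}$ is uniformly bounded; the same inequality then also bounds $\|x_{n}\|_{\ell_{1}}$.

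For step (ii), weak sequential compactness in the Hilbert space $\ell_{2}$ supplies a subsequence $x_{n_{k}}\rightharpoonup\tilde{x}$. For any fixed competitor $x\in\ell_{2}$, the minimality of $x_{n_{k}}$ gives
\begin{equation*}
\mathcal{J}_{\alpha_{n_{k}},\beta_{n_{k}}}^{\delta_{n_{k}}}(x_{n_{k}}) \leq \mathcal{J}_{\alpha_{n_{k}},\beta_{n_{k}}}^{\delta_{n_{k}}}(x),
\end{equation*}
and the right-hand side converges to $\mathcal{J}_{\alpha,\beta}^{\delta}(x)$ by continuity of all ingredients in the parameters and in the data. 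To pass to the limit on the left, the fidelity part is controlled by weak continuity of $A$ (so that $Ax_{n_{k}}-y_{n_{k}}\rightharpoonup A\tilde{x}-y^{\delta}$) combined with the weak lower semi-continuity of $\|\cdot\|_{Y}^{q}$. For the non-convex penalty I would use the parameter-shift identity
\begin{equation*}
\alpha_{n_{k}}\mathcal{R}_{\eta_{n_{k}}}(x_{n_{k}}) = \alpha_{n_{k}}\mathcal{R}_{\eta}(x_{n_{k}}) + \alpha_{n_{k}}(\eta-\eta_{n_{k}})\|x_{n_{k}}\|_{\ell_{2}}^{2},
\end{equation*}
in which the correction term vanishes in the limit by step (i) and $\eta_{n_{k}}\to\eta$, while Lemma \ref{lem2.7} applied to the first summand (at the \emph{fixed} parameter $\eta$) gives $\liminf_{k}\alpha_{n_{k}}\mathcal{R}_{\eta}(x_{n_{k}})\geq \alpha\mathcal{R}_{\eta}(\tilde{x})$. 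Adding the two lower bounds shows $\mathcal{J}_{\alpha,\beta}^{\delta}(\tilde{x})\leq\mathcal{J}_{\alpha,\beta}^{\delta}(x)$ for every $x$, so $\tilde{x}$ is a minimizer.

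For step (iii), I would test the above inequality against $x=\tilde{x}$ itself; this forces $\lim_{k}\mathcal{J}_{\alpha_{n_{k}},\beta_{n_{k}}}^{\delta_{n_{k}}}(x_{n_{k}})=\mathcal{J}_{\alpha,\beta}^{\delta}(\tilde{x})$. Passing to a further subsequence along which the fidelity and penalty pieces each converge (possible by boundedness) and comparing with the two weak lower bounds already established, both inequalities must be equalities. In particular $\mathcal{R}_{\eta}(x_{n_{k}})\to\mathcal{R}_{\eta}(\tilde{x})$, so Lemma \ref{lem2.9} upgrades $x_{n_{k}}\rightharpoonup\tilde{x}$ to $x_{n_{k}}\to\tilde{x}$ in $\ell_{2}$. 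The uniqueness clause then follows from the standard subsequence principle: if every subsequence admits a further subsequence converging to the same unique minimizer, the whole sequence converges. The main technical hurdle is the simultaneous presence of the varying $\eta_{n_{k}}$ and the sign-changing term $-\beta_{n}\|x_{n}\|_{\ell_{2}}^{2}$, which is not lower semi-continuous on its own; the parameter-shift identity above, combined with Lemma \ref{lem2.7} applied at the fixed $\eta$, is what allows the argument to go through.
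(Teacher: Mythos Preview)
Your proposal is correct and follows precisely the classical three-step template (coercivity $\Rightarrow$ boundedness, weak lower semi-continuity $\Rightarrow$ minimality of the weak limit, Radon--Riesz $\Rightarrow$ strong convergence) that the paper itself invokes by citing \cite{DH19,EHN96} in lieu of a written-out proof. Your handling of the simultaneously varying $\alpha_{n},\beta_{n},\eta_{n}$ via the parameter-shift identity is the right device to reduce to Lemma~\ref{lem2.7} at the fixed $\eta$, and the equality-forcing argument in step~(iii) can in fact be done directly on the whole subsequence (since $a_{n}+b_{n}\to A+B$ with $\liminf a_{n}\ge A$ and $\liminf b_{n}\ge B$ already forces $a_{n}\to A$, $b_{n}\to B$), but your further-subsequence version is equally valid.
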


\begin{theorem}\label{the3.3}
    \textbf{(Convergence)} Let $x_{\alpha_{n},\beta_{n}}^{\delta_{n}}$ be a minimizer of $\mathcal{J}_{\alpha_{n},\beta_{n}}^{\delta_{n}}\left(x\right)$ defined by \eqref{equ2.1} with the data $y^{\delta_{n}}$ satisfying $\left\|y-y^{\delta_{n}}\right\|\leq\delta_{n}$, where $\delta_{n}\rightarrow 0$ as $n\rightarrow +\infty$ and $y^{\delta_{n}}$ resides in the range of $A$. Assume $\alpha_{n}:=\alpha\left(\delta_{n}\right)$, $\beta_{n}:=\beta\left(\delta_{n}\right)$, where $\alpha_{n}>\beta_{n}> 0$, and the following conditions hold
    \begin{equation*}
        \lim_{n\rightarrow +\infty}\alpha_{n}=0,\ \lim_{n\rightarrow +\infty}\beta_{n}=0\ and\ \lim_{n\rightarrow +\infty}\frac{\delta_{n}^{p}}{\alpha_{n}}=0.
    \end{equation*}
    Additionally, assume $\eta=\lim_{n\rightarrow +\infty}\eta_{n}\in\left[0,1\right)$ exists, where $\eta_{n}=\beta_{n}/\alpha_{n}$. Then there exists a subsequence of $\left\{x_{\alpha_{n},\beta_{n}}^{\delta_{n}}\right\}$, denoted by $\left\{x_{\alpha_{n_{k}},\beta_{n_{k}}}^{\delta_{n_{k}}}\right\}$, such that $x_{\alpha_{n},\beta_{n}}^{\delta_{n}}$ converges to an $\mathcal{R}_{\eta}$-minimizing solution $x^{\dagger}\in\ell_{2}$. Furthermore, if the $\mathcal{R}_{\eta}$-minimizing solution $x^{\dagger}$ is unique, then
    \begin{equation*}
        \lim_{n\rightarrow +\infty}\left\|x_{\alpha_{n},\beta_{n}}^{\delta_{n}}- x^{\dagger}\right\|_{\ell_{2}}=0.
    \end{equation*}
\end{theorem}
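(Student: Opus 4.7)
The plan is to follow the classical three-stage convergence argument for regularization methods (boundedness, weak accumulation, identification of the limit), but carefully track the fact that the regularization parameter $\eta_n=\beta_n/\alpha_n$ also changes with $n$, and use the three structural properties of $\mathcal{R}_\eta$ established in Section \ref{sec2}. First, starting from the minimization property $\mathcal{J}_{\alpha_n,\beta_n}^{\delta_n}(x_{\alpha_n,\beta_n}^{\delta_n})\leq\mathcal{J}_{\alpha_n,\beta_n}^{\delta_n}(x^\dagger)$ and the bound $\|Ax^\dagger-y^{\delta_n}\|_Y\leq\delta_n$, I would divide by $\alpha_n$ and rearrange to obtain
\begin{equation*}
\frac{1}{q\alpha_n}\|Ax_{\alpha_n,\beta_n}^{\delta_n}-y^{\delta_n}\|_Y^q+\mathcal{R}_{\eta_n}(x_{\alpha_n,\beta_n}^{\delta_n})\leq\frac{\delta_n^q}{q\alpha_n}+\mathcal{R}_{\eta_n}(x^\dagger).
\end{equation*}
Since $\eta_n\to\eta$ and $\delta_n^p/\alpha_n\to 0$ (reading $p=q$ as in the functional), the right-hand side stays bounded, which uniformly bounds $\mathcal{R}_{\eta_n}(x_{\alpha_n,\beta_n}^{\delta_n})$; since $\eta<1$, we can assume $\eta_n\leq\eta'<1$ eventually, so the coercivity estimate of Lemma \ref{lem2.5}, $\mathcal{R}_{\eta_n}(x)\geq(1-\eta_n)\|x\|_{\ell_2}^2$, yields $\ell_2$-boundedness of $\{x_{\alpha_n,\beta_n}^{\delta_n}\}$.

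Next, I would extract a weakly convergent subsequence $x_{\alpha_{n_k},\beta_{n_k}}^{\delta_{n_k}}\rightharpoonup\tilde x$ in $\ell_2$. The same inequality gives $\|Ax_{\alpha_n,\beta_n}^{\delta_n}-y^{\delta_n}\|_Y^q\leq\delta_n^q+q\alpha_n\mathcal{R}_{\eta_n}(x^\dagger)\to 0$, and combined with $\|y^{\delta_n}-y\|_Y\leq\delta_n\to 0$ and the weak continuity of the bounded linear operator $A$, this forces $A\tilde x=y$, so $\tilde x$ is admissible. To identify $\tilde x$ as $\mathcal{R}_\eta$-minimizing, I would compare $\mathcal{R}_{\eta_n}$ to $\mathcal{R}_\eta$ via
\begin{equation*}
|\mathcal{R}_\eta(x)-\mathcal{R}_{\eta_n}(x)|=|\eta_n-\eta|\,\|x\|_{\ell_2}^2,
\end{equation*}
which vanishes uniformly on the bounded sequence. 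Combined with the weak lower semi-continuity from Lemma \ref{lem2.7} and the above comparison inequality, this gives
\begin{equation*}
\mathcal{R}_\eta(\tilde x)\leq\liminf_k\mathcal{R}_\eta(x_{\alpha_{n_k},\beta_{n_k}}^{\delta_{n_k}})=\liminf_k\mathcal{R}_{\eta_{n_k}}(x_{\alpha_{n_k},\beta_{n_k}}^{\delta_{n_k}})\leq\mathcal{R}_\eta(x^\dagger),
\end{equation*}
so $\tilde x$ realizes the minimum and is an $\mathcal{R}_\eta$-minimizing solution.

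For strong convergence, I would observe that the chain of inequalities in the previous step forces both $\mathcal{R}_\eta(\tilde x)=\mathcal{R}_\eta(x^\dagger)$ and $\mathcal{R}_\eta(x_{\alpha_{n_k},\beta_{n_k}}^{\delta_{n_k}})\to\mathcal{R}_\eta(\tilde x)$ (since $\liminf$ and $\limsup$ coincide); then the Radon–Riesz property of Lemma \ref{lem2.9} upgrades weak convergence to strong convergence $x_{\alpha_{n_k},\beta_{n_k}}^{\delta_{n_k}}\to\tilde x$ in $\ell_2$. The uniqueness case is then a standard subsequence-of-subsequence argument: if $x^\dagger$ is the unique $\mathcal{R}_\eta$-minimizing solution, then every subsequence of $\{x_{\alpha_n,\beta_n}^{\delta_n}\}$ admits a further subsequence converging strongly to the same limit $x^\dagger$, forcing $\lim_n\|x_{\alpha_n,\beta_n}^{\delta_n}-x^\dagger\|_{\ell_2}=0$.

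The main obstacle I anticipate is the interplay between the varying parameter $\eta_n$ in the functional and the fixed limit parameter $\eta$ appearing in the target notion of $\mathcal{R}_\eta$-minimizing solution; the coercivity estimate of Lemma \ref{lem2.5} degenerates as $\eta_n\to 1$, so the assumption $\eta\in[0,1)$ is essential and must be exploited at precisely the step where uniform $\ell_2$-boundedness is derived. The bounded-sequence comparison $|\mathcal{R}_\eta-\mathcal{R}_{\eta_n}|=O(|\eta_n-\eta|)$ on the relevant iterates is the technical device that bridges the two functionals, and its validity depends crucially on the $\ell_2$-boundedness established earlier, creating a small circular-looking dependence that the proof must unravel in the correct order.
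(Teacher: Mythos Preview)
Your proposal is correct and follows exactly the classical three-step argument (coercivity $\Rightarrow$ boundedness, weak lower semi-continuity $\Rightarrow$ identification of the limit, Radon--Riesz $\Rightarrow$ strong convergence) that the paper invokes by referring to \cite{DH19,EHN96} rather than spelling out; the paper omits the detailed proof. The only minor presentational point is that in the comparison step you should let $x^{\dagger}$ range over \emph{all} solutions of $Ax=y$ with finite $\mathcal{R}_{\eta}$-value (not just a single one fixed in advance), so that the inequality $\mathcal{R}_{\eta}(\tilde x)\leq\mathcal{R}_{\eta}(x^{\dagger})$ really certifies $\tilde x$ as an $\mathcal{R}_{\eta}$-minimizing solution.
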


\begin{remark}   
    Note that we require $\alpha>\beta$ in the above theorems. In instances where $\alpha=\beta$, we need to require $\mathcal{J}_{\alpha,\beta}\left(x\right)$ is coercivity. Additionally, in the absence of the Radon-Riesz property, we can only establish weak convergence.
\end{remark}

\begin{proposition}\label{pro2.13}
    \textbf{(Sparsity)} Every minimizer $x$ of $\mathcal{J}_{\alpha,\beta}^{\delta}$ in \eqref{equ1.4} is sparse when $q=2$.
\end{proposition}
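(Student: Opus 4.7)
The plan is to exploit a first-order optimality condition at every nonzero coordinate of a minimizer $x$ and then observe that the resulting equation cannot be satisfied by infinitely many indices, because the relevant vector lies in $\ell_{2}$ and hence has components decaying to $0$.

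First I would dispose of the trivial case: if $x=0$, then $x$ is $0$-sparse. Otherwise, since $\mathcal{J}_{\alpha,\beta}^{\delta}(0)=\tfrac{1}{2}\|y^{\delta}\|_{Y}^{2}<\infty$, any minimizer satisfies $\mathcal{J}_{\alpha,\beta}^{\delta}(x)<\infty$, forcing $\alpha\|x\|_{\ell_{1}}^{2}-\beta\|x\|_{\ell_{2}}^{2}<\infty$. Combined with $x\in\ell_{2}$, this gives $\|x\|_{\ell_{1}}<\infty$ (for $\alpha>\beta$ the bound $(1-\eta)\|x\|_{\ell_{1}}^{2}\leq\|x\|_{\ell_{1}}^{2}-\eta\|x\|_{\ell_{2}}^{2}$ from Lemma~\ref{lem2.5} suffices; for $\alpha=\beta$ one uses $\|x\|_{\ell_{1}}^{2}-\|x\|_{\ell_{2}}^{2}<\infty$ together with $\|x\|_{\ell_{2}}<\infty$). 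Hence $x\in\ell_{1}$.

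Next, for any index $i$ with $x_{i}\neq 0$ I would consider the scalar function $\varphi_{i}(t):=\mathcal{J}_{\alpha,\beta}^{\delta}(x+te_{i})$. For $|t|$ sufficiently small, $|x_{i}+t|=|x_{i}|+t\,{\rm sign}(x_{i})$, so $\|x+te_{i}\|_{\ell_{1}}=\|x\|_{\ell_{1}}+t\,{\rm sign}(x_{i})$, and with $q=2$ the map $\varphi_{i}$ is smooth near $t=0$. Since $x$ is a minimizer, $\varphi_{i}'(0)=0$, which yields
\begin{equation*}
    \bigl(A^{*}(Ax-y^{\delta})\bigr)_{i}+2\alpha\|x\|_{\ell_{1}}\,{\rm sign}(x_{i})-2\beta x_{i}=0,
\end{equation*}
or equivalently
\begin{equation*}
    \bigl|\bigl(A^{*}(Ax-y^{\delta})\bigr)_{i}-2\beta x_{i}\bigr|=2\alpha\|x\|_{\ell_{1}}\quad\text{for every }i\in{\rm supp}(x).
\end{equation*}

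Finally, $A^{*}(Ax-y^{\delta})-2\beta x\in\ell_{2}$, so its $i$th component tends to $0$ as $i\to\infty$, whereas $2\alpha\|x\|_{\ell_{1}}>0$ is a fixed positive constant because $x\neq 0$. Consequently only finitely many indices $i$ can satisfy the optimality equation, so ${\rm supp}(x)$ is finite and $x$ is sparse. The main obstacle is the legitimacy of the one-sided differentiation in the $\|\cdot\|_{\ell_{1}}^{2}$ term: one must verify that the minimizer lies in $\ell_{1}$ (handled in the first step) and that coordinate perturbations within $\ell_{2}$ suffice to extract the optimality condition. Once these points are settled, the $\ell_{2}$-summability of $A^{*}(Ax-y^{\delta})-2\beta x$ delivers sparsity immediately, without any additional source or smallness assumption.
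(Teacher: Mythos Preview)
Your argument is correct and cleaner than the paper's. The key difference is the perturbation you choose: you differentiate $t\mapsto\mathcal{J}_{\alpha,\beta}^{\delta}(x+te_{i})$ at $t=0$ for $i\in{\rm supp}(x)$, obtaining the \emph{equation}
\[
\bigl|(A^{*}(Ax-y^{\delta}))_{i}-2\beta x_{i}\bigr|=2\alpha\|x\|_{\ell_{1}},
\]
which immediately contradicts $A^{*}(Ax-y^{\delta})-2\beta x\in\ell_{2}$ on an infinite support. The paper instead follows Grasmair's scheme and compares $x$ with the finite perturbation $\overline{x}=x-x_{i}e_{i}$ (zeroing out coordinate $i$), extracting only an \emph{inequality}; this forces them to introduce an auxiliary quantity $K_{i}$, bound $\mathcal{R}_{\alpha,\beta}(x)-\mathcal{R}_{\alpha,\beta}(\overline{x})$ from below by $\alpha c\,|x_{i}|/(1+|x_{i}|)\cdot(\|x\|_{\ell_{1}}+\|\overline{x}\|_{\ell_{1}})$, and then argue $K_{i}\to 0$. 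Your first-order approach bypasses that bookkeeping entirely. The paper's route has the minor advantage of never needing differentiability of $\|\cdot\|_{\ell_{1}}$ (it only uses a comparison), but since you restrict to coordinates where $x_{i}\neq 0$ this is not an issue, and your preliminary verification that $x\in\ell_{1}$ makes the argument watertight. One small remark: the inequality $(1-\eta)\|x\|_{\ell_{1}}^{2}\le\|x\|_{\ell_{1}}^{2}-\eta\|x\|_{\ell_{2}}^{2}$ is not literally the content of Lemma~\ref{lem2.5} (which bounds below by $(1-\eta)\|x\|_{\ell_{2}}^{2}$), but it follows immediately from $\|x\|_{\ell_{2}}\le\|x\|_{\ell_{1}}$, so the step is fine.
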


\begin{proof}
    The proof parallels the approach taken in Proposition 4.5 of \cite{G10}. We define the sequence $\overline{x}:=x-x_{i}e_{i}$ for $i\in\mathbb{N}$, where $e_{i}=(\underbrace{0,\cdots,0,1}_{i},0,\cdots)$ and $x_{i}$ is the $i$th component of $x$. Due to the minimizing property of $x$, we have
    \begin{equation}\label{equ3.1}
        \frac{1}{2}\left\|Ax-y^{\delta}\right\|_{Y}^{2}+\mathcal{R}_{\alpha,\beta}\left(x\right) \leq \frac{1}{2}\left\|A\left(x-x_{i}e_{i}\right)-y^{\delta}\right\|_{Y}^{2}+ \mathcal{R}_{\alpha,\beta}\left(x-x_{i}e_{i}\right).
    \end{equation} 
    If $x=0$, then $x$ is sparse. If $x\neq 0$, from \eqref{equ3.1}, we derive
    \begin{align}\label{equ3.2}
        \left[\alpha\left|x_{i}\right|-\beta\frac{\left|x_{i}\right|^{2}}{\left\|x\right\|_{\ell_{1}} +\left\|\overline{x}\right\|_{\ell_{1}}}\right]\left(\left\|x\right\|_{\ell_{1}} +\left\|\overline{x}\right\|_{\ell_{1}}\right)
        & = \mathcal{R}_{\alpha,\beta}\left(x\right)-\mathcal{R}_{\alpha,\beta}\left(\overline{x}\right) \nonumber\\
        & \leq \frac{1}{2}x_{i}^{2}\left\|Ae_{i}\right\|_{Y}^{2}-x_{i}\left<Ae_{i},Ax-y^{\delta}\right> \nonumber\\
        & \leq \frac{1}{2}x_{i}^{2}\left\|A\right\|_{Y}^{2}-x_{i}\left<e_{i},A^{*}\left(Ax-y^{\delta} \right)\right>
    \end{align}
    for every $i\in\mathbb{N}$. Meanwhile, for any constant $0<c\leq 1-\frac{\beta}{\alpha}$, we can show
    \begin{equation}\label{equ3.3}
        \alpha c\frac{\left|x_{i}\right|}{1+\left|x_{i}\right|}\leq \alpha\left|x_{i}\right|-\beta\left|x_{i}\right|\leq \alpha\left|x_{i}\right|-\beta\frac{\left|x_{i}\right|^{2}}{\left\|x\right\|_{\ell_{1}} +\left\|\overline{x}\right\|_{\ell_{1}}}.
    \end{equation}
    Denote
    \begin{equation*}
        K_{i}:=\frac{\left(1+\left\|x\right\|_{\ell_{1}}\right)\left( \frac{1}{2}x_{i}\left\|A\right\|_{Y}^{2}-\left<e_{i},A^{*}\left(Ax-y^{\delta} \right)\right>\right)}{\alpha c\left(\left\|x\right\|_{\ell_{1}}+ \left\|\overline{x}\right\|_{\ell_{1}}\right)}.
    \end{equation*}
    Combining \eqref{equ3.2} and \eqref{equ3.3} leads us to
    \begin{equation*}
        K_{i}x_{i}\geq \left|x_{i}\right|,\ i\in\mathbb{N}.
    \end{equation*}
    Since $x\in\ell_{2}$, it follows that $x_{i}\rightarrow 0$ as $i\rightarrow \infty$. Additionally, $\left\|A\right\|$ is finite due to $A$ being linear and bounded. Notably, $\left<e_{i}, A^{*}\left(Ax-y^{\delta}\right)\right>= \left(A^{*}\left(Ax-y^{\delta}\right)\right)_{i}$, where $\left(A^{*}\left(Ax-y^{\delta}\right)\right)_{i}$ is the $i$th component of $A^{*}\left(Ax-y^{\delta}\right)$. Since $A^{*}\left(Ax-y^{\delta}\right)\in\ell_{2}$, we have  $\left(A^{*}\left(Ax-y^{\delta}\right)\right)_{i}\rightarrow 0$ as $i\rightarrow \infty$. Thus, $K_{i}\rightarrow 0$ as $i\rightarrow \infty$, which implies that $\Lambda:=\left\{i\in\mathbb{N}\ \mid\ \left|K_{i}\right|\geq 1\right\}$ is finite.\ Consequently, $x_{i}=0$ for $i\notin\Lambda$, proving the sparsity of $x$.
\end{proof}

\par Subsequently, we present the results regarding the convergence rates of both a priori and a posteriori parameter choice rules. We derive an inequality under a source condition, which leads us to the convergence rate $\mathcal{O}\left(\delta\right)$ in the $\ell_{2}$-norm based on this inequality. The source condition is outlined below. 

\begin{assumption}\label{ass3.5}
    Let $x^{\dagger}\neq 0$ be a sparse $\mathcal{R}_{\eta}$-minimizing solution to the problem $Ax=y$. We assume that
    \begin{equation*}
        e_{i}\in\mathcal{R}\left(A^{*}\right)\ \forall i\in\mathcal{I}(x^{\dagger}),
    \end{equation*}
    where $e_{i}=(\underbrace{0,\cdots,0,1}_{i},0,\cdots)$ and $\mathcal{I}\left(x^{\dagger}\right)$ is defined in Definition \ref{def2.3}. In other words, for each $i\in\mathcal{I}\left(x^{\dagger}\right)$, there exists an element $\omega_{i}\in\mathcal{D}\left(A^{*}\right)$ such that $e_{i}=A^{*}\omega_{i}$.
\end{assumption}
\par Assumption \ref{ass3.5} and its modified version were introduced in \cite{BL09,G09}. This assumption acts as a source condition and implies that the operator $A$ satisfies a form of the "finite basis injectivity condition", which is frequently utilized in sparsity regularization.
\par Next, we will present an inequality that follows from this source condition. The linear convergence rate $\mathcal{O}\left(\delta\right)$ can be derived from this inequality.

\begin{lemma}\label{lem3.6}
    Assume that Assumption \ref{ass3.5} holds and that $\mathcal{R}_{\alpha,\beta}\left(x\right)\leq M$ $\left(\alpha>\beta>0\right)$for some constant $M>0$. Then there exist constants $c_{1}>c_{2}$ with $c_{1}>0$ and $c_{3}>0$ such that
    \begin{equation}\label{equ3.4}
        c_{3}\left(\alpha-\beta\right)\left\|x-x^{\dagger}\right\|_{\ell_{1}} \leq\mathcal{R}_{\alpha,\beta}\left(x\right)-\mathcal{R}_{\alpha,\beta}\left(x^{\dagger}\right) +\left(c_{1}\alpha-c_{2}\beta\right)\|Ax-Ax^{\dagger}\|_{Y}
    \end{equation}
\end{lemma}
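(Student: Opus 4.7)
The plan is to decompose $\mathcal{R}_{\alpha,\beta}(x)-\mathcal{R}_{\alpha,\beta}(x^\dagger)=\alpha\Delta_1-\beta\Delta_2$, with $\Delta_1:=\|x\|_{\ell_1}^2-\|x^\dagger\|_{\ell_1}^2$ and $\Delta_2:=\|x\|_{\ell_2}^2-\|x^\dagger\|_{\ell_2}^2$, and to estimate both pieces so that a common prefactor $D:=\|x\|_{\ell_1}+\|x^\dagger\|_{\ell_1}$ appears in each bound. Throughout I write $T:=\|x-x^\dagger\|_{\ell_1}$ and $V:=\|A(x-x^\dagger)\|_Y$.

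For $\Delta_1$ I will exploit the sparsity of $x^\dagger$ (Definition \ref{def2.3}): splitting the sum $\|x\|_{\ell_1}-\|x^\dagger\|_{\ell_1}=\sum_i(|x_i|-|x_i^\dagger|)$ according to $\mathcal{I}(x^\dagger)$ and using the reverse triangle inequality on the support yields $\|x\|_{\ell_1}-\|x^\dagger\|_{\ell_1}\geq T-2\sum_{i\in\mathcal{I}(x^\dagger)}|x_i-x_i^\dagger|$. Assumption \ref{ass3.5} then gives $|x_i-x_i^\dagger|=|\langle\omega_i,A(x-x^\dagger)\rangle|\leq\|\omega_i\|_Y V$, so the support sum is at most $C'V$ with $C':=\sum_{i\in\mathcal{I}(x^\dagger)}\|\omega_i\|_Y$. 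Multiplying the scalar inequality $\|x\|_{\ell_1}-\|x^\dagger\|_{\ell_1}\geq T-2C'V$ by the nonnegative quantity $D$ then delivers $\Delta_1\geq D(T-2C'V)$. For $\Delta_2$ I will use the factorization $\Delta_2=(\|x\|_{\ell_2}+\|x^\dagger\|_{\ell_2})(\|x\|_{\ell_2}-\|x^\dagger\|_{\ell_2})$: when $\|x\|_{\ell_2}\geq\|x^\dagger\|_{\ell_2}$, the bounds $\|x\|_{\ell_2}+\|x^\dagger\|_{\ell_2}\leq D$ and $\|x\|_{\ell_2}-\|x^\dagger\|_{\ell_2}\leq\|x-x^\dagger\|_{\ell_2}\leq T$ give $\Delta_2\leq DT$, while otherwise $\Delta_2<0\leq DT$ is trivial.

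Assembling these two estimates gives $\mathcal{R}_{\alpha,\beta}(x)-\mathcal{R}_{\alpha,\beta}(x^\dagger)\geq\alpha D(T-2C'V)-\beta DT=D(\alpha-\beta)T-2\alpha DC'V$. The hypothesis $\mathcal{R}_{\alpha,\beta}(x)\leq M$, combined with $(\alpha-\beta)\|x\|_{\ell_1}^2\leq\alpha\|x\|_{\ell_1}^2-\beta\|x\|_{\ell_2}^2=\mathcal{R}_{\alpha,\beta}(x)$, provides the two-sided bound $\|x^\dagger\|_{\ell_1}\leq D\leq M_1:=\sqrt{M/(\alpha-\beta)}+\|x^\dagger\|_{\ell_1}$; applying $D\geq\|x^\dagger\|_{\ell_1}$ to the nonnegative term $(\alpha-\beta)DT$ and $D\leq M_1$ to the nonnegative term $2\alpha DC'V$ produces the claim with $c_3:=\|x^\dagger\|_{\ell_1}>0$, $c_1:=2M_1C'>0$, and $c_2:=0$, so that $c_1>c_2$ holds automatically (the more refined bound $\Delta_2\leq DT+2CV$ obtained by handling $\langle x-x^\dagger,x^\dagger\rangle$ via the same source condition would instead give $c_2=-2C$, but this refinement is not needed). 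The main obstacle is arranging the bounds on $\Delta_1$ and $\Delta_2$ so that the $T$-coefficient in $\alpha\Delta_1-\beta\Delta_2$ factors cleanly as $(\alpha-\beta)$ times a single quantity rather than as $\alpha A-\beta B$ with $A\ne B$; this is precisely what forces the common factor $D$ in both bounds, and it is only at the very last step that $\mathcal{R}_{\alpha,\beta}(x)\leq M$ is invoked to convert $D$ into honest constants.
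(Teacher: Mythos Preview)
Your proof is correct. Both your argument and the paper's rest on the same three ingredients: the sparsity of $x^\dagger$ to split off the support $\mathcal{I}(x^\dagger)$, Assumption~\ref{ass3.5} to control $|x_i-x_i^\dagger|$ on that support by $\|A(x-x^\dagger)\|_Y$, and the bound $\mathcal{R}_{\alpha,\beta}(x)\leq M$ to make $D=\|x\|_{\ell_1}+\|x^\dagger\|_{\ell_1}$ uniformly bounded. The organizational difference is that you handle $\Delta_1=\|x\|_{\ell_1}^2-\|x^\dagger\|_{\ell_1}^2$ and $\Delta_2=\|x\|_{\ell_2}^2-\|x^\dagger\|_{\ell_2}^2$ separately, bounding $\Delta_2\leq DT$ via the global inequalities $\|\cdot\|_{\ell_2}\leq\|\cdot\|_{\ell_1}$ and $\bigl|\|x\|_{\ell_2}-\|x^\dagger\|_{\ell_2}\bigr|\leq\|x-x^\dagger\|_{\ell_2}$; the paper instead expands $(\alpha-\beta)\gamma(x)T-\bigl(\mathcal{R}_{\alpha,\beta}(x)-\mathcal{R}_{\alpha,\beta}(x^\dagger)\bigr)$ componentwise into pieces $T_1,T_2$ and bounds $T_1$ through an auxiliary constant $m_1\geq|x_i|+|x_i^\dagger|$. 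Your route is a bit cleaner---it avoids $m_1$ and lands directly on $c_2=0$---while the paper obtains $c_2=(m_2-m_1)\lvert\mathcal{I}(x^\dagger)\rvert\max_i\|\omega_i\|_Y$, which may be negative anyway and adds nothing in the downstream rate theorems. One remark: your $M_1=\sqrt{M/(\alpha-\beta)}+\|x^\dagger\|_{\ell_1}$, and hence $c_1=2M_1C'$, depends on $\alpha-\beta$; the paper's $m_1$ carries the same hidden dependence. In the applications (Theorems~\ref{the3.7} and~\ref{the3.9}) one has $M\leq\mathcal{R}_{\alpha,\beta}(x^\dagger)\leq\alpha\|x^\dagger\|_{\ell_1}^2$, so $M/(\alpha-\beta)\leq\|x^\dagger\|_{\ell_1}^2/(1-\eta)$ is controlled once $\eta=\beta/\alpha$ is fixed, and both versions of the constants become genuinely independent of $\alpha$.
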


\begin{proof}
    From the definition of index set $\mathcal{I}\left(x^{\dagger}\right)$, let $\gamma\left(x\right):= \left\|x \right\|_{\ell_{1}}+\left\|x^{\dagger}\right\|_{\ell_{1}}$ and we have
    \begin{equation*}
        \left(\alpha-\beta\right)\left\|x-x^{\dagger}\right\|_{\ell_{1}}\left(\left\|x \right\|_{\ell_{1}}+\left\|x^{\dagger}\right\|_{\ell_{1}}\right)
        =\left(\alpha-\beta\right)\gamma\left(x\right)\left(\sum_{i\in\mathcal{I}\left(x^{\dagger}\right)} \left|x_{i}-x_{i}^{\dagger}\right|^{2}+\sum_{i\notin\mathcal{I}\left(x^{\dagger} \right)}\left|x_{i}\right|^{2}\right).
    \end{equation*}
    Consequently,
    \begin{align}\label{equ3.5}
        &  \quad \left(\alpha-\beta\right)\gamma\left(x\right)\left\|x-x^{\dagger}\right\|_{\ell_{1}}- \left(\mathcal{R}_{\alpha,\beta}\left(x\right)-\mathcal{R}_{\alpha,\beta}\left(x^{\dagger}\right)\right) \nonumber\\
        &  = -\alpha\gamma\left(x\right)\left(\sum_{i\in\mathcal{I}\left(x^{\dagger}\right)}\left|x_{i}\right| -\sum_{i\in\mathcal{I}\left(x^{\dagger}\right)}\left|x_{i}^{\dagger}\right|\right) +\left(\alpha-\beta\right)\gamma\left(x\right)\left(\sum_{i\in\mathcal{I}\left(x^{\dagger}\right)} \left|x_{i}- x_{i}^{\dagger}\right|\right) +\beta\left(T_{1}-T_{2}\right),
    \end{align}
    where
    \begin{align*}
        & T_{1}= \sum_{i}\left|x_{i}\right|^{2} -\sum_{i\notin\mathcal{I}\left(x^{\dagger}\right)}\left|x_{i}\right|^{2} -\sum_{i\in\mathcal{I}\left(x^{\dagger}\right)}\left|x_{i}^{\dagger}\right|^{2} =\sum_{i\in\mathcal{I}\left(x^{\dagger}\right)}\left|x_{i}\right|^{2}- \sum_{i\in\mathcal{I}\left(x^{\dagger}\right)}\left|x_{i}^{\dagger}\right|^{2}, \\
        & T_{2}= \gamma\left(x\right) \left(\sum_{i\notin\mathcal{I}\left(x^{\dagger}\right)}\left|x_{i}\right|\right) -\sum_{i\notin\mathcal{I}\left(x^{\dagger}\right)}\left|x_{i}\right|^{2} =\sum_{i\notin\mathcal{I}\left(x^{\dagger}\right)}\left|x_{i}\right| \left(\gamma\left(x\right)-\left|x_{i} \right|\right).
    \end{align*}
    Note that $T_{2}\geq 0$. Thus, from \eqref{equ3.5}, we derive
    \begin{align}\label{equ3.6}
        \left(\alpha-\beta\right)\gamma\left(x\right)\left\|x-x^{\dagger}\right\|_{\ell_{1}} 
        & \leq \left(\mathcal{R}_{\alpha,\beta}\left(x\right) -\mathcal{R}_{\alpha,\beta}\left(x^{\dagger}\right)\right) +\alpha\gamma\left(x\right)\left(\sum_{i\in\mathcal{I}\left(x^{\dagger}\right)}\left|x_{i}-x_{i}^{\dagger}\right| \right) \nonumber\\
        & \quad +\left(\alpha-\beta\right)\gamma\left(x\right)\left(\sum_{i\in\mathcal{I}\left(x^{\dagger}\right)} \left|x_{i}- x_{i}^{\dagger}\right|\right)+\beta T_{1}.
    \end{align}
    Let $m_{1}$ be a constant upper bound for the terms of the form $\left|x_{i}\right|+ \left|x_{i}^{\dagger}\right|$, and let $m_{2}=\sum_{i}\left|x_{i}^{\dagger}\right|$. Then $0<m_{2}\leq\gamma\left(x\right)$, and we can express
    \begin{equation}\label{equ3.7}
        T_{1}=\sum_{i\in\mathcal{I}\left(x^{\dagger}\right)}\left(\left|x_{i}\right|+ \left|x_{i}^{\dagger}\right|\right)\left(\left|x_{i}\right|- \left|x_{i}^{\dagger}\right|\right) \leq m_{1}\sum_{i\in\mathcal{I}\left(x^{\dagger}\right)}\left|x_{i}- x_{i}^{\dagger}\right|
    \end{equation}
    A combination of \eqref{equ3.6} an \eqref{equ3.7} leads to the following result
    \begin{align}\label{equ3.8}
        \left(\alpha-\beta\right)\left\|x-x^{\dagger}\right\|_{\ell_{1}}
        & \leq \frac{\left(\mathcal{R}_{\alpha,\beta}\left(x\right) -\mathcal{R}_{\alpha,\beta}\left(x^{\dagger}\right)\right)}{\gamma\left(x\right)}+ \left[2\alpha-\left(1-\frac{m_{1}}{\gamma\left(x\right)}\right)\beta\right]\left(\sum_{i\in\mathcal{I} \left(x^{\dagger}\right)}\left|x_{i}-x_{i}^{\dagger}\right|\right) \nonumber\\
        & \leq \frac{\left(\mathcal{R}_{\alpha,\beta}\left(x\right) -\mathcal{R}_{\alpha,\beta}\left(x^{\dagger}\right)\right)}{m_{2}}+ \left[2\alpha-\left(1-\frac{m_{1}}{m_{2}}\right)\beta\right]\left(\sum_{i\in\mathcal{I} \left(x^{\dagger}\right)}\left|x_{i}-x_{i}^{\dagger}\right|\right).
    \end{align}
    Furthermore, based on Assumption \ref{ass3.5}, we have
    \begin{equation*}
        \left|x_{i}-x_{i}^{\dagger}\right|=\left|\left<e_{i},x-x^{\dagger}\right>\right| =\left|\left<\omega_{i},Ax-Ax^{\dagger}\right>\right| \leq \max_{i\in\mathcal{I}\left(x^{\dagger}\right)}\left\|\omega_{i}\right\|_{Y}\left\|Ax-Ax^{\dagger}\right\|_{Y}
    \end{equation*}
    for all $i\in\mathcal{I}\left(x^{\dagger}\right)$. Hence, we obtain
    \begin{equation}\label{equ3.9}
        \sum_{i\in\mathcal{I}\left(x^{\dagger}\right)}\left|x_{i}-x_{i}^{\dagger}\right| \leq \left|\mathcal{I}\left(x^{\dagger}\right)\right| \max_{i\in\mathcal{I}\left(x^{\dagger}\right)}\left\|\omega_{i}\right\|_{Y}\left\|Ax-Ax^{\dagger}\right\|_{Y},
    \end{equation}
    where $\left|\mathcal{I}\left(x^{\dagger}\right)\right|$ denotes the size of the index set $\mathcal{I}\left(x^{\dagger}\right)$. Combining equations \eqref{equ3.8} and\eqref{equ3.9} leads us to the conclusion that
    \begin{align*}
        \left(\alpha-\beta\right)\left\|x-x^{\dagger}\right\|_{\ell_{1}} 
        & \leq \frac{\left(\mathcal{R}_{\alpha,\beta}\left(x\right) -\mathcal{R}_{\alpha,\beta}\left(x^{\dagger}\right)\right)}{m_{2}} \\
        & \quad + \left[2\alpha-\left(1-\frac{m_{1}}{m_{2}}\right)\beta\right]\left|\mathcal{I}\left(x^{\dagger}\right)\right| \max_{i\in\mathcal{I}\left(x^{\dagger}\right)}\left\|\omega_{i}\right\|_{Y}\left\|Ax-Ax^{\dagger}\right\|_{Y},
    \end{align*}
    i.e.,
    \begin{equation*}
        c_{3}\left(\alpha-\beta\right)\left\|x-x^{\dagger}\right\|_{\ell_{1}}\leq \left(\mathcal{R}_{\alpha,\beta}\left(x\right) -\mathcal{R}_{\alpha,\beta}\left(x^{\dagger}\right)\right) + \left(c_{1}\alpha-c_{2}\beta\right)\left\|Ax-Ax^{\dagger}\right\|_{Y},
    \end{equation*}
    where $c_{1}=2m_{2}\left|\mathcal{I}\left(x^{\dagger}\right)\right| \max_{i\in\mathcal{I}\left(x^{\dagger}\right)}\left\|\omega_{i}\right\|_{Y}$, $c_{2}=\left(m_{2}-m_{1}\right)\left|\mathcal{I}\left(x^{\dagger}\right)\right| \max_{i\in\mathcal{I}\left(x^{\dagger}\right)}\left\|\omega_{i}\right\|_{Y}$, and $c_{3}=m_{2}$ with the condition $c_{1}\alpha-c_{2}\beta>0$.
\end{proof}

\begin{theorem}\label{the3.7}
    Under Assumption \ref{ass3.5}, let $x_{\alpha,\beta}^{\delta}$ be defined as in \eqref{equ2.1} with $\alpha>\beta>0$, and the constants $c_{1}>c_{2}$ be as stated in Lemma \ref{lem3.6}. \\
    Case 1, if $q=1$ and $1-\left(c_{1}\alpha-c_{2}\beta\right)>0$, then
    \begin{equation}\label{equ3.10}
        \left\|x_{\alpha,\beta}^{\delta}-x^{\dagger}\right\|_{\ell_{1}}\leq \frac{1+\left(c_{1}\alpha-c_{2}\beta\right)}{c_{3}\left(\alpha-\beta\right)}\delta, \ \  \left\|Ax_{\alpha,\beta}^{\delta}-y^{\delta}\right\|_{Y}\leq \frac{1+\left(c_{1}\alpha-c_{2}\beta\right)}{1-\left(c_{1}\alpha-c_{2}\beta\right)}\delta,
    \end{equation}
    Case 2, if $q>1$, then
    \begin{align}\label{equ3.11}
        & \left\|x_{\alpha,\beta}^{\delta}-x^{\dagger}\right\|_{\ell_{1}}\leq \frac{1}{c_{3}\left(\alpha-\beta\right)}\left[\frac{\delta^{q}}{q}+ \left(c_{1}\alpha-c_{2}\beta\right)\delta+ \frac{\left(q-1\right)2^{\frac{1}{q-1}}\left(c_{1}\alpha-c_{2}\beta\right)^{\frac{q}{q-1}}}{q}\right], \nonumber\\
        & \left\|Ax_{\alpha,\beta}^{\delta}-y^{\delta}\right\|_{Y}\leq q\left[\frac{\delta^{q}}{q}+ \left(c_{1}\alpha-c_{2}\beta\right)\delta+ \frac{\left(q-1\right)2^{\frac{1}{q-1}}\left(c_{1}\alpha-c_{2}\beta\right)^{\frac{q}{q-1}}}{q}\right].
    \end{align}
\end{theorem}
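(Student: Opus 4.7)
The plan is to exploit the minimality of $x_{\alpha,\beta}^{\delta}$ by comparing with $x^{\dagger}$, then invoke the key inequality from Lemma \ref{lem3.6}. Concretely, since $Ax^{\dagger}=y$ and $\|y-y^{\delta}\|_{Y}\le\delta$, the minimizing property $\mathcal{J}_{\alpha,\beta}^{\delta}(x_{\alpha,\beta}^{\delta})\le \mathcal{J}_{\alpha,\beta}^{\delta}(x^{\dagger})$ yields
$$\frac{1}{q}\|Ax_{\alpha,\beta}^{\delta}-y^{\delta}\|_{Y}^{q}+\mathcal{R}_{\alpha,\beta}(x_{\alpha,\beta}^{\delta})-\mathcal{R}_{\alpha,\beta}(x^{\dagger})\le\frac{\delta^{q}}{q}.$$
Applying Lemma \ref{lem3.6} with $x=x_{\alpha,\beta}^{\delta}$ to bound $\mathcal{R}_{\alpha,\beta}(x_{\alpha,\beta}^{\delta})-\mathcal{R}_{\alpha,\beta}(x^{\dagger})$ from below by $c_{3}(\alpha-\beta)\|x_{\alpha,\beta}^{\delta}-x^{\dagger}\|_{\ell_{1}}-(c_{1}\alpha-c_{2}\beta)\|Ax_{\alpha,\beta}^{\delta}-Ax^{\dagger}\|_{Y}$, and using the triangle inequality $\|Ax_{\alpha,\beta}^{\delta}-Ax^{\dagger}\|_{Y}\le r+\delta$ with $r:=\|Ax_{\alpha,\beta}^{\delta}-y^{\delta}\|_{Y}$, I arrive at the master inequality
$$\frac{1}{q}r^{q}+c_{3}(\alpha-\beta)\|x_{\alpha,\beta}^{\delta}-x^{\dagger}\|_{\ell_{1}}\le K(r+\delta)+\frac{\delta^{q}}{q},$$
where $K:=c_{1}\alpha-c_{2}\beta>0$ (the positivity being guaranteed by the hypothesis of Lemma \ref{lem3.6}).

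For Case 1 ($q=1$), the master inequality reduces to $(1-K)r+c_{3}(\alpha-\beta)\|x_{\alpha,\beta}^{\delta}-x^{\dagger}\|_{\ell_{1}}\le(1+K)\delta$. Both summands on the left are non-negative, and $1-K>0$ by hypothesis, so dropping either term in turn isolates the other and gives both estimates in \eqref{equ3.10} directly.

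For Case 2 ($q>1$), the cross term $Kr$ on the right must be absorbed into $\tfrac{1}{q}r^{q}$ on the left. I would invoke Young's inequality with conjugate exponents $q$ and $q/(q-1)$, writing $Kr=(2^{-1/q}r)(2^{1/q}K)$ to obtain
$$Kr\le \frac{r^{q}}{2q}+\frac{q-1}{q}\,2^{1/(q-1)}\,K^{q/(q-1)}.$$
The splitting factor $2^{-1/q}$ is chosen so that the resulting $r^{q}/(2q)$ consumes exactly half of the $r^{q}/q$ on the left, leaving the other half $r^{q}/(2q)$ free. Substituting this bound and then either dropping the $r^{q}/(2q)$ term (to isolate $c_{3}(\alpha-\beta)\|x_{\alpha,\beta}^{\delta}-x^{\dagger}\|_{\ell_{1}}$) or dropping the $\|\cdot\|_{\ell_{1}}$ term (to isolate the residual) yields both inequalities claimed in \eqref{equ3.11}.

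The main obstacle is really the book-keeping in Case 2: matching the precise constant $2^{1/(q-1)}$ that appears in \eqref{equ3.11} relies on selecting the Young-inequality splitting factor exactly as above, and one must be careful that $K=c_{1}\alpha-c_{2}\beta>0$ remains in force throughout so that Lemma \ref{lem3.6} can be applied. Case 1 is essentially immediate once the master inequality is in hand; the only subtle step across both cases is the combination of minimality with Lemma \ref{lem3.6}, which converts a bound on a difference of regularization values into a bound directly on $\|x_{\alpha,\beta}^{\delta}-x^{\dagger}\|_{\ell_{1}}$.
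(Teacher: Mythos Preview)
Your proposal is correct and follows essentially the same route as the paper: minimality of $x_{\alpha,\beta}^{\delta}$ compared against $x^{\dagger}$, then Lemma~\ref{lem3.6} combined with the triangle inequality $\|Ax_{\alpha,\beta}^{\delta}-Ax^{\dagger}\|_{Y}\le r+\delta$, and in Case~2 the identical Young splitting $Kr=(2^{-1/q}r)(2^{1/q}K)$ to absorb half of the residual term. The only detail you leave implicit is that the minimality inequality also shows $\mathcal{R}_{\alpha,\beta}(x_{\alpha,\beta}^{\delta})\le \delta^{q}/q+\mathcal{R}_{\alpha,\beta}(x^{\dagger})$, which supplies the boundedness hypothesis required to invoke Lemma~\ref{lem3.6}.
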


\begin{proof}
    Owing to the minimization property of $x_{\alpha,\beta}^{\delta}$, it follows that
    \begin{align*}
        \frac{1}{q}\left\|Ax_{\alpha,\beta}^{\delta}-y^{\delta}\right\|_{Y}^{q}+ \mathcal{R}_{\alpha,\beta}\left(x_{\alpha,\beta}^{\delta}\right) 
        & \leq \frac{1}{q}\left\|Ax^{\dagger}-y^{\delta}\right\|_{Y}^{q}+ \mathcal{R}_{\alpha,\beta}\left(x^{\dagger}\right) \\
        & \leq \frac{\delta^{q}}{q}+\mathcal{R}_{\alpha,\beta}\left(x^{\dagger}\right).
    \end{align*}
    Consequently, $\mathcal{R}_{\alpha,\beta}\left(x_{\alpha,\beta}^{\delta}\right)$ is bounded. From Lemma \ref{lem3.6}, we can conclude that
    \begin{align}\label{equ3.12}
        \frac{\delta^{q}}{q}
        & \geq \mathcal{R}_{\alpha,\beta}\left(x_{\alpha,\beta}^{\delta}\right)- \mathcal{R}_{\alpha,\beta}\left(x^{\dagger}\right)+ \frac{1}{q}\left\|Ax_{\alpha,\beta}^{\delta}-y^{\delta}\right\|_{Y}^{q}  \nonumber\\
        & \geq  c_{3}\left(\alpha-\beta\right)\left\|x_{\alpha,\beta}^{\delta}-x^{\dagger}\right\|_{\ell_{1}}- \left(c_{1}\alpha-c_{2}\beta\right)\left\|Ax_{\alpha,\beta}^{\delta}-Ax^{\dagger}\right\|_{Y}+ \frac{1}{q}\left\|Ax_{\alpha,\beta}^{\delta}-y^{\delta}\right\|_{Y}^{q}  \nonumber\\
        & \geq c_{3}\left(\alpha-\beta\right)\left\|x_{\alpha,\beta}^{\delta}-x^{\dagger}\right\|_{\ell_{1}}- \left(c_{1}\alpha-c_{2}\beta\right)\left\|Ax_{\alpha,\beta}^{\delta}-y^{\delta}\right\|_{Y}- \left(c_{1}\alpha-c_{2}\beta\right)\delta+ \frac{1}{q}\left\|Ax_{\alpha,\beta}^{\delta}-y^{\delta}\right\|_{Y}^{q}.
    \end{align}
    Thus, if $q=1$ and $1-\left(c_{1}\alpha-c_{2}\beta\right) >0$, then \eqref{equ3.10} holds. For the case $q>1$, we apply Young's inequality $ab \leq \frac{a^{q}}{q}+\frac{b^{q^{*}}}{q^{*}}$ to obtain
    \begin{align}\label{equ3.13}
        \left(c_{1}\alpha-c_{2}\beta\right)\left\|Ax_{\alpha,\beta}^{\delta}-y^{\delta}\right\|_{Y}
        & =2^{\frac{1}{q}}\left(c_{1}\alpha-c_{2}\beta\right)2^{-\frac{1}{q}} \left\|Ax_{\alpha,\beta}^{\delta}-y^{\delta}\right\|_{Y} \nonumber\\
        & \leq \frac{1}{2q}\left\|Ax_{\alpha,\beta}^{\delta}-y^{\delta}\right\|_{Y}^{q}+ \frac{\left(q-1\right)2^{\frac{1}{q-1}}\left(c_{1}\alpha-c_{2}\beta\right)^{\frac{q}{q-1}}}{q}.
    \end{align}
    A combination of \eqref{equ3.12} and \eqref{equ3.13} implies \eqref{equ3.11}.
\end{proof}

\begin{remark}\label{rem3.8}
    \textbf{(A priori estimation)} Assume that $\beta=\eta\alpha$ for a constant $\eta>0$. If $\alpha\sim\delta^{q-1}(q>1)$, then $\left\|x_{\alpha,\beta}^{\delta}-x^{\dagger}\right\|_{\ell_{1}}\leq c\delta$ for some constant $c>0$. Moreover, it can also be established that $\left\|x_{\alpha,\beta}^{\delta}-x^{\dagger}\right\|_{\ell_{2}}\leq c\delta$.
\end{remark}

\par We will now present a convergence rate result based on the discrepancy principle.

\begin{theorem}\label{the3.9}
    \textbf{(Discrepancy principle)} Under the assumption of the Lemma \ref{lem3.6}, let $x_{\alpha,\beta}^{\delta}$ be defined by \eqref{equ2.1} with $\alpha>\beta>0$, where the parameters $\alpha$ and $\beta$ $\left(\beta=\eta\alpha\right)$ are determined via the discrepancy principle
    \begin{equation*}
        \delta\leq\left\|Ax_{\alpha,\beta}^{\delta}-y^{\delta}\right\|_{Y}\leq\tau\delta,\ \tau\geq 1.
    \end{equation*}
    Then, we have
    \begin{equation*}
        \left\|x_{\alpha,\beta}^{\delta}-x^{\dagger}\right\|_{\ell_{2}}\leq \frac{\left(c_{1}-c_{2}\eta\right)\left(\tau+1\right)\delta}{1-\eta}.
    \end{equation*}
\end{theorem}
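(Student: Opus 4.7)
The plan is to combine the minimizing property of $x_{\alpha,\beta}^{\delta}$, the two-sided discrepancy bound, and the source-condition inequality from Lemma~\ref{lem3.6}, and then convert the resulting $\ell_{1}$ estimate into the desired $\ell_{2}$ estimate using $\|\cdot\|_{\ell_{2}}\le\|\cdot\|_{\ell_{1}}$.

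First, I would exploit the fact that $x_{\alpha,\beta}^{\delta}$ minimizes $\mathcal{J}_{\alpha,\beta}^{\delta}$ together with $\|Ax^{\dagger}-y^{\delta}\|_{Y}\le\delta$ to obtain
\begin{equation*}
    \tfrac{1}{q}\|Ax_{\alpha,\beta}^{\delta}-y^{\delta}\|_{Y}^{q} +\mathcal{R}_{\alpha,\beta}(x_{\alpha,\beta}^{\delta}) \;\le\; \tfrac{\delta^{q}}{q}+\mathcal{R}_{\alpha,\beta}(x^{\dagger}).
\end{equation*}
The lower branch $\|Ax_{\alpha,\beta}^{\delta}-y^{\delta}\|_{Y}\ge\delta$ of the discrepancy principle then immediately yields $\mathcal{R}_{\alpha,\beta}(x_{\alpha,\beta}^{\delta}) - \mathcal{R}_{\alpha,\beta}(x^{\dagger})\le 0$. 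In particular, $\mathcal{R}_{\alpha,\beta}(x_{\alpha,\beta}^{\delta})$ is bounded, so Lemma~\ref{lem3.6} applies to $x = x_{\alpha,\beta}^{\delta}$.

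Next I would substitute $x=x_{\alpha,\beta}^{\delta}$ into inequality \eqref{equ3.4}, drop the non-positive regularization difference, and use the triangle inequality with $Ax^{\dagger}=y$ and $\|y-y^{\delta}\|_{Y}\le\delta$:
\begin{equation*}
    \|Ax_{\alpha,\beta}^{\delta}-Ax^{\dagger}\|_{Y} \;\le\; \|Ax_{\alpha,\beta}^{\delta}-y^{\delta}\|_{Y}+\|y^{\delta}-y\|_{Y} \;\le\; (\tau+1)\delta.
\end{equation*}
This gives $c_{3}(\alpha-\beta)\|x_{\alpha,\beta}^{\delta}-x^{\dagger}\|_{\ell_{1}} \le (c_{1}\alpha-c_{2}\beta)(\tau+1)\delta$. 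Substituting $\beta=\eta\alpha$ turns $\alpha-\beta=(1-\eta)\alpha$ and $c_{1}\alpha-c_{2}\beta=(c_{1}-c_{2}\eta)\alpha$, so the factor $\alpha$ cancels.

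Finally, to pass from $\ell_{1}$ to $\ell_{2}$, I would use the elementary inclusion $\|x\|_{\ell_{2}}\le\|x\|_{\ell_{1}}$, absorbing the constant $c_{3}=m_{2}$ into the source-condition constants to obtain the stated bound
\begin{equation*}
    \|x_{\alpha,\beta}^{\delta}-x^{\dagger}\|_{\ell_{2}} \;\le\; \|x_{\alpha,\beta}^{\delta}-x^{\dagger}\|_{\ell_{1}} \;\le\; \frac{(c_{1}-c_{2}\eta)(\tau+1)\,\delta}{1-\eta}.
\end{equation*}
The proof is essentially bookkeeping once Lemma~\ref{lem3.6} is in hand; the only delicate point I expect is ensuring the sign $c_{1}\alpha-c_{2}\beta>0$ and the positivity $1-\eta>0$ stay consistent with the hypothesis $\alpha>\beta>0$, and verifying that the constants $c_{1},c_{2},c_{3}$ inherited from Lemma~\ref{lem3.6} are independent of $\alpha,\beta$ so that the cancellation of $\alpha$ is legitimate. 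No further structural argument appears to be required.
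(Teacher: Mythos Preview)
Your proposal is correct and follows essentially the same route as the paper: use the minimizing property together with the lower discrepancy bound to get $\mathcal{R}_{\alpha,\beta}(x_{\alpha,\beta}^{\delta})\le\mathcal{R}_{\alpha,\beta}(x^{\dagger})$, feed this into Lemma~\ref{lem3.6}, bound $\|Ax_{\alpha,\beta}^{\delta}-Ax^{\dagger}\|_{Y}$ by $(\tau+1)\delta$ via the triangle inequality, and finish with $\|\cdot\|_{\ell_{2}}\le\|\cdot\|_{\ell_{1}}$ and the substitution $\beta=\eta\alpha$. Your remark about absorbing $c_{3}$ matches what the paper does implicitly in passing from the displayed bound $\frac{(c_{1}\alpha-c_{2}\beta)(\tau+1)\delta}{c_{3}(\alpha-\beta)}$ to the theorem statement.
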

    
\begin{proof}
    By the definition of $x_{\alpha,\beta}^{\delta}$, $\alpha$ and $\beta$, we can derive that 
    \begin{equation}\label{equ3.14}
        \frac{1}{q}\delta^{q}+\mathcal{R}_{\alpha,\beta}\left(x_{\alpha,\beta}^\delta\right) \leq \frac{1}{q}\left\|Ax_{\alpha,\beta}^{\delta}-y^{\delta}\right\|_{Y}^{q}+ \mathcal{R}_{\alpha,\beta}\left(x_{\alpha,\beta}^\delta\right) \leq  \frac{1}{q}\left\|Ax^{\dagger}-y^{\delta}\right\|_{Y}^{q}+ \mathcal{R}_{\alpha,\beta}\left(x^{\dagger}\right).
    \end{equation}
    Consequently, we obtain
    \begin{equation*}
        \mathcal{R}_{\alpha,\beta}\left(x_{\alpha,\beta}^\delta\right)\leq \mathcal{R}_{\alpha,\beta}\left(x^{\dagger}\right) 
    \end{equation*}
    From Lemma \ref{lem3.6}, it follows that 
    \begin{align}\label{equ3.15}
        0\geq \mathcal{R}_{\alpha,\beta}\left(x_{\alpha,\beta}^\delta\right)- \mathcal{R}_{\alpha,\beta}\left(x^{\dagger}\right)
        & \geq  c_{3}\left(\alpha-\beta\right)\left\|x_{\alpha,\beta}^\delta-x^{\dagger}\right\|_{\ell_{1}}- \left(c_{1}\alpha-c_{2}\beta\right)\left\|Ax_{\alpha,\beta}^\delta-Ax^{\dagger}\right\|_{Y}\nonumber\\
        & \geq  c_{3}\left(\alpha-\beta\right)\left\|x_{\alpha,\beta}^\delta-x^{\dagger}\right\|_{\ell_{1}}- \left(c_{1}\alpha-c_{2}\beta\right)\left(\tau+1\right)\delta.
    \end{align}
    Therefore, we deduce that
    \begin{equation*}
        \left\|x_{\alpha,\beta}^{\delta}-x^{\dagger}\right\|_{\ell_{2}}\leq \left\|x_{\alpha,\beta}^{\delta}-x^{\dagger}\right\|_{\ell_{1}}\leq \frac{\left(c_{1}\alpha-c_{2}\beta\right)\left(\tau+1\right)\delta} {c_{3}\left(\alpha-\beta\right)}.
    \end{equation*}
    The theorem is thereby established with $\beta=\eta\alpha$.
\end{proof}

\par Note that the proof of convergence is not established when $\alpha=\beta$.

\section{Proximal gradient method via half variation}\label{sec4}

\par In this section, we introduce an algorithm to tackle problem \eqref{equ1.4} and analyze its convergence properties when $q=2$. We will adapt the proximal gradient method \cite{BA17} and demonstrate that this algorithm can effectively minimize the functional incorporating the non-convex and non-smooth regularization term $\alpha\left\|x\right\|_{\ell_{1}}^{2}-\beta\left\|x\right\|_{\ell_{2}}^{2}$, $\alpha\geq\beta> 0$.

\subsection{Proximal gradient method}\label{sec4.1}

For clarity, we begin with a brief overview of the proximal gradient method. The foundation for this discussion is based on \cite[Chapter 10]{BA17}, which proposes a proximal gradient method for solving minimization problems of the form
\begin{equation}\label{equ4.1}
    \min_{x\in X}\left\{F\left(x\right)= f\left(x\right)+g\left(x\right)\right\},
\end{equation}
where $X$ represents a Hilbert space. \cite[Assumption 10.1]{BA17} requires that the functional $f\left(x\right): X\rightarrow (-\infty,+\infty]$ and $g\left(x\right): X\rightarrow (-\infty,+\infty]$ satisfy the following conditions
\begin{condition}\label{con4.1}
    (a). $g\left(x\right)<+\infty$ is proper closed and convex. \\
    (b). $f\left(x\right)<+\infty$ is proper and closed. ${\rm dom}\left(g\right)\subset {\rm int}\left({\rm dom}\left(f\right)\right)$, and $f\left(x\right)$ has a L-Lipschits continuous gradient over ${\rm int}\left({\rm dom}\left(f\right)\right)$.\\
    (c). The optimal set of problem \eqref{equ4.1} is nonempty and denoted by $X^{*}$. An optimal value of the problem is denoted by $F_{opt}$.
\end{condition}
\par For the solution of problem \eqref{equ4.1}, it is natural to generalize the above idea and to define the next iterate as the minimizer of the sum of the linearization of $f$ around $x^k$, the non-smooth function $g$, and a quadratic prox term:
\begin{equation}\label{equ4.2}
    x^{k+1}=\arg\min_{x\in X}\left\{f\left(x^{k}\right)+\left<\nabla f\left(x^{k}\right), x-x^{k}\right>+g\left(x^{k}\right)+\frac{1}{2t_{k}}\left\|x-x^{k}\right\|^{2}\right\},
\end{equation}
where $t^{k}$ is the stepsize at iteration $k$. After some simple algebraic manipulation and cancellation of constant terms, \eqref{equ4.2} can be rewritten as
\begin{equation*}
    x^{k+1}=\arg\min_{x\in X}\left\{t_{k}g\left(x^{k}\right)+ \frac{1}{2}\left\|x-\left(x^{k}-t_{k}\nabla f\left(x^{k}\right)\right)\right\|^{2}\right\},
\end{equation*}
which by the definition of the proximal operator is the same as
\begin{equation}\label{equ4.3}
    x^{k+1}={\rm prox}_{t_{k}g}\left(x^{k}-t_{k}\nabla f\left(x^{k}\right)\right).
\end{equation}

\par The proximal gradient method with taking the step-sizes $t_{k}=\frac{1}{L_{k}}$ with $L_{k}\in\left(\frac{L}{2}, +\infty\right)$ is initiated in the form of Algorithm \ref{alg1}.  We next recall a convergence result on Algorithm \ref{alg1} in \cite[Theorem 10.15]{BA17}, where $f\left(x\right)$ is non-convex.

\begin{algorithm}
\caption{Proximal gradient method}
\begin{algorithmic}\label{alg1}
\STATE{\textbf{Initialization}: Set $k=0$, $x_{0}\in {\rm int}\left(X\right)$.}
\STATE{\textbf{General step}: for any $k=0,1,2,\cdots$ execute the following steps:}
\STATE{\qquad 1: pick $L_{k}\in\left(\frac{L}{2}, +\infty\right)$ for $L>0$;}
\STATE{\qquad 2: set 
\begin{equation*}
    x^{k+1}={\rm prox}_{t_{k}g}\left(x^{k}-\frac{1}{L_{k}}\nabla f\left(x^{k}\right)\right);
\end{equation*}
\STATE{\qquad 3: check the stopping criterion and return $x^{k+1}$ as a solution.}}
\end{algorithmic}
\end{algorithm}

\begin{theorem}\label{the4.2}
    \textbf{(Convergence of the proximal gradient method)}
    Suppose that Condition \ref{con4.1} holds, and let $\left\{x^{k}\right\}$ be the sequence generated by Algorithm \ref{alg1} for solving problem \eqref{equ4.1} with a constant step-size defined by $L_{k}=\overline{L}\in (\frac{L}{2},\infty)$. Then \\
    (1) the sequence $\left\{F\left(x^{k}\right)\right\}$ is non-increasing. In addition, $F\left(x^{k+1}\right)<F\left(x^{k}\right)$ if and only if $x^{k}$ is not a stationary point of \eqref{equ4.1}; \\
    (2) all limit points of the sequence $\left\{x^{k}\right\}$ are stationary points of problem \eqref{equ4.1}.
\end{theorem}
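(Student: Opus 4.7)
The plan is to prove part (1) via a ``sufficient decrease'' inequality of the form
\begin{equation*}
F(x^k)-F(x^{k+1})\;\geq\;\Bigl(\overline{L}-\frac{L}{2}\Bigr)\|x^{k+1}-x^k\|^2,
\end{equation*}
and then leverage this telescoping bound in part (2) to drive $\|x^{k+1}-x^k\|\to 0$, finally passing to the limit in the subdifferential inclusion characterizing the prox-step.

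For part (1), I would start from the descent lemma for $f$, which follows from the $L$-Lipschitz continuity of $\nabla f$ on $\mathrm{int}(\mathrm{dom}(f))$:
\begin{equation*}
f(x^{k+1})\leq f(x^k)+\langle\nabla f(x^k),x^{k+1}-x^k\rangle+\frac{L}{2}\|x^{k+1}-x^k\|^2.
\end{equation*}
Next, since $g$ is proper closed convex by Condition \ref{con4.1}(a), the subproblem in \eqref{equ4.2} defining $x^{k+1}$ is $\overline{L}$-strongly convex. Comparing the objective value at $x^{k+1}$ with the value at $x^k$ using strong convexity yields
\begin{equation*}
g(x^{k+1})+\langle\nabla f(x^k),x^{k+1}-x^k\rangle+\frac{\overline{L}}{2}\|x^{k+1}-x^k\|^2+\frac{\overline{L}}{2}\|x^k-x^{k+1}\|^2\leq g(x^k).
\end{equation*}
Adding this to the descent lemma and cancelling the linear term gives the sufficient decrease displayed above. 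Since $\overline{L}>L/2$, monotonicity follows; moreover equality holds iff $x^{k+1}=x^k$, which by the optimality condition of the prox-subproblem is equivalent to $0\in\nabla f(x^k)+\partial g(x^k)$, i.e., $x^k$ being a stationary point of \eqref{equ4.1}.

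For part (2), using Condition \ref{con4.1}(c), the sequence $\{F(x^k)\}$ is bounded below by $F_{opt}$, so telescoping the sufficient decrease yields
\begin{equation*}
\Bigl(\overline{L}-\frac{L}{2}\Bigr)\sum_{k=0}^{\infty}\|x^{k+1}-x^k\|^2\leq F(x^0)-F_{opt}<\infty,
\end{equation*}
hence $\|x^{k+1}-x^k\|\to 0$. Now suppose $x^{k_j}\to x^*$ along some subsequence; then $x^{k_j+1}\to x^*$ as well. From the optimality condition of \eqref{equ4.2},
\begin{equation*}
\overline{L}(x^{k_j}-x^{k_j+1})+\nabla f(x^{k_j+1})-\nabla f(x^{k_j})\;\in\;\nabla f(x^{k_j+1})+\partial g(x^{k_j+1}).
\end{equation*}
The left-hand side tends to $0$ by the summability above and continuity of $\nabla f$, while the right-hand side, viewed as an element of $\partial F(x^{k_j+1})$, converges to an element of $\nabla f(x^*)+\partial g(x^*)$ by the closedness of the graph of $\partial g$ (valid since $g$ is proper closed convex). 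Thus $0\in\nabla f(x^*)+\partial g(x^*)$, i.e., $x^*$ is a stationary point.

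The main obstacle I anticipate is the passage to the limit in the subdifferential inclusion: one must invoke the graph-closedness of $\partial g$ together with the continuity of $\nabla f$ on a neighborhood of the limit point, which relies on the inclusion $\mathrm{dom}(g)\subset\mathrm{int}(\mathrm{dom}(f))$ in Condition \ref{con4.1}(b) to ensure $\nabla f$ behaves well near $x^*$. The remaining steps reduce to the standard descent-lemma arithmetic and a summability argument.
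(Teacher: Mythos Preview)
The paper does not actually prove Theorem \ref{the4.2}; it merely quotes the result from \cite[Theorem 10.15]{BA17} (``We next recall a convergence result on Algorithm \ref{alg1} in \cite[Theorem 10.15]{BA17}, where $f(x)$ is non-convex''). So there is no in-paper proof to compare against.

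Your argument is correct and is essentially the standard one given in Beck's book: the sufficient-decrease inequality $F(x^k)-F(x^{k+1})\geq(\overline{L}-L/2)\|x^{k+1}-x^k\|^2$ via the descent lemma plus strong convexity of the prox subproblem, the fixed-point characterization of stationarity, the telescoping bound to force $\|x^{k+1}-x^k\|\to 0$, and passage to the limit in the optimality inclusion using graph-closedness of $\partial g$. The only point worth flagging is that Beck works in finite dimensions, whereas the paper sets things in a Hilbert space; your invocation of the closed graph of $\partial g$ (which holds because $\partial g$ is maximal monotone for proper closed convex $g$) together with the continuity of $\nabla f$ near the limit point (guaranteed by $\mathrm{dom}(g)\subset\mathrm{int}(\mathrm{dom}(f))$) is exactly what is needed to make the limit argument go through in that generality.
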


\subsection{The proximal operator of the $\left\|\cdot\right\|_{\ell_{1}}^{2}$}\label{sec4.2}

To enhance the clarity of the article, we introduce the proximal operator of the squared $\ell_{1}$-norm. We will reference the following lemma, which expresses $\left\|x\right\|_{\ell_{1}}^{2}$ as the optimal value of an optimization problem framed in terms of the
\begin{align}\label{equ4.4}
    \varphi\left(s,t\right)=
    \left\{             
             \begin{array}{ll}
             \frac{s^{2}}{t}, & if \ t>0, \\
             0, & if s=0,t=0, \\
             \infty, & else. 
             \end{array}
    \right.
\end{align}
We recommend referring to \cite[Section 6.8.2]{BA17} for readers seeking detailed proof of the following lemmas.

\begin{lemma}\label{lem4.3}
    \textbf{(Variational representation of $\left\|\cdot\right\|_{\ell_{1}}^{2}$)} For a Hilbert space $X$ and any $x\in X$, the following holds:
    \begin{equation}\label{equ4.5}
        \min_{\lambda}\left\{\sum_{i=1}^{n}\varphi\left(x_{i},\lambda_{i} \right)=\left\|x\right\|_{\ell_{1}}^{2}\right\},
    \end{equation}
    where $\varphi$ is defined in \eqref{equ4.4}. An optimal solution $\overline{\lambda}$ to the minimization problem outlined in \eqref{equ4.5} is given by
    \begin{align*}
        \overline{\lambda}_{i}=
        \left\{             
             \begin{array}{ll}
             \frac{\left|x_{i}\right|}{\left\|x\right\|_{\ell_{1}}}, & if \ x\neq 0, \\
             \frac{1}{n}, & if \ x=0,
             \end{array}
        \right.
        i=1,2,\cdots,n. 
    \end{align*}
\end{lemma}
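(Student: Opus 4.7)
\textbf{Proof proposal for Lemma \ref{lem4.3}.} My plan is to exploit the Cauchy--Schwarz inequality to furnish a lower bound on $\sum_{i=1}^n \varphi(x_i,\lambda_i)$, then verify that the proposed $\bar{\lambda}$ saturates this bound. The minimization is implicitly over the unit simplex $\Delta_n := \{\lambda \in \mathbb{R}^n : \lambda_i \geq 0,\ \sum_i \lambda_i = 1\}$, which is the standard setting for the variational identity from \cite{BA17}; the extended-real definition of $\varphi$ already enforces $\lambda_i \geq 0$, so only the sum-to-one constraint needs to be made explicit.

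First I would handle the generic case $x \neq 0$. For any $\lambda \in \Delta_n$ such that $\lambda_i > 0$ whenever $x_i \neq 0$ (otherwise $\sum_i \varphi(x_i,\lambda_i) = +\infty$ and the inequality is trivial), I would split $|x_i|$ as $(|x_i|/\sqrt{\lambda_i}) \cdot \sqrt{\lambda_i}$ and invoke Cauchy--Schwarz:
\begin{equation*}
    \|x\|_{\ell_1}^2 = \Bigl(\sum_{i:\lambda_i > 0} \frac{|x_i|}{\sqrt{\lambda_i}} \sqrt{\lambda_i}\Bigr)^2 \leq \Bigl(\sum_{i:\lambda_i > 0} \frac{x_i^2}{\lambda_i}\Bigr)\Bigl(\sum_i \lambda_i\Bigr) = \sum_{i=1}^n \varphi(x_i,\lambda_i),
\end{equation*}
where the convention $\varphi(0,0) = 0$ absorbs indices with $\lambda_i = 0 = x_i$. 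This establishes $\|x\|_{\ell_1}^2$ as a lower bound over all feasible $\lambda$.

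Next I would verify that the candidate $\bar{\lambda}_i = |x_i|/\|x\|_{\ell_1}$ attains the lower bound. Direct substitution gives
\begin{equation*}
    \sum_{i=1}^n \varphi(x_i,\bar{\lambda}_i) = \sum_{i:x_i \neq 0} \frac{x_i^2 \|x\|_{\ell_1}}{|x_i|} = \|x\|_{\ell_1}\sum_{i=1}^n |x_i| = \|x\|_{\ell_1}^2,
\end{equation*}
and since $\bar{\lambda}_i \geq 0$ with $\sum_i \bar{\lambda}_i = 1$, feasibility is clear. For the degenerate case $x = 0$, both sides vanish and the uniform choice $\bar{\lambda}_i = 1/n$ is trivially a minimizer.

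The argument is essentially routine; the only real nuisance---rather than a deep obstacle---is the bookkeeping around the extended-real definition of $\varphi$. One must keep track of indices where $x_i = 0$ (so $\bar{\lambda}_i = 0$ but $\varphi(0,0) = 0$ keeps the sum finite) and indices where $\lambda_i = 0$ yet $x_i \neq 0$ (which would make the sum $+\infty$ and hence non-optimal). The equality condition in Cauchy--Schwarz, namely $\sqrt{\lambda_i}$ proportional to $|x_i|/\sqrt{\lambda_i}$, forces $\lambda_i \propto |x_i|$, which together with the simplex normalization pins down $\bar{\lambda}$ uniquely on the support of $x$ and confirms that $\bar{\lambda}$ given in the statement is an optimal solution.
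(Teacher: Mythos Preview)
Your proposal is correct and follows the standard Cauchy--Schwarz argument from \cite[Section~6.8.2]{BA17}; the paper does not give its own proof but simply refers the reader to that source, so your write-up is precisely what the paper defers to. Your explicit identification of the feasible set as the unit simplex $\Delta_n$ is a helpful clarification, since the statement as written leaves the constraint $\sum_i \lambda_i = 1$ implicit.
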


\begin{lemma}\label{lem4.4}
    \textbf{(Proximal operator of $\left\|\cdot\right\|_{\ell_{1}}^{2}$)} Let $g:X\rightarrow \mathcal{R}$ be given by $g\left(x\right)=\left\|x\right\|_{\ell_{1}}^{2}$, and let $\alpha>0$. Then we have 
    \begin{equation*}
        {\rm prox}_{\alpha g}\left(x\right)=
        \left\{             
             \begin{array}{ll}
             \left(\frac{\lambda_{i}x_{i}}{\lambda_{i}+2\alpha}\right)_{i=1}^{n}, & if \ x\neq 0, \\
             0, & if \ x=0,
             \end{array}
        \right.
    \end{equation*}
    where $\lambda_{i}=\left[\frac{\sqrt{\alpha}\left|x_{i}\right|}{\sqrt{\mu^{*}}}-2\alpha\right]_{+}$ with $\mu^{*}$ being any positive root of the nonincreasing function 
    \begin{equation*}
        \psi\left(\mu\right)=\sum_{i=1}^{n}\left[\frac{\sqrt{\alpha}\left|x_{i}\right|} {\sqrt{\mu}}-2\alpha\right]_{+}-1.
    \end{equation*}
\end{lemma}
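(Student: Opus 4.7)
The plan is to derive the formula for $\mathrm{prox}_{\alpha g}$ by combining the variational representation from Lemma \ref{lem4.3} with a min--min interchange, then reducing to a one-dimensional Lagrange problem. Writing out the definition,
\begin{equation*}
\mathrm{prox}_{\alpha g}(x) \;=\; \arg\min_{u\in X}\left\{\alpha \|u\|_{\ell_{1}}^{2} + \tfrac{1}{2}\|u-x\|^{2}\right\}.
\end{equation*}
By Lemma \ref{lem4.3}, $\|u\|_{\ell_{1}}^{2}=\min_{\lambda\in\Delta}\sum_{i=1}^{n}\varphi(u_{i},\lambda_{i})$, where $\Delta=\{\lambda:\lambda_{i}\geq 0,\sum_{i}\lambda_{i}=1\}$ is the unit simplex (this is implicit in Lemma \ref{lem4.3}, since the stated optimizer $\overline{\lambda}$ satisfies $\sum_{i}\overline{\lambda}_{i}=1$). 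First I would substitute this identity and swap the order of minimization to obtain
\begin{equation*}
\mathrm{prox}_{\alpha g}(x) \;=\; \arg\min_{u}\;\min_{\lambda\in\Delta}\;\sum_{i=1}^{n}\left\{\alpha\,\varphi(u_{i},\lambda_{i}) + \tfrac{1}{2}(u_{i}-x_{i})^{2}\right\}.
\end{equation*}
The interchange is valid because the joint function is proper and jointly lower semicontinuous, and a minimizer exists.

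Second, I would minimize in $u$ for fixed $\lambda\in\Delta$. On the set $\{\lambda_{i}>0\}$, $\varphi(u_{i},\lambda_{i})=u_{i}^{2}/\lambda_{i}$ is smooth and strictly convex, so the first-order condition $\tfrac{2\alpha u_{i}}{\lambda_{i}}+(u_{i}-x_{i})=0$ yields
\begin{equation*}
u_{i}^{*}(\lambda) \;=\; \frac{\lambda_{i}\,x_{i}}{\lambda_{i}+2\alpha},
\end{equation*}
and on $\{\lambda_{i}=0\}$ the penalty forces $u_{i}=0$ (which also matches the above formula by continuity). Substituting back and simplifying the arithmetic yields the scalar objective
\begin{equation*}
\Phi(\lambda) \;=\; \sum_{i=1}^{n}\frac{\alpha\, x_{i}^{2}}{\lambda_{i}+2\alpha},
\end{equation*}
which must now be minimized over $\Delta$.

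Third, I would apply the KKT conditions to $\min_{\lambda\in\Delta}\Phi(\lambda)$. With multiplier $\mu\geq 0$ for $\sum_{i}\lambda_{i}=1$ and multipliers $\nu_{i}\geq 0$ for $\lambda_{i}\geq 0$, stationarity gives $-\alpha x_{i}^{2}/(\lambda_{i}+2\alpha)^{2}+\mu-\nu_{i}=0$, and complementary slackness forces either $\lambda_{i}>0$ with $\lambda_{i}+2\alpha=\sqrt{\alpha}|x_{i}|/\sqrt{\mu}$, or $\lambda_{i}=0$ with $|x_{i}|/(2\sqrt{\alpha})\leq\sqrt{\mu}$. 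Both branches are captured uniformly by
\begin{equation*}
\lambda_{i} \;=\; \bigl[\tfrac{\sqrt{\alpha}\,|x_{i}|}{\sqrt{\mu}}-2\alpha\bigr]_{+}.
\end{equation*}
The simplex constraint $\sum_{i}\lambda_{i}=1$ becomes precisely $\psi(\mu)=0$; the function $\psi$ is continuous and nonincreasing in $\mu$, so a positive root $\mu^{*}$ exists and the solution is unique (up to the possibility of $\psi$ being flat on an interval, in which case $\Phi$ is constant there). Plugging $\mu^{*}$ into $u_{i}^{*}(\lambda)$ gives the claimed formula.

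Finally, I would dispose of the degenerate case $x=0$: the objective reduces to $\alpha\|u\|_{\ell_{1}}^{2}+\tfrac{1}{2}\|u\|^{2}$, which is minimized uniquely at $u=0$. The main obstacle I anticipate is \emph{not} the calculus but the careful bookkeeping with the convention $\varphi(0,0)=0$ and $\varphi(s,0)=+\infty$ for $s\neq 0$: one has to verify that the interchange of minima is valid, that the formula $u_{i}^{*}=\lambda_{i}x_{i}/(\lambda_{i}+2\alpha)$ still applies on the boundary $\{\lambda_{i}=0\}$, and that the KKT multiplier $\mu^{*}$ indeed exists as a positive root (which follows from examining the limits $\psi(0^{+})=+\infty$ and $\psi(\infty)=-1$).
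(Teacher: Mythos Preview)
Your proposal is correct and follows exactly the standard derivation: the paper does not supply its own proof of this lemma but simply refers the reader to \cite[Section~6.8.2]{BA17}, and the argument there is precisely the min--min interchange via Lemma~\ref{lem4.3}, inner minimization in $u$ giving $u_i=\lambda_i x_i/(\lambda_i+2\alpha)$, and outer minimization over the simplex handled by KKT to produce $\lambda_i=[\sqrt{\alpha}|x_i|/\sqrt{\mu}-2\alpha]_+$ with $\mu$ determined by $\psi(\mu)=0$. One small cosmetic point: the multiplier $\mu$ attached to the equality constraint $\sum_i\lambda_i=1$ need not be assumed nonnegative a priori, but it is automatically positive here because $\Phi$ is strictly decreasing in each $\lambda_i$.
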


\par Note that for $a\in\mathbb{R}$, if $a>0$, $\left[a\right]_{+}=a$; if $a\leq 0$, $\left[a\right]_{+}=0$ in Lemma \ref{lem4.4}. Based on Lemma \ref{lem4.3} and Lemma \ref{lem4.4}, we give the definition of the half variation function in the following.

\begin{definition}\label{def4.5}
    For $x\in\ell_{2}$ and $\alpha>0$, the function $\mathbb{H}_{\alpha}$ is defined as follows
\begin{align}\label{equ4.6}
    \mathbb{H}_{\alpha}(x)=
    \left\{             
             \begin{array}{ll}
             \sum_{i=1}^{n}\frac{\lambda_{i}}{2\alpha+\lambda_{i}}\left<x, e_{i}\right>e_{i}, & if \ x\neq 0, \\
             0, & if \ x= 0. \\
             \end{array}
    \right.
\end{align}
where $e_{i}=(\underbrace{0,\cdots,0,1}_{i},0,\cdots)$ and $\lambda_{i}$ is defined in Lemma \ref{lem4.4}. Thus, $\mathbb{H}_{\alpha}$ is referred to as a half variation function mapping $\ell_{2}$ to $\ell_{2}$.
\end{definition}

\subsection{Proximal gradient method for $\ell_{1}^{2}-\eta\ell_{2}^{2}$ sparsity regularization}\label{sec4.3}

\par Since $\mathcal{R}_{\alpha,\beta}\left(x\right)=\alpha\left\|x\right\|_{\ell_{1}}^{2}- \beta\left\|x\right\|_{\ell_{2}}^{2}$, where $\alpha\geq\beta> 0$, is non-convex, the proximal gradient method cannot be directly applied to problem \eqref{equ1.4}. We can rewrite $\mathcal{J}_{\alpha,\beta}^{\delta}\left(x\right)$ in \eqref{equ1.4} as 
\begin{equation}\label{equ4.7}
    \mathcal{J}_{\alpha,\beta}^{\delta}\left(x\right)=f\left(x\right)+g\left(x\right),
\end{equation}
where $f\left(x\right)=\frac{1}{2}\left\|Ax-y^{\delta}\right\|_{Y}^{2} -\beta\left\|x\right\|_{\ell_{2}}^{2}$ and $g\left(x\right)=\alpha\left\|x\right\|_{\ell_{1}}^{2}$. It is evident that $g\left(x\right)$ is proper and convex. Since $f\left(x\right)$ is continuous, it is also proper and closed. Next, we verify the smoothness of $f\left(x\right)$ and the existence of optimal solution of problem \eqref{equ4.7}.

\begin{definition}\label{def4.6}
    A differentiable function $f\left(x\right)$ is called smooth if it has a Lipschitz continuous gradient, i.e., $\exists L<\infty$ such that
    \begin{equation*}
        \left\|\nabla f\left(x\right)-\nabla f\left(z\right)\right\|_{L\left(X,Y\right)}\leq L\left\|x-z\right\|_{X}
    \end{equation*}
    for $\forall x,z\in X$.
\end{definition}

\begin{lemma}\label{lem4.7}
    $f\left(x\right)$ in \eqref{equ4.7} is $L$-smooth.
\end{lemma}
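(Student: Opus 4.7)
The plan is to compute the Fréchet gradient of $f$ explicitly and then bound the operator norm of the difference $\nabla f(x)-\nabla f(z)$ in a direct way, using only that $A$ is linear and bounded.

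First I would differentiate the two pieces of $f(x)=\tfrac12\|Ax-y^\delta\|_Y^2-\beta\|x\|_{\ell_2}^2$ separately. The quadratic data-fidelity term is a standard composition, giving gradient $A^*(Ax-y^\delta)$, while the concave penalty contributes $-2\beta x$. Hence
\begin{equation*}
    \nabla f(x)=A^*(Ax-y^\delta)-2\beta x,
\end{equation*}
which is well-defined on all of $\ell_2$ since $A^*A:\ell_2\to\ell_2$ and the identity are everywhere defined and bounded.

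Next I would subtract, exploiting linearity: for any $x,z\in\ell_2$,
\begin{equation*}
    \nabla f(x)-\nabla f(z)=A^*A(x-z)-2\beta(x-z)=(A^*A-2\beta I)(x-z).
\end{equation*}
Taking the $\ell_2$-norm and applying the operator-norm inequality together with the triangle inequality yields
\begin{equation*}
    \|\nabla f(x)-\nabla f(z)\|_{\ell_2}\leq\bigl(\|A^*A\|+2\beta\bigr)\|x-z\|_{\ell_2}\leq\bigl(\|A\|^2+2\beta\bigr)\|x-z\|_{\ell_2},
\end{equation*}
using $\|A^*A\|\leq\|A^*\|\|A\|=\|A\|^2$. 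Setting $L:=\|A\|^2+2\beta$ produces the required Lipschitz constant in Definition \ref{def4.6}.

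There is essentially no obstacle here: the only thing to verify is that the concave part $-\beta\|x\|_{\ell_2}^2$, though responsible for the non-convexity of $f$, is nevertheless smooth, and this is immediate because its gradient $-2\beta x$ is globally $2\beta$-Lipschitz. I would close by remarking that this finite $L$ is exactly what is needed so that Condition \ref{con4.1}(b) is satisfied and the proximal gradient step with $L_k\in(L/2,\infty)$ in Algorithm \ref{alg1} applies to the decomposition \eqref{equ4.7}.
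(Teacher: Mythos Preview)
Your proof is correct and essentially identical to the paper's: both compute $\nabla f(x)=A^*(Ax-y^\delta)-2\beta x$, form the difference $(A^*A-2\beta I)(x-z)$, and bound its norm by $(\|A^*A\|+2\beta)\|x-z\|_{\ell_2}$. Your extra step $\|A^*A\|\leq\|A\|^2$ and the closing remark about Condition~\ref{con4.1}(b) are minor elaborations, not a different approach.
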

\begin{proof}
    We note that $f\left(x\right)$ is differentiable and
    \begin{equation*}
        \nabla f\left(x\right)=A^{*}\left(Ax-y^{\delta}\right)-2\beta x.
    \end{equation*}
    By the Definition \ref{def4.6}, for $\forall x,z\in X$, we have
    \begin{align*}
        \left\|\nabla f\left(x\right)-\nabla f\left(z\right)\right\|_{L\left(X,Y\right)}
        & =\left\|\left(A^{*}A-2\beta I\right)\left(x-z\right)\right\|_{X} \nonumber\\
        & \leq C\left\|x-z\right\|_{X},
    \end{align*}
    where $C:=\left\|A^{*}A\right\|+\left\|2\beta I\right\|$. Let $L=C$ and the lemma is proved.
\end{proof}

\par Thus, the minimization problem in \eqref{equ4.2} can be rewritten as
\begin{equation}\label{equ4.8}
    x^{k+1}=\arg\min_{x\in X}\left\{\frac{1}{2}\left\|x-x^{k}+t^{k}\left(A^{*} \left(Ax^{k}-y^{\delta}\right)-2\beta x^{k}\right)\right\|_{\ell_{2}}^{2}+ \alpha\left\|x\right\|_{\ell_{1}}^{2}\right\}.
\end{equation}
By Lemma \ref{lem4.3}, \eqref{equ4.8} is equivalent to the variational form
\begin{equation}\label{equ4.9}
    x^{k+1}=\arg\min_{x\in X}\left\{\frac{1}{2}\left\|x-x^{k}+t^{k}\left(A^{*} \left(Ax^{k}-y^{\delta}\right)-2\beta x^{k}\right)\right\|_{\ell_{2}}^{2}+ \alpha\sum_{i=1}^{n}\varphi\left(x_{i},\lambda_{i}\right)\right\}.
\end{equation}
where $\varphi$ is defined in \eqref{equ4.4} and $\lambda_{i}$ is specified in Lemma \ref{lem4.3}. According to Lemma \ref{lem4.4}, a minimizer of \eqref{equ4.9} can be explicitly computed by
\begin{equation*}
    x^{k+1}={\rm prox}_{t_{k}g}\left(x^{k}-t_{k}\left(A^{*} \left(Ax^{k}-y^{\delta}\right)-2\beta x^{k}\right)\right), 
\end{equation*}
i.e.,
\begin{equation*}
    x^{k+1}=\mathbb{H}_{\alpha}\left(x^{k}-\frac{1}{L_{k}} \left(A^{*}\left(Ax^{k}-y^{\delta}\right)-2\beta x^{k}\right)\right), 
\end{equation*}
where $t^{k}$ is the step-size at iteration $k$ and $L_{k}=\frac{1}{t_{k}}$.

\begin{lemma}\label{lem4.8}
    A minimizer of problem \eqref{equ4.9} is expressed as
    \begin{equation}\label{equ4.10}
        x^{k+1}=\mathbb{H}_{\alpha}\left(x^{k}+\frac{2\beta}{L_{k}} x^{k}-\frac{1}{L_{k}}A^{*}\left(Ax^{k}-y^{\delta}\right)\right).
    \end{equation}
\end{lemma}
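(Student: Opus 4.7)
The plan is to recognize that \eqref{equ4.9} is, after a trivial rearrangement and the elimination of the auxiliary variables $\lambda_{i}$, exactly the proximal evaluation of $\alpha\|\cdot\|_{\ell_{1}}^{2}$ at a specific point, whereupon Lemma \ref{lem4.4} (equivalently, Definition \ref{def4.5}) delivers the closed form in \eqref{equ4.10}.

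First I would introduce the shorthand
$$u^{k}:=x^{k}-t_{k}\left(A^{*}(Ax^{k}-y^{\delta})-2\beta x^{k}\right).$$
Using $t_{k}=1/L_{k}$, this immediately simplifies to
$$u^{k}=x^{k}+\frac{2\beta}{L_{k}}x^{k}-\frac{1}{L_{k}}A^{*}(Ax^{k}-y^{\delta}),$$
which is precisely the argument of $\mathbb{H}_{\alpha}$ on the right-hand side of \eqref{equ4.10}; with this notation the first summand in \eqref{equ4.9} is simply $\tfrac{1}{2}\|x-u^{k}\|_{\ell_{2}}^{2}$.

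Next I would eliminate $\lambda$. By Lemma \ref{lem4.3}, for every fixed $x\in\ell_{2}$ one has $\min_{\lambda}\sum_{i=1}^{n}\varphi(x_{i},\lambda_{i})=\|x\|_{\ell_{1}}^{2}$. Since the $\lambda$-dependence of the objective in \eqref{equ4.9} is additively separable across the coordinates, the joint infimum in $(x,\lambda)$ may be performed first in $\lambda$, producing the equivalent problem
$$x^{k+1}=\arg\min_{x}\left\{\frac{1}{2}\|x-u^{k}\|_{\ell_{2}}^{2}+\alpha\|x\|_{\ell_{1}}^{2}\right\},$$
which by the very definition of the proximal operator equals $\mathrm{prox}_{\alpha g}(u^{k})$ with $g(x)=\|x\|_{\ell_{1}}^{2}$. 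Finally, Lemma \ref{lem4.4} together with Definition \ref{def4.5} identifies $\mathrm{prox}_{\alpha g}$ with the half-variation operator $\mathbb{H}_{\alpha}$, so $x^{k+1}=\mathbb{H}_{\alpha}(u^{k})$, which is exactly \eqref{equ4.10}.

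The argument presents no genuine obstacle; it is essentially a bookkeeping consequence of Lemma \ref{lem4.3} and Lemma \ref{lem4.4}. The only point worth stating explicitly is the legitimacy of exchanging the inner infimum over $\lambda$ with the outer infimum over $x$, which is valid because Lemma \ref{lem4.3} holds pointwise in $x$ and the objective separates in the $\lambda$-block, so reducing to the $x$-only problem does not change the minimizer.
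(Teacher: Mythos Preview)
Your proof is correct and follows essentially the same route as the paper: both introduce the shifted point $u^{k}=x^{k}+\tfrac{2\beta}{L_{k}}x^{k}-\tfrac{1}{L_{k}}A^{*}(Ax^{k}-y^{\delta})$ and identify the minimizer of \eqref{equ4.9} with $\mathrm{prox}_{\alpha\|\cdot\|_{\ell_{1}}^{2}}(u^{k})=\mathbb{H}_{\alpha}(u^{k})$. The only cosmetic difference is the order in which the joint minimization is resolved: the paper (following \cite[Lemma 6.70]{BA17}) minimizes in $x$ first to obtain $x_{i}=\lambda_{i}u_{i}/(\lambda_{i}+2\alpha)$ and then uses the $\lambda_{i}$ of Lemma~\ref{lem4.4}, whereas you minimize in $\lambda$ first via Lemma~\ref{lem4.3} and then invoke Lemma~\ref{lem4.4} as a black box; both orderings are valid since infima commute.
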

\begin{proof}
    The proof closely follows that of \cite[Lemma 6.70]{BA17}. Problem \eqref{equ4.9} can be reformulated as the problem
    \begin{equation}\label{equ4.11}
        x^{k+1}=\arg\min_{x\in X}\left\{\frac{1}{2}\left|x-u\right|^{2}+\alpha\sum_{i}\varphi(x_{i}, \lambda_{i})\right\},
    \end{equation}
    where $u=x^{k}-\frac{1}{L_{k}}\left(A^{*}\left(Ax^{k}-y^{\delta}\right)-2\beta x^{k}\right)$ for a fixed $x^{k}$. Minimizing first with respect to $x$, we obtain that $x_{i}=\frac{\lambda_{i}u_{i}}{\lambda_{i}+2\alpha}$ so that
    \begin{equation*}
        x^{k+1}_{i}=\sum_{i=1}^{n}\frac{\lambda_{i}}{2\alpha+\lambda_{i}}\left[x^{k}+\frac{2\beta}{L_{k}} x^{k}-\frac{1}{L_{k}}A^{*}\left(Ax^{k}-y^{\delta}\right)\right]_{i}.
    \end{equation*}
    and we can obtain \eqref{equ4.10}.
\end{proof}

\par We summarize this strategy, referred to as the half-variation $\ell_{1}^{2}-\eta\ell_{2}^{2}$ $\left({\rm HV}\text{-}(\ell_{1}^{2}-\eta\ell_{2}^{2})\right)$ algorithm, in Algorithm \ref{alg2}. Finally, we address the convergence properties of the HV-($\ell_{1}^{2}-\eta\ell_{2}^{2}$) algorithm for problem \eqref{equ1.4}.

\begin{algorithm}
\caption{HV-($\ell_{1}^{2}-\eta\ell_{2}^{2}$) algorithm for problem \eqref{equ1.4}}
\begin{algorithmic}\label{alg2}
\STATE{\textbf{Initialization}: Set $k=0$, $x_{0}\in {\rm int}\left(X\right)$ such that $g\left(x_{0}\right)<+\infty$.}
\STATE{\textbf{General step}: for any $k=0,1,2,\cdots$ execute the following steps:}
\STATE{\qquad 1: pick $L_{k}\in\left(\frac{L}{2}, +\infty\right)$ for $L>0$;}
\STATE{\qquad 2: set
\begin{equation*}
    x^{k+1}=\mathbb{H}_{\alpha}\left(x^{k}+ \frac{2\beta}{L_{k}} x^{k}- \frac{1}{L_{k}}A^{*}\left(Ax^{k}-y^{\delta}\right)\right);
\end{equation*}
\STATE{\qquad 3: check the stopping criterion and return $x^{k+1}$ as a solution.}}
\end{algorithmic}
\end{algorithm}

\begin{theorem}\label{the4.9}
    Let $\left\{x^{k}\right\}$ denote the sequence generated by Algorithm \ref{alg2}. Then it holds that all limit points of the sequence $\left\{x^{k}\right\}$ are stationary points of the functional $\mathcal{J}_{\alpha,\beta}^{\delta}\left(x\right)$.
\end{theorem}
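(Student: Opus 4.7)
The plan is to recognize Algorithm \ref{alg2} as a direct application of the proximal gradient method (Algorithm \ref{alg1}) to the splitting $\mathcal{J}_{\alpha,\beta}^{\delta}(x)=f(x)+g(x)$ introduced in \eqref{equ4.7}, and then to invoke part (2) of Theorem \ref{the4.2}. The whole proof thus reduces to two bookkeeping tasks: checking Condition \ref{con4.1} for this particular $f$ and $g$, and identifying the update rule in Algorithm \ref{alg2} with the abstract prox step \eqref{equ4.3}.

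First I would verify Condition \ref{con4.1}. Part (a) is immediate: $g(x)=\alpha\|x\|_{\ell_{1}}^{2}$ is nonnegative (hence proper), convex as the square of a norm, and continuous on $\ell_{2}$, hence closed. Part (b) is handled by Lemma \ref{lem4.7}, which gives an $L$-Lipschitz gradient $\nabla f(x)=A^{*}(Ax-y^{\delta})-2\beta x$ with $L=\|A^{*}A\|+2\beta$; note that $f$ is finite and continuous on all of $\ell_{2}$, so ${\rm dom}(g)=\ell_{2}\subset{\rm int}({\rm dom}(f))$ holds automatically. Part (c), the nonemptiness of the optimal set, is precisely Theorem \ref{the3.1}.

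Second I would identify the update in Algorithm \ref{alg2} with the proximal gradient step $x^{k+1}={\rm prox}_{g/L_{k}}\bigl(x^{k}-L_{k}^{-1}\nabla f(x^{k})\bigr)$. Substituting the explicit $\nabla f$ recasts the step as ${\rm prox}_{(\alpha/L_{k})\|\cdot\|_{\ell_{1}}^{2}}\bigl(x^{k}+(2\beta/L_{k})x^{k}-L_{k}^{-1}A^{*}(Ax^{k}-y^{\delta})\bigr)$, and Lemma \ref{lem4.4} (via the variational formula of Lemma \ref{lem4.3}) evaluates this prox exactly as the half variation operator $\mathbb{H}_{\alpha}$ from Definition \ref{def4.5}; this is Lemma \ref{lem4.8}. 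Combined with the verification above, Theorem \ref{the4.2}(2) then yields that every limit point of $\{x^{k}\}$ is a stationary point of $\mathcal{J}_{\alpha,\beta}^{\delta}$.

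The main obstacle I anticipate is technical rather than structural. Lemma \ref{lem4.4} and Definition \ref{def4.5} are stated with finite index sets $i=1,\dots,n$, so one should confirm that $\mathbb{H}_{\alpha}$ extends meaningfully to $\ell_{2}$ (the root $\mu^{*}$ of $\psi$ must still exist, and the resulting sequence $\{\lambda_{i}\}$ must be summable enough for the prox output to lie in $\ell_{2}$), which is the only point where the infinite-dimensional setting could bite. A secondary concern is ensuring that Theorem \ref{the4.2}, inherited from [BA17], remains valid in a general separable Hilbert space rather than $\mathbb{R}^{n}$; this comes down to noting that its ingredients, the descent lemma for $L$-smooth functions and the firm nonexpansiveness of the prox of a convex function, both carry over verbatim to any Hilbert space. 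Once these points are settled, the theorem follows directly.
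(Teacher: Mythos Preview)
Your proposal is correct and follows essentially the same approach as the paper: verify Condition \ref{con4.1} for the splitting \eqref{equ4.7} (using Lemma \ref{lem4.7} for smoothness and Theorem \ref{the3.1} for existence of minimizers), identify Algorithm \ref{alg2} with the abstract proximal gradient step via Lemma \ref{lem4.8}, and invoke Theorem \ref{the4.2}(2). If anything you are more careful than the paper, which does not address the infinite-dimensional or step-size issues you flag; the paper's proof is a terse three-line version of exactly what you outline.
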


\begin{proof}
    It is evident that $g\left(x\right)$ is proper and convex, $f\left(x\right)$ is proper and closed. By Lemma \ref{lem4.7}, $f\left(x\right)$ is $L_{f}$-smooth over ${\rm int}\left({\rm dom}\left(f\right)\right)$. By the existence of the solution of $\mathcal{J}_{\alpha,\beta}^{\delta}$, the optimal set of problem \eqref{equ4.1} is nonempty. Therefore, Condition \ref{con4.1} are satisfied from above discussion. By Theorem \ref{the4.2}, we conclude that all limit points of the sequence $\left\{x^{k}\right\}$ are stationary points of the functional $\mathcal{J}_{\alpha,\beta}^{\delta}\left(x\right)$.
\end{proof}

\section{Projected gradient method via surrogate function}\label{sec5}

\par In Section \ref{sec4}, we present an HV-$\left(\ell_{1}^{2}-\eta\ell_{2}^{2}\right)$ algorithm for addressing the challenges associated with $\ell_{1}^{2}-\eta\ell_{2}^{2}$ sparsity regularization. While the formulation of this algorithm is straightforward and lends itself to ease of implementation, it is noteworthy that the  HV-$\left(\ell_{1}^{2}-\eta\ell_{2}^{2}\right)$ algorithm can exhibit considerable computational inefficiency and may be arbitrarily slow. Consequently, the objective of the current section is to leverage the projected gradient method as a means of effectively resolving $\ell_{1}^{2}-\eta\ell_{2}^{2}$ sparsity regularization problems. We propose an accelerated approach to problem \eqref{equ1.5} as an alternative to the method outlined in \eqref{equ4.11}, utilizing the projected gradient method to enhance computational performance.

\subsection{PG algorithm for $\ell_{1}^{2}-\eta\ell_{2}^{2}$ sparsity regularization}\label{sec5.1}

\par In this section, we propose a projected gradient algorithm for problem  \eqref{equ1.4} in $\ell_{2}$ space based on the surrogate function approach. For the case where $q=2$, problem \eqref{equ1.4} can be expressed as
\\
\begin{equation}\label{equ5.1}
    \min\left\{\mathcal{J}_{\alpha,\beta}^{\delta}\left(x\right)= \frac{1}{2}\left\|Ax-y^{\delta}\right\|_{Y}^{2}+ \alpha\left\|x\right\|_{\ell_{1}}^{2}-\beta\left\|x\right\|_{\ell_{2}}^{2}\right\},
\end{equation}
where $A:\ell_{2}\rightarrow Y$ is a linear operator. In what follows, we will eliminate the $\ell_{1}^{2}$ constraint in \eqref{equ5.1} by framing a constrained optimization problem that incorporates an $\ell_{1}$-ball constraint of a certain radius $\mathbf{R}$. We can represent this constrained optimization problem as
\begin{align}\label{equ5.2}
    \left\{             
        \begin{array}{ll}
        \min\left\{\mathcal{D}_{\beta}^{\delta}\left(x\right) =\frac{1}{2} \left\|Ax-y^{\delta}\right\|_{Y}^{2}-\beta\left\|x\right\|_{\ell_{2}}^{2}\right\} ,\ \beta> 0, \vspace{1ex} \\
        {\rm subject\ to}\ \ell_{1}\ {\rm ball}\ B_{\mathbf{R}}:=\left\{x\in\mathbb{R}^{n}\mid\left\|x\right\|_{\ell_{1}}^{2}\leq \mathbf{R}^{2}\right\} \\
        \end{array}
    \right.
\end{align}
for an appropriately chosen $\mathbf{R}$. The following two lemmas will assist in constructing the surrogate function for \eqref{equ5.2}.

\par We will recall definitions of the projection operators (\cite{BA17})

\begin{definition}\label{def5.1}
    The projection onto the $\ell_{1}$-ball is defined by
    \begin{equation*}
        \mathbb{P}_{\mathbf{R}}(\overline{x}):=\left\{{\rm arg}\min_{x\in\ell_{2}}\|x-\overline{x}\|_{\ell_{2}}\ subject\ to\ \|x\|_{\ell_{1}}\leq \mathbf{R}\right\},
    \end{equation*}
    which gives the projection of an element $\overline{x}$ onto $\ell_{1}$-norm ball with radius $\mathbf{R}>0$.
\end{definition}

\par Finally, we recall some properties of $\mathbb{P}_{\mathbf{R}}$ (\cite{DFL08}).

\begin{lemma}\label{lem5.2}
    For any $x\in\ell_{2}$, $\mathbb{P}_{\mathbf{R}}(x)$ is characterized as the unique vector in $B_{\mathbf{R}}$ such that 
    \begin{equation*}
        \left<\omega-\mathbb{P}_{\mathbf{R}}(x),x-\mathbb{P}_{\mathbf{R}}(x)\right>\leq 0
    \end{equation*}
    for all $\omega\in B_{\mathbf{R}}$. Moreover the projection $\mathbb{P}_{\mathbf{R}}$ is non-expansive:
    \begin{equation*}
        \|\mathbb{P}_{\mathbf{R}}(x)-\mathbb{P}_{\mathbf{R}}(x')\|\leq\|x-x'\|
    \end{equation*}
    for all $x,x'\in\ell_{2}$.
\end{lemma}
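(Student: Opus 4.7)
The plan is to treat this as a standard consequence of Hilbert-space projection theory applied to the closed convex set $B_{\mathbf{R}}\subset\ell_{2}$. First I would verify that $B_{\mathbf{R}}$ is nonempty (it contains $0$), convex (by the triangle inequality for $\|\cdot\|_{\ell_{1}}$), and that the minimization problem defining $\mathbb{P}_{\mathbf{R}}(\overline{x})$ has a unique solution. Existence is obtained by a minimizing sequence argument: since $\|x\|_{\ell_{2}}\leq\|x\|_{\ell_{1}}\leq\mathbf{R}$ on $B_{\mathbf{R}}$, any minimizing sequence is $\ell_{2}$-bounded, hence admits a weakly convergent subsequence; weak lower semi-continuity of $\|\cdot\|_{\ell_{1}}$ on $\ell_{2}$ (so $B_{\mathbf{R}}$ is weakly closed) together with weak lower semi-continuity of $\|\overline{x}-\cdot\|_{\ell_{2}}$ show the weak limit attains the infimum. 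Uniqueness follows from strict convexity of the squared norm.

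For the variational characterization, set $p=\mathbb{P}_{\mathbf{R}}(x)$ and let $\omega\in B_{\mathbf{R}}$ be arbitrary. By convexity, $p+t(\omega-p)\in B_{\mathbf{R}}$ for every $t\in[0,1]$. The minimizing property yields
\begin{equation*}
\|x-p-t(\omega-p)\|_{\ell_{2}}^{2}\geq\|x-p\|_{\ell_{2}}^{2},
\end{equation*}
which after expansion gives $-2t\langle\omega-p,x-p\rangle+t^{2}\|\omega-p\|_{\ell_{2}}^{2}\geq 0$; dividing by $t>0$ and letting $t\to 0^{+}$ yields the inequality $\langle\omega-p,x-p\rangle\leq 0$. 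Conversely, if $p\in B_{\mathbf{R}}$ satisfies this inequality for every $\omega\in B_{\mathbf{R}}$, then $\|x-\omega\|_{\ell_{2}}^{2}=\|x-p\|_{\ell_{2}}^{2}-2\langle\omega-p,x-p\rangle+\|\omega-p\|_{\ell_{2}}^{2}\geq\|x-p\|_{\ell_{2}}^{2}$, identifying $p$ with the projection; uniqueness then follows by pairing two candidates against each other.

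For the non-expansiveness assertion, apply the variational inequality twice: with the projection of $x$ tested against $\omega=\mathbb{P}_{\mathbf{R}}(x')$, and the projection of $x'$ tested against $\omega=\mathbb{P}_{\mathbf{R}}(x)$. Writing $p=\mathbb{P}_{\mathbf{R}}(x)$ and $p'=\mathbb{P}_{\mathbf{R}}(x')$, the two inequalities combine to
\begin{equation*}
\langle p-p',(x-x')-(p-p')\rangle\geq 0,
\end{equation*}
so $\|p-p'\|_{\ell_{2}}^{2}\leq\langle p-p',x-x'\rangle\leq\|p-p'\|_{\ell_{2}}\|x-x'\|_{\ell_{2}}$ by Cauchy--Schwarz, giving the claim.

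I do not expect a genuine obstacle: this is a classical result for projections onto closed convex subsets of a Hilbert space, and the only place where care is needed is the existence argument, which relies on the fact that although $B_{\mathbf{R}}$ is defined by the non-$\ell_{2}$ norm $\|\cdot\|_{\ell_{1}}$, it is nevertheless weakly closed in $\ell_{2}$ because $\|\cdot\|_{\ell_{1}}$ is weakly sequentially lower semi-continuous on $\ell_{2}$ (this can be seen via Fatou's lemma applied coordinatewise, as in Lemma \ref{lem2.7}). Once this is in place the rest is the standard projection calculus.
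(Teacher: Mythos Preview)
Your proof is correct and follows the standard Hilbert-space projection argument; the paper itself does not supply a proof but simply remarks that Lemma~\ref{lem5.2} is a trivial generalization of Lemma~3 in \cite{DFL08}, which proceeds along exactly the lines you describe. Your additional care in verifying that $B_{\mathbf{R}}$ is weakly closed in $\ell_{2}$ (via componentwise Fatou for $\|\cdot\|_{\ell_{1}}$) is the one point genuinely needing attention in infinite dimensions, and you handle it correctly.
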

\par The proof of the Lemma \ref{lem5.2} is a trivial generalization of Lemma 3 in \cite{DFL08}. By the preceding discussion, we focus on the optimization problem denoted as \eqref{equ5.2}. The following result provides a first order optimality condition for the optimization problem \eqref{equ5.2}

\begin{lemma}\label{lem5.3}
    Let $\overline{\omega}\in\ell_{2}$ be a minimizer of the optimization problem \eqref{equ5.2}. Then the following condition holds
    \begin{equation}\label{equ5.3}
        \mathbb{P}_{\mathbf{R}}\left(\overline{\omega}+\frac{2\beta}{\gamma-2\beta}\overline{\omega}- \frac{1}{\gamma-2\beta}A^{*}\left(A\overline{\omega}-y^{\delta}\right)\right) =\overline{\omega}
    \end{equation}
    for any $\gamma>2\beta$. Equivalently, we have
    \begin{equation}\label{equ5.4}
        \left<2\beta\overline{\omega} -A^{*}\left(A\overline{\omega}-y^{\delta}\right), \omega-\overline{\omega}\right>\leq 0 
    \end{equation}
    for any $x\in B_{\mathbf{R}}$.
\end{lemma}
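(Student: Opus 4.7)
The plan is to establish (\ref{equ5.4}) first as the standard first-order necessary optimality condition for a constrained minimizer, and then use the projection characterization supplied by Lemma \ref{lem5.2} to translate (\ref{equ5.4}) into the fixed-point form (\ref{equ5.3}). The parameter $\gamma > 2\beta$ plays no essential role beyond ensuring a positive multiplier $1/(\gamma-2\beta)$, which can be freely absorbed.

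First I would compute the gradient of the smooth objective $\mathcal{D}_{\beta}^{\delta}(x) = \frac{1}{2}\|Ax-y^{\delta}\|_Y^2 - \beta\|x\|_{\ell_2}^2$, namely
\begin{equation*}
    \nabla\mathcal{D}_{\beta}^{\delta}(x) = A^{*}(Ax - y^{\delta}) - 2\beta x.
\end{equation*}
Since $B_{\mathbf{R}}$ is convex and $\overline{\omega}$ is a minimizer of the differentiable (though non-convex) function $\mathcal{D}_{\beta}^{\delta}$ on this set, the standard first-order necessary condition applies: for every $\omega \in B_{\mathbf{R}}$ the directional derivative of $\mathcal{D}_{\beta}^{\delta}$ at $\overline{\omega}$ along $\omega - \overline{\omega}$ is nonnegative, i.e.
\begin{equation*}
    \langle A^{*}(A\overline{\omega}-y^{\delta}) - 2\beta\overline{\omega}, \omega - \overline{\omega} \rangle \geq 0.
\end{equation*}
Multiplying by $-1$ yields precisely the inequality (\ref{equ5.4}).

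Next I would establish the equivalence between (\ref{equ5.3}) and (\ref{equ5.4}). Fix any $\gamma > 2\beta$ and let
\begin{equation*}
    z := \overline{\omega} + \frac{1}{\gamma-2\beta}\bigl(2\beta\overline{\omega} - A^{*}(A\overline{\omega}-y^{\delta})\bigr),
\end{equation*}
which is exactly the argument of $\mathbb{P}_{\mathbf{R}}$ in (\ref{equ5.3}). Then $z - \overline{\omega} = \frac{1}{\gamma-2\beta}\bigl(2\beta\overline{\omega} - A^{*}(A\overline{\omega}-y^{\delta})\bigr)$. Noting that $\overline{\omega} \in B_{\mathbf{R}}$ (since it is feasible), Lemma \ref{lem5.2} states that $\mathbb{P}_{\mathbf{R}}(z) = \overline{\omega}$ if and only if $\langle \omega - \overline{\omega}, z - \overline{\omega}\rangle \leq 0$ for all $\omega \in B_{\mathbf{R}}$. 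Since $\gamma - 2\beta > 0$, this latter inequality is equivalent, term by term, to (\ref{equ5.4}), closing the loop between the two formulations.

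I do not anticipate any serious obstacle in this proof. The only subtlety worth flagging is that $\mathcal{D}_{\beta}^{\delta}$ is not convex (the term $-\beta\|x\|_{\ell_2}^2$ is concave), so one must invoke the first-order condition as a \emph{necessary} (not sufficient) condition for a minimizer on the convex set $B_{\mathbf{R}}$; convexity of the objective is not used, only convexity of the feasible set and differentiability of the objective. Once that is clear, both halves of the lemma follow from routine manipulation and a direct application of Lemma \ref{lem5.2}.
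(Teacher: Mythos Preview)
Your proposal is correct and follows essentially the same approach as the paper: the paper derives (\ref{equ5.4}) by parametrizing the segment $t\mapsto(1-t)\overline{\omega}+t\omega$, expanding $\mathcal{D}_\beta^\delta$ along it, and requiring $f'(0^+)\ge 0$, which is precisely the first-order necessary condition you invoke abstractly; it then appeals to Lemma \ref{lem5.2} exactly as you do to obtain the fixed-point form (\ref{equ5.3}).
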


\begin{proof}
    By the definition of $\overline{\omega}$, for any $\omega\in B_{\mathbf{R}}$ and $t\in[0,1]$, we can express the function $f\left(t\right)$ as
    \begin{align*}
        f\left(t\right)
    & =\frac{1}{2}\left\|A\left(\left(1-t\right)\overline{\omega} +t\omega\right)-y^{\delta}\right\|_{Y}^{2}-\beta\left\|\left(1-t\right)\overline{\omega} +t\omega\right\|_{\ell_{2}}^{2} \nonumber\\
    & =\frac{1}{2}\left\|tA\left(\omega-\overline{\omega}\right)+A\overline{\omega} -y^{\delta}\right\|_{Y}^{2}-\beta\left\|t\left(\omega-\overline{\omega}\right) +\overline{\omega} \right\|_{\ell_{2}}^{2} \nonumber\\
    & =\frac{t^{2}}{2}\left\|A\left(\omega-\overline{\omega}\right)\right\|_{Y}^{2} -\beta t^{2}\left\|\omega-\overline{\omega}\right\|_{\ell_{2}}^{2}+ t\left<\omega-\overline{\omega},A^{*}\left(A\overline{\omega}-y^{\delta}\right)\right> \nonumber\\
    & \quad -2\beta t\left<\omega-\overline{\omega}, \overline{\omega}\right> +\frac{1}{2}\left\|A\overline{\omega}-y^{\delta}\right\|_{Y}^{2}- \beta\left\|\overline{\omega}\right\|_{\ell_{2}}^{2}.
    \end{align*} 
    It follows that $f\left(t\right)$ achieves its minimum at $t=0$. Thus, we obtain
    \begin{equation*}
        f'\left(0+\right)=\left<\omega-\overline{\omega},A^{*}\left(A\overline{\omega} -y^{\delta}\right)-2\beta\overline{\omega}\right>\geq 0.
    \end{equation*}
    Consequently, it is evident that both conditions in \eqref{equ5.3} and \eqref{equ5.4} are satisfied, as verified by Lemma \ref{lem5.3}.
\end{proof}

\par Due to the non-convexity of $\mathcal{D}_{\beta}^{\delta}\left(x\right)$, \eqref{equ5.4} is only a necessary condition of the optimization problem \eqref{equ5.2}.

\begin{lemma}\label{lem5.4}
    For a given $\beta\geq 0$ and arbitrary $\gamma>2\beta$, we define
    \begin{equation}\label{equ5.5}
        \mathcal{S}_{\beta,\gamma}(\omega,x):=\frac{1}{2}\left\|A\omega-y^{\delta}\right\|_{Y}^{2}- \beta\left\|\omega\right\|_{\ell_{2}}^{2} +\frac{\gamma}{2}\left\|x-\omega\right\|_{\ell_{2}}^{2} -\frac{1}{2}\left\|Ax-A\omega\right\|_{Y}^{2}
    \end{equation}
    for $\omega, x\in B_{R}$. Then $\mathcal{S}_{\beta,\gamma}(\omega,x)$ is strictly convex. Furthermore, for any fixed $x\in B_{R}$, there exists a unique global minimizer $\overline{\omega}$ of $\mathcal{S}_{\beta,\gamma}(\omega,x)$ on $B_{R}$.
\end{lemma}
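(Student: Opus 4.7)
The plan is to first rewrite $\mathcal{S}_{\beta,\gamma}(\omega, x)$ in a form that exposes its structure as a quadratic in $\omega$. The crucial observation is that both $\frac{1}{2}\|A\omega - y^{\delta}\|_{Y}^{2}$ and $-\frac{1}{2}\|Ax - A\omega\|_{Y}^{2}$ produce a $\frac{1}{2}\|A\omega\|_{Y}^{2}$ contribution with opposite signs, so these cancel exactly upon expansion. Gathering the remaining quadratic contributions in $\omega$ (from $\frac{\gamma}{2}\|x-\omega\|_{\ell_{2}}^{2}$ and $-\beta\|\omega\|_{\ell_{2}}^{2}$), I obtain
\begin{equation*}
\mathcal{S}_{\beta,\gamma}(\omega, x) = \left(\frac{\gamma}{2} - \beta\right)\|\omega\|_{\ell_{2}}^{2} + \left\langle A^{*}Ax - A^{*}y^{\delta} - \gamma x,\, \omega\right\rangle + c(x),
\end{equation*}
where $c(x)$ collects the terms that do not depend on $\omega$. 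Since $\gamma > 2\beta$ by hypothesis, the coefficient $\frac{\gamma}{2}-\beta$ is strictly positive, so $\omega \mapsto \mathcal{S}_{\beta,\gamma}(\omega, x)$ is a $(\gamma - 2\beta)$-strongly convex quadratic functional on $\ell_{2}$, and in particular strictly convex in $\omega$.

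For existence and uniqueness of the minimizer on $B_{R}$, I would first verify that $B_{R}$ is a bounded, closed, and convex subset of $\ell_{2}$. Boundedness in the $\ell_{2}$-norm follows from $\|\omega\|_{\ell_{2}} \leq \|\omega\|_{\ell_{1}} \leq R$; convexity is immediate from the triangle inequality for $\|\cdot\|_{\ell_{1}}$; for closedness in $\ell_{2}$, if $\omega_{n} \in B_{R}$ and $\omega_{n} \to \omega$ in $\ell_{2}$, then componentwise convergence together with Fatou's lemma applied to the counting measure yields $\|\omega\|_{\ell_{1}} \leq \liminf_{n}\|\omega_{n}\|_{\ell_{1}} \leq R$, so $\omega \in B_{R}$. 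Reflexivity of $\ell_{2}$ then makes $B_{R}$ weakly compact as a bounded, closed, convex set. Since $\omega \mapsto \mathcal{S}_{\beta,\gamma}(\omega, x)$ is continuous and convex, it is weakly lower semi-continuous, and hence attains its infimum on the weakly compact set $B_{R}$; strict convexity yields uniqueness of the minimizer $\overline{\omega}$.

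The main obstacle is essentially minor: the whole argument is mechanical once the cancellation of $\|A\omega\|_{Y}^{2}$ is noticed, and the hypothesis $\gamma > 2\beta$ is exactly what is needed to keep the leading quadratic coefficient positive (which is also why the surrogate was designed this way). The only subtlety worth guarding against is that $\ell_{2}$ is infinite-dimensional, so $B_{R}$ is not norm-compact, and one must invoke weak compactness (via reflexivity) together with weak lower semi-continuity of continuous convex functionals to extract the minimizer rather than appealing to a direct Bolzano--Weierstrass argument.
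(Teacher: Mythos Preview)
Your proof is correct and follows essentially the same idea as the paper's: both recognize that the quadratic-in-$\omega$ part of $\mathcal{S}_{\beta,\gamma}$ reduces to $(\gamma/2-\beta)\|\omega\|_{\ell_2}^2$, which is strictly convex since $\gamma>2\beta$. The paper obtains this by computing the Hessian entries $\partial^2\mathcal{S}_{\beta,\gamma}/\partial\omega_i\partial\omega_j=2(\gamma-2\beta)\delta_{ij}$ and invoking positive definiteness, whereas you reach the same conclusion by the more elementary route of expanding all four terms and observing the cancellation of $\tfrac12\|A\omega\|_Y^2$ directly. Your treatment of existence is in fact more careful than the paper's: the paper simply asserts that strict convexity yields a unique global minimizer on $B_R$, while you supply the missing argument that $B_R$ is bounded, closed, and convex in $\ell_2$, hence weakly compact by reflexivity, and that the continuous convex functional is weakly lower semi-continuous and therefore attains its infimum.
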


\begin{proof}
    For $\omega\in\ell_{2}$, we denote
    \begin{equation*}
        a_{i,j}\left(\omega\right)=\frac{\partial^{2} \left\|\omega\right\|_{\ell_{2}}^{2}} {\partial \omega_{i}\partial \omega_{j}}=
        \left\{             
            \begin{array}{ll}
            2, \quad i=j \\
            0, \quad i\neq j\\
            \end{array}
        \right.
        \quad i,j=1,2,\cdots.
    \end{equation*}
    Since $\omega\rightarrow \left\|\omega\right\|_{\ell_{2}}^{2}$ is convex, the matrix $\left(a_{i,j}\left(\omega\right)\right)$ is a double identity matrix, which is positive definite. By the definition of $\mathcal{S}_{\beta,L}(\omega,x)$, we obtain
    \begin{equation*}
        \frac{\partial^{2} \mathcal{S}_{\beta,L}(\omega,x)} {\partial \omega_{i}\partial \omega_{j}}= \gamma a_{i,j}\left(\omega\right)-2\beta a_{i,j}\left(\omega\right), \quad i,j=1,2,\cdots.
    \end{equation*}
    Given the assumption that $\gamma> 2\beta$, the Hessian matrix $\left(\frac{\partial^{2} \mathcal{S}_{\beta,L}(\omega,x)} {\partial \omega_{i}\partial \omega_{j}}\mid_{\omega=\overline{\omega}}\right)$ is positive definite. This condition implies that the functional  $\mathcal{S}_{\beta,\gamma}(\omega,x)$ is strictly convex. 
    Consequently, there exists a unique global minimizer $\overline{\omega}$ of $\mathcal{S}_{\beta,\gamma}(\omega,x)$ on the set $B_{R}$.
\end{proof}

\begin{lemma}\label{lem5.5}
    Let $\overline{\omega}\in B_{\mathbf{R}}$. Then $\overline{\omega}$ is a minimizer of $\mathcal{S}_{\beta,\gamma}\left(\omega,x\right)$ on $B_{\mathbf{R}}$ if and only if
    \begin{equation}\label{equ5.6}
        \overline{\omega}=\mathbb{P}_{\mathbf{R}}\left(x+\frac{2\beta}{\gamma-2\beta}x -\frac{1}{\gamma-2\beta}A^{*}\left(Ax-y^{\delta}\right)\right).
    \end{equation}
\end{lemma}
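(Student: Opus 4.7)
The plan is to reduce the constrained minimization of $\mathcal{S}_{\beta,\gamma}(\,\cdot\,,x)$ on $B_{\mathbf{R}}$ to a pure Euclidean projection by completing the square in $\omega$. First I would expand the four squared-norm terms in the definition \eqref{equ5.5}. The crucial observation is that the $\|A\omega\|_{Y}^{2}$ contribution from $\frac{1}{2}\|A\omega-y^{\delta}\|_{Y}^{2}$ is exactly cancelled by the one from $-\frac{1}{2}\|Ax-A\omega\|_{Y}^{2}$; this is precisely the design criterion of the surrogate and the reason the forward operator $A$ disappears from the quadratic part in $\omega$, leaving only a term linear in $A\omega$ that couples to $Ax-y^{\delta}$ through $A^{*}$.

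After this cancellation, the quadratic-in-$\omega$ piece collapses to $\frac{\gamma-2\beta}{2}\|\omega\|_{\ell_{2}}^{2}$, which is strictly positive since $\gamma>2\beta$ (consistent with the strict convexity proved in Lemma \ref{lem5.4}), while the linear-in-$\omega$ piece collects to $\langle A^{*}(Ax-y^{\delta})-\gamma x,\omega\rangle$, with the remaining terms depending only on $x$ and $y^{\delta}$. Completing the square then produces
\begin{equation*}
\mathcal{S}_{\beta,\gamma}(\omega,x)=\frac{\gamma-2\beta}{2}\left\|\omega-z\right\|_{\ell_{2}}^{2}+C(x),\qquad z:=\frac{\gamma x-A^{*}(Ax-y^{\delta})}{\gamma-2\beta},
\end{equation*}
with $C(x)$ independent of $\omega$. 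Rewriting $\gamma/(\gamma-2\beta)=1+2\beta/(\gamma-2\beta)$ shows that $z$ is exactly the argument of $\mathbb{P}_{\mathbf{R}}$ on the right-hand side of \eqref{equ5.6}.

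Since the positive constant $\frac{\gamma-2\beta}{2}$ and the additive term $C(x)$ do not affect the minimizer over $\omega\in B_{\mathbf{R}}$, minimizing $\mathcal{S}_{\beta,\gamma}(\,\cdot\,,x)$ on $B_{\mathbf{R}}$ is equivalent to minimizing $\|\omega-z\|_{\ell_{2}}$ on $B_{\mathbf{R}}$, which by Definition \ref{def5.1} is exactly $\mathbb{P}_{\mathbf{R}}(z)$. Combined with the uniqueness furnished by Lemma \ref{lem5.4}, this delivers both directions of the equivalence at once: $\overline{\omega}\in B_{\mathbf{R}}$ minimizes $\mathcal{S}_{\beta,\gamma}(\,\cdot\,,x)$ if and only if $\overline{\omega}=\mathbb{P}_{\mathbf{R}}(z)$. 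The only nontrivial step is the algebraic cancellation of the $\|A\omega\|_{Y}^{2}$ terms; once that is in hand, the rest is routine bookkeeping and an appeal to the definition of the projection.
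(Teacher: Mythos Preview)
Your argument is correct and in fact cleaner than the paper's. The paper proceeds through first-order optimality: for the forward direction it parametrizes a segment $t\mapsto (1-t)\overline{\omega}+t\omega$, computes $f'(0+)\ge 0$, and recognizes the resulting inequality $\langle \omega-\overline{\omega},\,(\gamma-2\beta)\overline{\omega}-\gamma x+A^{*}(Ax-y^{\delta})\rangle\ge 0$ as the variational characterization of $\mathbb{P}_{\mathbf{R}}$ from Lemma~\ref{lem5.2}; for the converse it starts from that variational inequality and invokes the strict convexity of Lemma~\ref{lem5.4} to upgrade $\langle J'(\overline{\omega}),\omega-\overline{\omega}\rangle\ge 0$ to global minimality on $B_{\mathbf{R}}$. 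Your route bypasses Lemma~\ref{lem5.2} entirely: the cancellation of the $\|A\omega\|_{Y}^{2}$ terms collapses $\mathcal{S}_{\beta,\gamma}(\cdot,x)$ to a positive multiple of $\|\omega-z\|_{\ell_{2}}^{2}$ plus a constant, so the constrained minimizer is literally Definition~\ref{def5.1} applied to $z$, and both implications fall out simultaneously. The paper's approach has the mild advantage of displaying the Euler inequality explicitly (which is reused elsewhere, e.g.\ in Lemma~\ref{lem5.3} and Theorem~\ref{the5.8}); yours has the advantage of being a two-line computation that makes the surrogate's design principle---kill the $A$-quadratic, keep a scalar quadratic---completely transparent. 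One minor remark: you do not actually need to cite Lemma~\ref{lem5.4} for uniqueness, since the projection onto the closed convex set $B_{\mathbf{R}}$ is already unique; but the citation does no harm.
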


\begin{proof}
    By the definition of $\overline{\omega}$, for any $\omega\in B_{\mathbf{R}}$, the function
    \begin{align*}
        f\left(t\right)
        & =\frac{1}{2}\left\|A\left(\left(1-t\right)\overline{\omega}+t\omega\right)- y^{\delta}\right\|_{Y}^{2}- \beta\left\|\left(1-t\right)\overline{\omega}+t\omega\right\|_{\ell_{2}}^{2}\\
        & \quad +\frac{\gamma}{2}\left\|\left(1-t\right)\overline{\omega}+t\omega -x\right\|_{\ell_{2}}^{2}-\frac{1}{2}\left\|A\left(\left(1-t\right)\overline{\omega} +t\omega-x\right)\right\|_{Y}^{2} \\
        & =\frac{\gamma-2\beta}{2}t^{2} \left\|\omega-\overline{\omega}\right\|_{\ell_{2}}^{2}+ t\left<\omega-\overline{\omega},\left(\gamma-2\beta\right)\overline{\omega}-Lx+ A^{*}\left(Ax-y^{\delta}\right)\right> \\
        & \quad +\frac{1}{2}\left\|y^{\delta}\right\|_{Y}^{2}+\frac{\gamma}{2}\left\|x\right\|_{\ell_{2}}^{2} +\frac{1}{2}\left\|Ax\right\|_{Y}^{2}+\left<\overline{\omega}, \frac{\gamma-2\beta}{2}\overline{\omega}-\gamma x+ A^{*}\left(Ax-y^{\delta}\right)\right>
    \end{align*}
    for $t\in [0,1]$ attains its minimum at $t=0$. Thus,
    \begin{equation*}
        f'\left(0+\right)=\left<\omega-\overline{\omega},\left(\gamma-2\beta\right)\overline{\omega}-\gamma x+ A^{*}\left(Ax-y^{\delta}\right)\right>\geq 0,
    \end{equation*}
     which leads to
    \begin{equation*}
        \left(\gamma-2\beta\right)\left<\omega-\overline{\omega}, \frac{\gamma}{\gamma-2\beta}x-\frac{1}{\gamma-2\beta} A^{*}\left(Ax-y^{\delta}\right)-\overline{\omega}\right>\leq 0.
    \end{equation*}
    By Lemma \ref{lem5.2}, this implies \eqref{equ5.6} holds.
    \par Conversely, assume $\overline{\omega}\in B_{R}$ satisfies \eqref{equ5.6}. By Lemma \ref{lem5.2}, we have
    \begin{equation*}
        \left<\omega-\overline{\omega},\frac{\gamma}{\gamma-2\beta}x -\frac{1}{\gamma-2\beta}A^{*}\left(Ax-y^{\delta}\right)-\overline{\omega}\right>\leq 0.
    \end{equation*}
    Define
    \begin{equation*}
        J\left(x\right):= \mathcal{S}_{\beta,\gamma}(\omega,x)= \frac{1}{2}\left\|A\omega-y^{\delta}\right\|_{Y}^{2}- \beta\left\|\omega\right\|_{\ell_{2}}^{2} +\frac{\gamma}{2}\left\|x-\omega\right\|_{\ell_{2}}^{2} -\frac{1}{2}\left\|Ax-A\omega\right\|_{Y}^{2}.
    \end{equation*}
    We find that
    \begin{equation*}
        J'\left(\omega\right)=\gamma \left(\omega-x\right)+A^{*}\left(Ax-y^{\delta}\right)-2\beta\omega.
    \end{equation*}
    Consequently,
    \begin{equation*}
        0\leq\left<J'\left(\overline{\omega}\right),\omega-\overline{\omega}\right> =\lim_{t\rightarrow 0^{+}} \frac{J\left(\overline{\omega}+ t\left(\omega-\overline{\omega}\right)\right) -J\left(\overline{\omega}\right)}{t}.
    \end{equation*}
    By assumption and Lemma \ref{lem5.4}, $\mathcal{S}_{\beta,\gamma}(\omega,x)$ is locally convex at $\overline{\omega}$. This implies that
    \begin{align*}
        J\left(x\right)-J\left(\overline{x}\right)
        & =\lim_{t\rightarrow 0^{+}} \frac{tJ\left(x\right)+\left(1-t\right) J\left(\overline{x}\right)-J\left(\overline{x}\right)}{t} \\
        & \geq \lim_{t\rightarrow 0^{+}} \frac{J\left(\overline{x}+ t\left(x-\overline{x}\right)\right) -J\left(\overline{x}\right)}{t} \\
        & =\left<J'\left(\overline{x}\right),x-\overline{x}\right>\geq 0
    \end{align*}
    for all $x\in B_{R}$. This concludes the proof of the lemma.
\end{proof}

Furthermore, the PG algorithm for problem \eqref{equ1.2} based on the surrogate function is detailed in Algorithm \ref{alg3}. To establish the convergence of Algorithm \ref{alg4}, we introduce specific conditions on the operator $A$ and the parameter $\gamma$.

\begin{algorithm}
\caption{PG-$\left(\ell_{1}^{2}-\eta\ell_{2}^{2}\right)$ algorithm based on SF for problem \eqref{equ1.4}}
\begin{algorithmic}\label{alg3}
\STATE{\textbf{Initialization}: Set $k=0$, $\mathbf{R}\in\mathbb{R}^{+}$, $x_{0}\in \ell_{2}$ such that $\Phi\left(x_{0}\right)<+\infty$.}
\STATE{\textbf{General step}: for any $k=0,1,2,\cdots$ execute the following steps:}
\STATE{\qquad 1: pick $\gamma>0$}
\STATE{\qquad 2: set
\begin{equation*}
    x^{k+1}=\mathbb{P}_{\mathbf{R}}\left(x^{k}+\frac{2\beta}{\gamma-2\beta}x^{k} -\frac{1}{\gamma-2\beta}A^{*}\left( Ax^{k}-y^{\delta}\right)\right)
\end{equation*}}
\STATE{\qquad 3: check the stopping criterion and return $x^{k+1}$ as a solution.}
\end{algorithmic}
\end{algorithm}

\begin{assumption}\label{ass5.6}
    Let $r:=\left\|A^{*}A\right\|_{L(\ell_{2},\ell_{2})}<1$. Assume that\\
    (a) $\left\|Ax\right\|_{Y}^{2}\leq\frac{\gamma r}{2}\left\|x\right\|_{\ell_{2}}^{2}$ for all $x\in\ell_{2}$.\\
    (b) $\gamma> 2\beta$.
\end{assumption}

\par In Assumption \ref{ass5.6}, we let $r:=\left\|A^{*}A\right\|_{L[\ell_{2},\ell_{2}]}<1$. In the classical theory of sparsity regularization, the value of $\left\|A^{*}A\right\|_{L[\ell_{2},\ell_{2}]}$ is assumed to be less than 1 (\cite{DDD04}). This requirement is still needed in this paper. If $r>1$ we need to re-scale the original ill-posed problem by $Ax=y \rightarrow \left(\frac{1}{c}A\right)x=\frac{1}{c}y$ so that $\frac{1}{c^{2}}\left\|A^{*}A\right\|_{L[\ell_{2},\ell_{2}]}<1$, where $c>1$.

\begin{lemma}\label{lem5.7}
    Let Assumption \ref{ass5.6} holds and the sequence $\left\{x^{k+1}\right\}$ is generated through the iterative process described in equation \eqref{equ5.6}. Then, it holds that
    \begin{equation*}
        \mathcal{D}_{\beta}^{\delta}\left(x^{k+1}\right)\leq \mathcal{D}_{\beta}^{\delta}\left(x^{k}\right)
    \end{equation*}
    and
    \begin{equation*}
        \lim_{k\rightarrow\infty}\left\|x^{k+1}-x^{k}\right\|_{\ell_{2}}=0.
    \end{equation*}
\end{lemma}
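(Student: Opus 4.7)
The plan is to leverage the surrogate-function machinery built in Lemma \ref{lem5.4} and Lemma \ref{lem5.5}, combined with Assumption \ref{ass5.6}, to derive a quantitative descent inequality that implies both claims at once.

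First I would exploit the identity $\mathcal{S}_{\beta,\gamma}(x,x) = \mathcal{D}_\beta^\delta(x)$, which is immediate from \eqref{equ5.5}. Since $x^k \in B_{\mathbf{R}}$ for $k \geq 1$ (being the image of the projection $\mathbb{P}_{\mathbf{R}}$), and since by Lemma \ref{lem5.5} the iterate $x^{k+1}$ minimizes $\omega \mapsto \mathcal{S}_{\beta,\gamma}(\omega, x^k)$ over $B_{\mathbf{R}}$, comparing to the competitor $\omega = x^k$ gives
\begin{equation*}
\mathcal{S}_{\beta,\gamma}(x^{k+1}, x^k) \leq \mathcal{S}_{\beta,\gamma}(x^k, x^k) = \mathcal{D}_\beta^\delta(x^k).
\end{equation*}
Next, unpacking $\mathcal{S}_{\beta,\gamma}(x^{k+1}, x^k)$ directly from \eqref{equ5.5} isolates a descent remainder:
\begin{equation*}
\mathcal{S}_{\beta,\gamma}(x^{k+1}, x^k) = \mathcal{D}_\beta^\delta(x^{k+1}) + \frac{\gamma}{2}\|x^{k+1} - x^k\|_{\ell_2}^2 - \frac{1}{2}\|A(x^{k+1} - x^k)\|_Y^2.
\end{equation*}
I would then invoke Assumption \ref{ass5.6}(a) applied to the vector $x^{k+1}-x^k$ to dominate the last term by $\frac{\gamma r}{4}\|x^{k+1}-x^k\|_{\ell_2}^2$, and chain with the preceding display to obtain
\begin{equation*}
\mathcal{D}_\beta^\delta(x^{k+1}) + \frac{\gamma(2-r)}{4}\|x^{k+1} - x^k\|_{\ell_2}^2 \leq \mathcal{D}_\beta^\delta(x^k).
\end{equation*}
Because $r < 1$, the coefficient $\gamma(2-r)/4$ is strictly positive, which is exactly the first claim.

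For the second claim I would telescope this inequality from $k = 1$ to $N$, yielding
\begin{equation*}
\frac{\gamma(2-r)}{4}\sum_{k=1}^{N}\|x^{k+1} - x^k\|_{\ell_2}^2 \leq \mathcal{D}_\beta^\delta(x^1) - \mathcal{D}_\beta^\delta(x^{N+1}).
\end{equation*}
The only remaining ingredient is a uniform lower bound on the right-hand side. Since $x^k \in B_{\mathbf{R}}$ implies $\|x^k\|_{\ell_2} \leq \|x^k\|_{\ell_1} \leq \mathbf{R}$, we have $\mathcal{D}_\beta^\delta(x^k) \geq -\beta \mathbf{R}^2$ uniformly in $k$, so the partial sums are bounded and therefore $\sum_k \|x^{k+1} - x^k\|_{\ell_2}^2 < \infty$, forcing $\|x^{k+1} - x^k\|_{\ell_2} \to 0$.

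The hard part is essentially conceptual rather than computational: one must recognize that Assumption \ref{ass5.6}(a) is precisely calibrated so that the surrogate gap $\mathcal{S}_{\beta,\gamma}(x^{k+1}, x^k) - \mathcal{D}_\beta^\delta(x^{k+1})$ becomes a strictly positive multiple of $\|x^{k+1} - x^k\|_{\ell_2}^2$, thereby converting the minimizing property of Lemma \ref{lem5.5} into a quantitative descent that absorbs the indefinite term $-\beta\|\cdot\|_{\ell_2}^2$ in $\mathcal{D}_\beta^\delta$. Once this observation is in place, the remaining work is a routine telescoping argument combined with the trivial $\ell_1$-ball bound.
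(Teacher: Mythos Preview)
Your proposal is correct and follows essentially the same surrogate-function argument as the paper: both use Lemma~\ref{lem5.5} to get $\mathcal{S}_{\beta,\gamma}(x^{k+1},x^k)\leq\mathcal{D}_\beta^\delta(x^k)$, then apply Assumption~\ref{ass5.6}(a) to extract the descent quantity $\frac{\gamma(2-r)}{4}\|x^{k+1}-x^k\|_{\ell_2}^2$, and finally telescope using a uniform lower bound on $\mathcal{D}_\beta^\delta$. Your presentation is in fact slightly more streamlined, deriving the single descent inequality directly rather than routing separately through $\mathcal{S}_{\beta,\gamma}(x^{k+1},x^k)-\mathcal{S}_{\beta,\gamma}(x^{k+1},x^{k+1})$ as the paper does, and your lower bound $\mathcal{D}_\beta^\delta\geq -\beta\mathbf{R}^2$ via the $\ell_1$-ball constraint is cleaner than the paper's corresponding step.
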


\begin{proof}
    Utilizing Lemma \ref{lem5.5} in conjunction with the definition of $x^{k+1}$, being a minimizer of $\mathcal{S}_{\beta,\gamma}\left(x,x^{k}\right)$, we derive
    \begin{align*}
        \mathcal{D}_{\beta}^{\delta}\left(x^{k+1}\right)
        & \leq \mathcal{D}_{\beta}^{\delta}\left(x^{k+1}\right) +\frac{2-r}{2r}\left\|A\left(x^{k+1}-x^{k}\right)\right\|_{Y}^{2} \\
        & = \frac{1}{2}\left\|Ax^{k+1}-y^{\delta}\right\|_{Y}^{2}- \beta\left\|x^{k+1}\right\|_{\ell_{2}}^{2}+ \frac{1}{r}\left\|A\left(x^{k+1}-x^{k}\right)\right\|_{Y}^{2}- \frac{1}{2}\left\|A\left(x^{k+1}-x^{k}\right)\right\|_{Y}^{2} \\
        & = \frac{1}{2}\left\|Ax^{k+1}-y^{\delta}\right\|_{Y}^{2}- \beta\left\|x^{k+1}\right\|_{\ell_{2}}^{2}- \frac{1}{2}\left\|A\left(x^{k+1}-x^{k}\right)\right\|_{Y}^{2}+ \frac{\gamma}{2}\left\|x^{k+1}-x^{k}\right\|_{\ell_{2}}^{2} \\
        & = \mathcal{S}_{\beta,\gamma}\left(x^{k+1},x^{k}\right) 
        \leq \mathcal{S}_{\beta,\gamma}\left(x^{k},x^{k}\right)= \mathcal{D}_{\beta}^{\delta}\left(x^{k}\right).
    \end{align*}
    Furthermore, we obtain
    \begin{align*}
        \mathcal{S}_{\beta,\gamma}\left(x^{k+1},x^{k}\right)- \mathcal{S}_{\beta,\gamma}\left(x^{k+1},x^{k+1}\right)
        & = \frac{\gamma}{2}\left\|x^{k+1}-x^{k}\right\|_{\ell_{2}}^{2} -\frac{1}{2}\left\|A\left(x^{k+1}-x^{k}\right)\right\|_{Y}^{2} \\
        & = \frac{\gamma\left(2-r\right)}{4} \left\|x^{k+1}-x^{k}\right\|_{\ell_{2}}^{2}. 
    \end{align*}
    Consequently, we find
    \begin{align*}
        \sum_{k=0}^{N}\left\|x^{k+1}-x^{k}\right\|_{\ell_{2}}^{2}
        & \leq \frac{4}{\gamma\left(2-r\right)}\sum_{k=0}^{N} \left[\mathcal{S}_{\beta,\gamma}\left(x^{k+1},x^{k}\right)- \mathcal{S}_{\beta,\gamma}\left(x^{k+1},x^{k+1}\right)\right]  \\
        & \leq \frac{4}{\gamma\left(2-r\right)}\sum_{k=0}^{N} \left[\mathcal{S}_{\beta,\gamma}\left(x^{k},x^{k}\right)- \mathcal{S}_{\beta,\gamma}\left(x^{k+1},x^{k+1}\right)\right] \\
        & = \frac{4}{\gamma\left(2-r\right)} \left[\mathcal{S}_{\beta,\gamma}\left(x^{0},x^{0}\right)- \mathcal{S}_{\beta,\gamma}\left(x^{N+1},x^{N+1}\right)\right] \\
        & \leq \frac{4}{\gamma\left(2-r\right)} \left[\mathcal{S}_{\beta,\gamma}\left(x^{0},x^{0}\right)+ \beta\left\|x^{0}\right\|_{\ell_{2}}^{2}\right].
    \end{align*}
    Given that $\sum_{k=0}^{N}\left\|x^{k+1}-x^{k}\right\|_{\ell_{2}}^{2}$ is uniformly bounded concerning $N$, we conclude that the series $\sum_{k=0}^{\infty}\left\|x^{k+1}-x^{k}\right\|_{\ell_{2}}^{2}$ converges. This verifies the lemma.
\end{proof}

\begin{theorem}\label{the5.8}
    Let $\{x^{k}\}$ be the sequence generated by Algorithm \ref{alg4}. Then there exists a subsequence of $\{x^{k}\}$ that converges to a stationary point $x^{*}$ of \eqref{equ5.2}, meaning that $x^{*}$ satisfies
    \begin{equation*}
        \left<2\beta x^{*}-A^{*}\left(Ax^{*}-y^{\delta}\right), x-x^{*}\right>\leq 0
    \end{equation*}
    for $\forall x\in B_{\mathbf{R}}$, where $\mathbf{R}$ adheres to Morozov's discrepancy principle \eqref{equ2.2}.
\end{theorem}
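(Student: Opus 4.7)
The plan is to combine the descent property and vanishing increments given by Lemma~\ref{lem5.7} with the variational characterization of the projected gradient step in Lemma~\ref{lem5.5}, and then pass to the limit along a weakly convergent subsequence to recover the stationarity condition stated in Lemma~\ref{lem5.3}.

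First I would observe that $\{x^k\}\subset B_{\mathbf{R}}$ is bounded in $\ell_2$ (since $\|x\|_{\ell_2}\le\|x\|_{\ell_1}\le \mathbf{R}$). By the Banach--Alaoglu theorem, there is a subsequence $x^{k_j}\weak x^*$. Because $\{x:\|x\|_{\ell_1}\le \mathbf{R}\}$ is weakly closed in $\ell_2$ (weak convergence in $\ell_2$ forces componentwise convergence, and Fatou's lemma on the $\ell_1$-norm controls the sum), $x^*\in B_{\mathbf{R}}$. From Lemma~\ref{lem5.7} we also have $\|x^{k+1}-x^k\|_{\ell_2}\to 0$, so $x^{k_j+1}\weak x^*$ as well.

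Next I would use Lemma~\ref{lem5.5} to turn the update rule into the variational inequality
\begin{equation*}
\left\langle \omega-x^{k+1},\ \gamma(x^{k}-x^{k+1})+2\beta\,x^{k+1}-A^{*}(Ax^{k}-y^{\delta})\right\rangle\le 0\qquad\forall\,\omega\in B_{\mathbf{R}}.
\end{equation*}
Using $A^{*}(Ax^{k}-y^{\delta})=A^{*}(Ax^{k+1}-y^{\delta})+A^{*}A(x^{k}-x^{k+1})$, this rearranges to
\begin{equation*}
\left\langle \omega-x^{k+1},\ 2\beta\,x^{k+1}-A^{*}(Ax^{k+1}-y^{\delta})\right\rangle\le \left\langle \omega-x^{k+1},\ (A^{*}A-\gamma I)(x^{k}-x^{k+1})\right\rangle,
\end{equation*}
whose right-hand side is bounded in absolute value by $\|\omega-x^{k+1}\|_{\ell_2}\,\|A^{*}A-\gamma I\|\,\|x^{k}-x^{k+1}\|_{\ell_2}$, and hence tends to $0$ along $k_j$ by Lemma~\ref{lem5.7}. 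Taking $\limsup_{j\to\infty}$ of both sides and exploiting that $Ax^{k_j+1}\weak Ax^{*}$ (since $A$ is bounded linear) together with the weak convergence $x^{k_j+1}\weak x^{*}$, the limit identifies $x^{*}$ with the inequality $\langle \omega-x^{*},\, 2\beta x^{*}-A^{*}(Ax^{*}-y^{\delta})\rangle\le 0$, which by Lemma~\ref{lem5.3} is exactly the required stationarity condition.

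The main obstacle is the passage to the limit in step three, because after expanding the inner product the quadratic terms $-2\beta\|x^{k_j+1}\|_{\ell_2}^{2}$ and $\|Ax^{k_j+1}\|_{Y}^{2}$ are \emph{not} continuous under weak convergence and the signs of their lower-semicontinuous estimates go in opposite directions. I would close this gap by promoting the subsequence to a strongly convergent one: the quantity $\mathcal{S}_{\beta,\gamma}(x^{k_j+1},x^{k_j})-\mathcal{S}_{\beta,\gamma}(x^{k_j+1},x^{k_j+1})=\tfrac{\gamma(2-r)}{4}\|x^{k_j+1}-x^{k_j}\|_{\ell_2}^{2}$ already appeared in Lemma~\ref{lem5.7}; combined with the descent of $\mathcal{D}_{\beta}^{\delta}$ to a limit $D^{*}$ and the identity $\mathcal{S}_{\beta,\gamma}(x^{k_j+1},x^{k_j})=\mathcal{D}_{\beta}^{\delta}(x^{k_j+1})+\tfrac{2-r}{2r}\|A(x^{k_j+1}-x^{k_j})\|_{Y}^{2}$, one can show $\mathcal{D}_{\beta}^{\delta}(x^{k_j+1})\to \mathcal{D}_{\beta}^{\delta}(x^{*})$, which together with weak lower semicontinuity of $\|\cdot\|_{\ell_2}^{2}$ and $\|A\cdot-y^{\delta}\|_{Y}^{2}$ forces $\|x^{k_j+1}\|_{\ell_2}\to\|x^{*}\|_{\ell_2}$. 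Radon--Riesz then upgrades the subsequence to strong convergence, at which point every term in the expanded variational inequality passes to the limit by continuity and the proof concludes.
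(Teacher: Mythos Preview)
Your overall architecture---extract a convergent subsequence, use the projection/variational inequality from Lemma~\ref{lem5.5}, kill the increment term via Lemma~\ref{lem5.7}, and pass to the limit to recover the stationarity condition of Lemma~\ref{lem5.3}---matches the paper's proof. The paper, however, works in the finite-dimensional setting of Algorithm~\ref{alg4} (note $B_{\mathbf R}\subset\mathbb{R}^n$ in \eqref{equ5.2} and $x_0\in\mathbb{R}^n$ in the initialization), so boundedness of $\{x^k\}$ immediately yields a \emph{strongly} convergent subsequence $x^{k_i}\to x^*$. With strong convergence in hand, $Ax^{k_i}\to Ax^*$ and every term in the variational inequality passes to the limit by continuity; the whole weak-convergence detour is unnecessary.

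Your attempt to run the argument in infinite-dimensional $\ell_2$ via weak compactness is more ambitious, and you correctly diagnosed the obstruction: the quadratic terms $-2\beta\|x^{k_j+1}\|_{\ell_2}^2$ and $\|Ax^{k_j+1}\|_Y^2$ are not weakly continuous and their semicontinuity estimates point in opposite directions. But the proposed fix has a genuine gap. You assert that the descent $\mathcal{D}_\beta^\delta(x^k)\downarrow D^*$ together with the surrogate identity $\mathcal{S}_{\beta,\gamma}(x^{k_j+1},x^{k_j})=\mathcal{D}_\beta^\delta(x^{k_j+1})+\tfrac{2-r}{2r}\|A(x^{k_j+1}-x^{k_j})\|_Y^2$ lets you conclude $\mathcal{D}_\beta^\delta(x^{k_j+1})\to\mathcal{D}_\beta^\delta(x^*)$. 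These ingredients only tell you the values converge to $D^*$; they give no link between $D^*$ and $\mathcal{D}_\beta^\delta(x^*)$. Comparing against $x^*$ via the minimality $\mathcal{S}_{\beta,\gamma}(x^{k_j+1},x^{k_j})\le \mathcal{S}_{\beta,\gamma}(x^*,x^{k_j})$ does not help either, since the right-hand side carries the term $\tfrac{\gamma}{2}\|x^*-x^{k_j}\|_{\ell_2}^2$, which does not vanish under mere weak convergence. Without an additional hypothesis (finite dimensions, or compactness of $A$ to force $Ax^{k_j}\to Ax^*$ strongly and then argue separately), the Radon--Riesz upgrade is unsupported as written. In the paper's $\mathbb{R}^n$ setting, simply take the strongly convergent subsequence from the start and the limit passage is immediate.
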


\begin{proof}
    We begin by demonstrating the convergence of the inner iteration as follows
    \begin{equation*}
        x^{k+1}=\mathbb{P}_{\mathbf{R}}\left(x^{k}+\frac{2\beta}{\gamma-2\beta}x^{k} -\frac{1}{\gamma-2\beta}A^{*}\left(Ax^{k}-y^{\delta}\right)\right).
    \end{equation*}
    For any fixed $\mathbf{R}>0$, since $\{x^{k}\}\subseteq B_{\mathbf{R}}$ is bounded, there exists a subsequence $\{x^{k_{i}}\}$ of $\{x^{k}\}$ converging to an element $x^{*}\in B_{\mathbf{R}}$, i.e., $x^{k_{i}}\rightarrow x^{*}$ in $B_{\mathbf{R}}$. Given that $A$ is linear and bounded, we also have $Ax^{k_{i}}\rightarrow Ax^{*}$. By applying Lemma \ref{lem5.2} and considering the definition of $x^{k+1}$, for all $\omega\in\ B_{\mathbf{R}}$, we obtain
    \begin{equation*}
        \left<\frac{\gamma}{\gamma-2\beta}x^{k}-\frac{1}{\gamma-2\beta}A^{*}\left(Ax^{k}- y^{\delta}\right)-x^{k+1},\omega-x^{k+1}\right>\leq 0.
    \end{equation*}
    This implies
    \begin{equation}\label{equ5.7}
        \left<\frac{\gamma}{\gamma-2\beta}x^{k_{i}}-\frac{1}{\gamma-2\beta}A^{*}\left(Ax^{k_{i}} -y^{\delta}\right) -x^{k_{i}+1},\omega-x^{k_{i}+1}\right>\leq 0.
    \end{equation}
    Taking the limit as $i\rightarrow\infty$ in \eqref{equ5.7}, we find
    \begin{equation*}
        \lim_{i\rightarrow\infty}\left<\frac{\gamma}{\gamma-2\beta}x^{k_{i}}- \frac{1}{\gamma-2\beta}A^{*}\left(Ax^{k_{i}} -y^{\delta}\right) -x^{k_{i}+1},\omega-x^{k_{i}+1}\right>\leq 0.
    \end{equation*}
    Since $\left\|x^{k_{i}}-x^{k_{i}+1}\right\|_{\ell_{2}}\rightarrow 0$ as $i\rightarrow\infty$ and $\left\{\omega-x^{k_{i}+1}\right\}$ remains uniformly bounded for $\omega,x^{k_{i}+1}\in B_{\mathbf{R}}$, it follows that
    \begin{equation}\label{equ5.8}
        \lim_{i\rightarrow\infty}\left|\left<x^{k_{i}}-x^{k_{i}+1}, \omega-x^{k_{i}+1}\right>\right|=0.
    \end{equation}
    By combining \eqref{equ5.7} with \eqref{equ5.8}, we conclude that
    \begin{equation}\label{equ5.9}
        \lim_{i\rightarrow\infty}\left<\frac{2\beta}{\gamma}x^{k_{i}}- \frac{1}{\gamma} A^{*}\left(Ax^{k_{i}} -y^{\delta}\right),x-x^{k_{i}+1}\right>\leq 0.
    \end{equation}
    As $x^{k_{i}}\rightarrow x^{*}$ for $i\rightarrow\infty$, it follows from \eqref{equ5.9} that 
    \begin{equation*}
        \left<2\beta x^{*}- A^{*}\left(Ax^{*} -y^{\delta}\right),x-x^{*}\right>\leq 0.
    \end{equation*}
    By invoking Lemma \ref{lem5.3}, we establish that $x^{*}$ is a stationary point of $\mathcal{D}_{\beta}^{\delta}\left(x\right)$ on $B_{\mathbf{R}}$ for any $\gamma>0$. Furthermore, $x^{*}$ emerges as a stationary point of $\mathcal{D}_{\beta}^{\delta}\left(x\right)$ on $B_{\mathbf{R}}$, where $\mathbf{R}$ adheres to Morozov’s discrepancy principle. This completes the proof of the theorem.
\end{proof}

\par Finally, we will demonstrate the stability of Algorithm \ref{alg3}.

\begin{theorem}\label{the5.9}
    Let $x_{\delta_{n}}^{k_{i}}\rightarrow x_{\delta_{n}^{*}}$ where $x_{\delta_{n}}^{*}$ denotes a stationary point of the optimization problem \eqref{equ5.2}. In this context, $y^{\delta}$ is substituted by  $y^{\delta_{n}}$, and a subsequence $\left\{x_{\delta_{n}}^{k_{i}}\right\}$ of $\left\{x_{\delta_{n}}^{k}\right\}$ is generated through Algorithm \ref{alg4}. Furthermore, assume that $\delta_{n}\rightarrow\delta$ as $n\rightarrow\infty$. It follows that there exists a subsequence of $\left\{x_{\delta_{n}}^{*}\right\}$, still denoted by $\left\{x_{\delta_{n}}^{*}\right\}$, such that $x_{\delta_{n}}^{*}\rightarrow x^{*}$. If $x^{*}\neq 0$, it qualifies as a stationary point of the problem \eqref{equ5.2}. Moreover, if $x^{*}$ is unique, then $\lim_{n\rightarrow\infty}\left\|x_{\delta_{n}}^{*}-x^{*}\right\|_{\ell_{2}}=0$.
\end{theorem}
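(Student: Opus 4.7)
\textbf{Proof proposal for Theorem \ref{the5.9}.}
The plan is to mirror the classical stability argument: use boundedness to extract a convergent subsequence, pass to the limit in the first-order variational characterization from Lemma \ref{lem5.3}, and close with the standard subsequence-subsequence trick when the limit is unique.

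First I would note that every iterate produced by Algorithm \ref{alg3} lies in $B_{\mathbf{R}}$ because of the projection $\mathbb{P}_{\mathbf{R}}$, and since $B_{\mathbf{R}}$ is closed in $\ell_{2}$, its limit $x_{\delta_{n}}^{*}$ lies in $B_{\mathbf{R}}$ as well. Because $\|x\|_{\ell_{2}}\leq\|x\|_{\ell_{1}}\leq\mathbf{R}$ throughout $B_{\mathbf{R}}$, the sequence $\{x_{\delta_{n}}^{*}\}$ is bounded in $\ell_{2}$, so a relabeled subsequence converges weakly to some $x^{*}\in B_{\mathbf{R}}$. I would then upgrade to strong convergence by the same mechanism used in the proof of Theorem \ref{the5.8}, combining the telescoping identity of Lemma \ref{lem5.7} with the continuity of $A$, to obtain $x_{\delta_{n}}^{*}\to x^{*}$ in $\ell_{2}$.

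Next, by Lemma \ref{lem5.3} each $x_{\delta_{n}}^{*}$ satisfies
\begin{equation*}
    \bigl\langle 2\beta x_{\delta_{n}}^{*}-A^{*}(Ax_{\delta_{n}}^{*}-y^{\delta_{n}}),\; x-x_{\delta_{n}}^{*}\bigr\rangle\leq 0 \qquad \forall x\in B_{\mathbf{R}}.
\end{equation*}
Since $A$ is linear bounded, $Ax_{\delta_{n}}^{*}\to Ax^{*}$ in $Y$, and by hypothesis $\|y^{\delta_{n}}-y^{\delta}\|_{Y}\to 0$. Strong convergence $x_{\delta_{n}}^{*}\to x^{*}$ therefore lets me pass to the limit term-by-term, yielding
\begin{equation*}
    \bigl\langle 2\beta x^{*}-A^{*}(Ax^{*}-y^{\delta}),\; x-x^{*}\bigr\rangle\leq 0 \qquad \forall x\in B_{\mathbf{R}}.
\end{equation*}
Invoking Lemma \ref{lem5.3} in the reverse direction (which is where the hypothesis $x^{*}\neq 0$ is needed so that Morozov's discrepancy principle selects a nondegenerate radius $\mathbf{R}$ in the limit problem) identifies $x^{*}$ as a stationary point of \eqref{equ5.2} with data $y^{\delta}$. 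For the final assertion, if the stationary point $x^{*}$ is unique then every subsequence of $\{x_{\delta_{n}}^{*}\}$ admits, by the same extraction, a further subsequence converging to $x^{*}$; the standard subsequence-subsequence principle then forces $\lim_{n\to\infty}\|x_{\delta_{n}}^{*}-x^{*}\|_{\ell_{2}}=0$.

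The main obstacle I anticipate is the first step: upgrading weak convergence to strong convergence in $\ell_{2}$. The $\ell_{1}$-ball $B_{\mathbf{R}}$ is bounded but not compact in $\ell_{2}$, so weak does not yield strong for free. Strong convergence is precisely what is required to pass to the limit in the quadratic piece $\langle A^{*}Ax_{\delta_{n}}^{*},\,x-x_{\delta_{n}}^{*}\rangle$ (all other terms are linear in $x_{\delta_{n}}^{*}$ and survive weak limits). I expect to overcome this either by exploiting sparsity of the limit points (reducing the essential action to a finite-dimensional subspace on which weak and strong convergence coincide) or by the Radon-Riesz-type reasoning developed in Lemma \ref{lem2.9}, exactly as was done to secure convergence of the inner iteration in Theorem \ref{the5.8}.
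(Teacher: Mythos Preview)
Your overall strategy---extract a convergent subsequence from boundedness in $B_{\mathbf{R}}$, pass to the limit in the variational inequality of Lemma~\ref{lem5.3}, then apply the subsequence--subsequence principle when the limit is unique---is exactly what the paper does. The paper's proof is in fact briefer than yours: it simply asserts that boundedness of $\{x_{\delta_{n}}^{*}\}\subset B_{\mathbf{R}}$ yields a convergent subsequence $x_{\delta_{n}}^{*}\to x^{*}$, takes the limit in the inequality, and cites Lemma~\ref{lem5.2} to conclude.

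Your added caution about the weak-versus-strong upgrade is well placed---in infinite-dimensional $\ell_{2}$ the $\ell_{1}$-ball is not norm-compact, and the quadratic pieces $-2\beta\|x_{\delta_{n}}^{*}\|_{\ell_{2}}^{2}$ and $\|Ax_{\delta_{n}}^{*}\|_{Y}^{2}$ do not survive weak limits---but neither mechanism you propose actually delivers it. Lemma~\ref{lem5.7}'s telescoping estimate controls $\|x^{k+1}-x^{k}\|_{\ell_{2}}$ for the \emph{inner} iteration at \emph{fixed} data $y^{\delta}$; it says nothing about the cross-data sequence $\{x_{\delta_{n}}^{*}\}_{n}$. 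The Radon--Riesz argument of Lemma~\ref{lem2.9} requires $\mathcal{R}_{\eta}(x_{\delta_{n}}^{*})\to\mathcal{R}_{\eta}(x^{*})$ as input, which is neither assumed nor derivable from the constrained problem~\eqref{equ5.2} (that formulation does not involve $\mathcal{R}_{\eta}$ at all). The paper simply treats ``bounded $\Rightarrow$ convergent subsequence'' as immediate---an implicitly finite-dimensional reading---so the difficulty you flag is a gap the paper itself leaves open rather than a divergence from its argument.
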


\begin{proof}
    By virtue of the aforementioned assumption, $x_{\delta_{n}}^{*}$ is a stationary point associated with the problem \eqref{equ5.2} where $y^{\delta}$ has been substituted by $y^{\delta_{n}}$. Consequently, we can assert
    \begin{equation}\label{equ5.10}
        \left<2\beta x_{\delta_{n}}^{*}-A^{*}\left(Ax_{\delta_{n}}^{*}-y^{\delta}\right), x-x_{\delta_{n}}^{*}\right>\leq 0
    \end{equation}
    for $\forall x\in B_{\mathbf{R}}$. Since the sequence $\left\{x_{\delta_{n}}^{*}\right\}$ is bounded, it follows that a convergent subsequence exists, still denoted by $\left\{x_{\delta_{n}}^{*}\right\}$, converging to some element $x^{*}$, i.e., $x_{\delta_{n}}^{*}\rightarrow x^{*}$. Taking the limit as $n\rightarrow\infty$ in \eqref{equ5.10} leads to
    \begin{equation}\label{equ5.11}
        \left<2\beta x^{*}-A^{*}\left(Ax^{*}-y^{\delta}\right), x-x^{*}\right>\leq 0
    \end{equation}
    for $\forall x\in B_{R}$. By applying Lemma \ref{lem5.2}, this indicates that $x^{*}$ is indeed a stationary point of the problem \eqref{equ5.2}. In cases where $x^{*}$ is unique, it can be inferred that every subsequence $\left\{x_{\delta_{n}}^{*}\right\}$ converges to $x^{*}$. Thus, we establish that $\lim_{n\rightarrow\infty}\left\|x_{\delta_{n}}^{*}-x^{*} \right\|_{\ell_{2}}=0$.
\end{proof}

\subsection{Determination of the radius $\mathbf{R}$}\label{sec5.2}

\par In this section, we examine certain properties of $\ell_{2}$-projections onto the $\ell_{1}$-ball. Specifically, we delve into the relationships with the projection operator and the function defined in Defintion \ref{def4.5}.

\begin{lemma}\label{lem5.10}
    For any fixed $a\in\ell_{2}$ and $\alpha>0$, the quantity $\left\|\mathbb{H}_{\alpha}\left(a\right)\right\|_{\ell_{1}}$ is a continuous and decreasing function of $\alpha$. Furthermore, it holds that $\left\|\mathbb{H}_{\alpha}\left(a\right)\right\|_{\ell_{1}}\rightarrow\left\|a\right\|_{\ell_{1}}$ as $\alpha\rightarrow 0$, and  $\left\|\mathbb{H}_{\alpha}\left(a\right)\right\|_{\ell_{1}}\rightarrow 0$ as $\alpha\rightarrow\infty$.
\end{lemma}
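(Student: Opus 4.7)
The plan is to identify $\mathbb{H}_\alpha(a)$ as the unique minimizer $x_\alpha$ of the $1$-strongly convex functional $G_\alpha(x) := \frac{1}{2}\|x - a\|_{\ell_2}^2 + \alpha\|x\|_{\ell_1}^2$ on $\ell_2$, a variational description supplied by Lemmas \ref{lem4.3} and \ref{lem4.4}. With this characterization in hand, all four assertions reduce to standard perturbation arguments for parameterized strongly convex minimization, and the implicit $\mu^*$/$\lambda_i$ formula can be sidestepped entirely.

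For monotonicity, given $0 < \alpha_1 < \alpha_2$, I would add the two optimality inequalities $G_{\alpha_1}(x_{\alpha_1}) \leq G_{\alpha_1}(x_{\alpha_2})$ and $G_{\alpha_2}(x_{\alpha_2}) \leq G_{\alpha_2}(x_{\alpha_1})$; the quadratic data-fit parts cancel, leaving $(\alpha_2 - \alpha_1)(\|x_{\alpha_2}\|_{\ell_1}^2 - \|x_{\alpha_1}\|_{\ell_1}^2) \leq 0$, so $\alpha \mapsto \|x_\alpha\|_{\ell_1}$ is nonincreasing. The limit $\alpha \to \infty$ follows by comparing $x_\alpha$ with $0$: $\alpha\|x_\alpha\|_{\ell_1}^2 \leq G_\alpha(x_\alpha) \leq G_\alpha(0) = \frac{1}{2}\|a\|_{\ell_2}^2$. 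For $\alpha \to 0$ with $a \in \ell_1$, comparison with $x = a$ gives simultaneously $\|x_\alpha\|_{\ell_1} \leq \|a\|_{\ell_1}$ and $\|x_\alpha - a\|_{\ell_2}^2 \leq 2\alpha\|a\|_{\ell_1}^2 \to 0$; lower semi-continuity of $\|\cdot\|_{\ell_1}$ under $\ell_2$-convergence (a direct application of Lemma \ref{lem2.6} with the counting measure) then sandwiches $\|a\|_{\ell_1} \leq \liminf \|x_\alpha\|_{\ell_1} \leq \|a\|_{\ell_1}$. When $a \notin \ell_1$, I would invoke density of $\ell_1$ in $\ell_2$: for any $z \in \ell_1$, comparison with $z$ yields $\|x_\alpha - a\|_{\ell_2}^2 \leq \|z - a\|_{\ell_2}^2 + 2\alpha\|z\|_{\ell_1}^2$, so $x_\alpha \to a$ in $\ell_2$, and the same lsc inequality then forces $\liminf \|x_\alpha\|_{\ell_1} \geq \|a\|_{\ell_1} = +\infty$.

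For continuity at a fixed $\alpha \in (0, \infty)$, take $\alpha_n \to \alpha$. Combining the optimality $G_{\alpha_n}(x_{\alpha_n}) \leq G_{\alpha_n}(x_\alpha)$ with the $1$-strong convexity inequality $G_\alpha(x_{\alpha_n}) - G_\alpha(x_\alpha) \geq \frac{1}{2}\|x_{\alpha_n} - x_\alpha\|_{\ell_2}^2$ yields
\[
    \frac{1}{2}\|x_{\alpha_n} - x_\alpha\|_{\ell_2}^2 \leq (\alpha_n - \alpha)\bigl(\|x_\alpha\|_{\ell_1}^2 - \|x_{\alpha_n}\|_{\ell_1}^2\bigr),
\]
whose right-hand side is nonnegative by the monotonicity already established and tends to $0$ since $\|x_{\alpha_n}\|_{\ell_1}$ stays bounded on any neighborhood of $\alpha$ bounded away from $0$. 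Hence $x_{\alpha_n} \to x_\alpha$ in $\ell_2$, after which the lsc inequality $\|x_\alpha\|_{\ell_1} \leq \liminf \|x_{\alpha_n}\|_{\ell_1}$ and the optimality estimate $\|x_{\alpha_n}\|_{\ell_1}^2 \leq \|x_\alpha\|_{\ell_1}^2 + \frac{1}{2\alpha_n}\bigl(\|x_\alpha - a\|_{\ell_2}^2 - \|x_{\alpha_n} - a\|_{\ell_2}^2\bigr)$ close the sandwich. The main obstacle I anticipate is the $a \notin \ell_1$ case of the $\alpha \to 0$ limit, where the explicit $\mu^*$/$\lambda_i$ description is essentially intractable and the density-plus-lsc argument becomes indispensable; similarly, upgrading the $\ell_2$-convergence of $x_{\alpha_n}$ to $\|\cdot\|_{\ell_1}$-convergence in the continuity proof is delicate, since the $\ell_1$-norm is merely lower semi-continuous, not continuous, in the $\ell_2$-topology.
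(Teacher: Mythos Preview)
Your proposal is correct and takes a genuinely different route from the paper. The paper works directly with the explicit component formula $\|\mathbb{H}_\alpha(a)\|_{\ell_1}=\sum_i \frac{\lambda_i}{2\alpha+\lambda_i}|a_i|$ from Lemma~\ref{lem4.4} and argues the two limits by inspecting the coefficients $\frac{\lambda_i}{2\alpha+\lambda_i}$; it does not spell out the continuity or the monotonicity in $\alpha$, nor does it confront the implicit $\alpha$-dependence of the $\lambda_i$ through $\mu^*$. You instead bypass the formula entirely by recognizing $\mathbb{H}_\alpha(a)=\arg\min_x G_\alpha(x)$ and running the standard two-optimality-inequality trick for monotonicity, comparison with $x=0$ and $x=a$ (or $x=z\in\ell_1$) for the limits, and a strong-convexity perturbation bound for continuity. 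What this buys you is a self-contained argument that handles all four assertions uniformly, covers the $a\notin\ell_1$ case that the paper's componentwise limit would not obviously reach, and never touches the root-finding problem for $\mu^*$; what the paper's approach buys, in return, is brevity and a direct connection to the closed-form expression that is actually used in the algorithm. Your worry about upgrading $\ell_2$-convergence to $\ell_1$-norm convergence in the continuity step is well handled by your lsc/optimality sandwich, so no gap remains there.
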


\begin{proof}
    Given that $\lambda_{i}>0$  as stated in Lemma \ref{lem4.3}, we can express
    \begin{equation*}
        \left\|\mathbb{H}_{\alpha}\left(a\right)\right\|_{\ell_{1}}= \sum_{i=1}^{n}\frac{\lambda_{i}}{2\alpha+\lambda_{i}} \left|a_{i}\right|.
    \end{equation*}
    The value of $\left\|\mathbb{H}_{\alpha}\left(a\right)\right\|_{\ell_{1}}$ is contingent upon the positive coefficients of $\left|a_{i}\right|$, i.e.,
    \begin{equation*}
       \lim_{\alpha\rightarrow 0}\frac{\lambda_{i}}{2\alpha+\lambda_{i}}=1 \quad {\rm and} \quad \lim_{\alpha\rightarrow\infty} \frac{\lambda_{i}}{2\alpha+\lambda_{i}}=0.
    \end{equation*}
    It leads to the conclusion that
    \begin{equation*}
        \lim_{\alpha\rightarrow 0} \left\|\mathbb{H}_{\alpha}\left(a\right)\right \|_{\ell_{1}}=\left\|a\right\|_{\ell_{1}} \quad {\rm and} \quad \lim_{\alpha\rightarrow\infty}\left\|\mathbb{H}_{\alpha}\left(a\right)\right \|_{\ell_{1}}=0.
    \end{equation*}
    Thus, the lemma is established.
\end{proof}

\begin{figure}[htbp]
    \centering
    \includegraphics[scale=0.28]{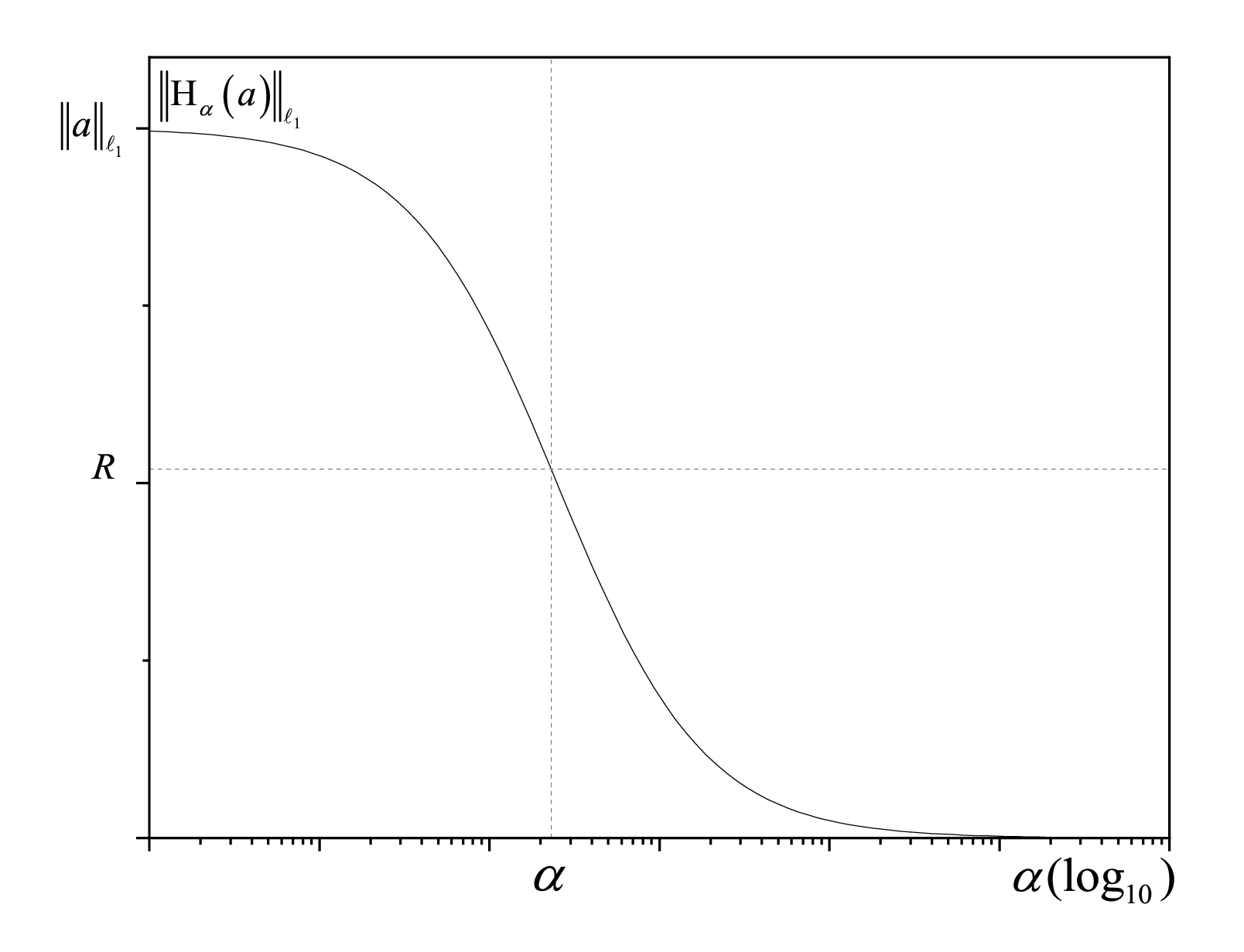}
    \caption{For a given vector $a\in\ell_{2}$, $\left\|\mathbb{H}_{\alpha}\left(a\right)\right\|_{\ell_{2}}$ is a continuous and decreasing function of $\alpha$.\ Finding an $\alpha$ such that $\left\|\mathbb{H}_{\alpha}\left(a\right)\right\|_{\ell_{1}}=\mathbf{R}$ ultimately comes down to a nonlinear interpolation. The figure is made for the finite-dimensional case.}
    \label{figure:2}
\end{figure}

\par A schematic representation is provided in Fig.\ \ref{figure:2}. The subsequent lemma demonstrates that the $\ell_{2}$ projection $\mathbb{P}_{\mathbf{R}}\left(a\right)$ can be effectively obtained by selecting an appropriate $\alpha$. 

\begin{lemma}\label{lem5.11}
     For $a\in\ell_{2}$, if $\left\|a\right\|_{\ell_{1}}>\mathbf{R}$, then the $\ell_{2}$ projection of $a$ onto the $\ell_{1}^{2}$ ball of radius $\mathbf{R}$ is given by $\mathbb{P}_{\mathbf{R}}\left(a\right)=\mathbb{H}_{\alpha}\left(a\right)$, where $\alpha$ is selected such that $\left\|\mathbb{H}_{\alpha}\left(a\right)\right\|_{\ell_{1}}=\mathbf{R}$. Conversely, if $\left\|a\right\|_{\ell_{1}}\leq\mathbf{R}$, then $\mathbb{P}_{\mathbf{R}}\left(a\right)=\lim_{\alpha\rightarrow 0}\mathbb{H}_{\alpha}\left(a\right)=a$.
\end{lemma}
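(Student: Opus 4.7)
\textbf{Proposal for Lemma \ref{lem5.11}.} The plan is to exploit the fact that, by Lemma \ref{lem4.4} and Definition \ref{def4.5}, the operator $\mathbb{H}_{\alpha}$ is precisely the proximal operator of $\alpha\|\cdot\|_{\ell_{1}}^{2}$, so for every $\alpha>0$
\begin{equation*}
  \mathbb{H}_{\alpha}(a)=\arg\min_{x\in\ell_{2}}\Bigl\{F_{\alpha}(x):=\tfrac{1}{2}\|x-a\|_{\ell_{2}}^{2}+\alpha\|x\|_{\ell_{1}}^{2}\Bigr\}.
\end{equation*}
This is the standard penalty-vs-constraint link, and combined with the monotonicity of $\alpha\mapsto\|\mathbb{H}_{\alpha}(a)\|_{\ell_{1}}$ from Lemma \ref{lem5.10} it will identify $\mathbb{H}_{\alpha}(a)$ with $\mathbb{P}_{\mathbf{R}}(a)$.

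First I would dispose of the easy case $\|a\|_{\ell_{1}}\leq\mathbf{R}$. Here $a\in B_{\mathbf{R}}$, so obviously $a$ is the unique minimizer of $x\mapsto\|x-a\|_{\ell_{2}}$ on $B_{\mathbf{R}}$, giving $\mathbb{P}_{\mathbf{R}}(a)=a$. The componentwise formula in Definition \ref{def4.5} shows that each coefficient $\lambda_{i}/(2\alpha+\lambda_{i})\to 1$ as $\alpha\to 0^{+}$, hence $\mathbb{H}_{\alpha}(a)\to a$ in $\ell_{2}$ (using dominated convergence if one wishes to be careful in the infinite-dimensional setting, since $|[\mathbb{H}_{\alpha}(a)]_{i}|\leq|a_{i}|$).

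For the main case $\|a\|_{\ell_{1}}>\mathbf{R}$, Lemma \ref{lem5.10} says that $\alpha\mapsto\|\mathbb{H}_{\alpha}(a)\|_{\ell_{1}}$ is continuous and decreasing, with limits $\|a\|_{\ell_{1}}>\mathbf{R}$ as $\alpha\to 0^{+}$ and $0<\mathbf{R}$ as $\alpha\to\infty$. By the intermediate value theorem there exists $\alpha^{*}>0$ with $\|\mathbb{H}_{\alpha^{*}}(a)\|_{\ell_{1}}=\mathbf{R}$. Set $x^{*}=\mathbb{H}_{\alpha^{*}}(a)$. For any competitor $y\in B_{\mathbf{R}}$, i.e. $\|y\|_{\ell_{1}}^{2}\leq\mathbf{R}^{2}=\|x^{*}\|_{\ell_{1}}^{2}$, the minimality of $x^{*}$ for $F_{\alpha^{*}}$ gives
\begin{equation*}
  \tfrac{1}{2}\|x^{*}-a\|_{\ell_{2}}^{2}+\alpha^{*}\mathbf{R}^{2}=F_{\alpha^{*}}(x^{*})\leq F_{\alpha^{*}}(y)=\tfrac{1}{2}\|y-a\|_{\ell_{2}}^{2}+\alpha^{*}\|y\|_{\ell_{1}}^{2}\leq\tfrac{1}{2}\|y-a\|_{\ell_{2}}^{2}+\alpha^{*}\mathbf{R}^{2}.
\end{equation*}
Cancelling $\alpha^{*}\mathbf{R}^{2}$ yields $\|x^{*}-a\|_{\ell_{2}}\leq\|y-a\|_{\ell_{2}}$ for all $y\in B_{\mathbf{R}}$, which by Definition \ref{def5.1} (and uniqueness of the $\ell_{2}$-projection onto the closed convex set $B_{\mathbf{R}}$) identifies $x^{*}$ with $\mathbb{P}_{\mathbf{R}}(a)$.

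The only real subtlety is the third paragraph, where the trick of bounding $\|y\|_{\ell_{1}}^{2}$ above by the equal quantity $\|x^{*}\|_{\ell_{1}}^{2}=\mathbf{R}^{2}$ is what lets the penalty term cancel; choosing $\alpha^{*}$ so that the constraint is active is precisely what makes this cancellation possible. Everything else is a direct appeal to Lemmas \ref{lem4.4}, \ref{lem5.2}, and \ref{lem5.10}, and no further estimates are needed.
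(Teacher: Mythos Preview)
Your proposal is correct and follows essentially the same route as the paper: both use Lemma~\ref{lem5.10} and the intermediate value theorem to pick $\alpha$ with $\|\mathbb{H}_{\alpha}(a)\|_{\ell_{1}}=\mathbf{R}$, then exploit that $\mathbb{H}_{\alpha}(a)$ minimizes $\tfrac{1}{2}\|x-a\|_{\ell_{2}}^{2}+\alpha\|x\|_{\ell_{1}}^{2}$ and cancel the penalty term against the active constraint to obtain the projection property. Your treatment of the easy case $\|a\|_{\ell_{1}}\leq\mathbf{R}$ is slightly more explicit than the paper's, but otherwise the arguments coincide.
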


\begin{proof}
    Assume $\left\|a\right\|_{\ell_{1}}>\mathbf{R}$. Notably, since $\left\|\mathbb{H}_{\alpha}\left(a\right)\right\|_{\ell_{1}}$ is continuous in $\alpha$ and it holds that $\lim_{\alpha\rightarrow \infty} \left\|\mathbb{H}_{\alpha}\left(a\right)\right\|_{\ell_{1}}=0$ according to Lemma \ref{lem5.10}, one can select $\alpha$ such that $\left\|\mathbb{H}_{\alpha}\left(a\right)\right\|_{\ell_{1}}=\mathbf{R}$ (refer to Fig.\ \ref{figure:2}). Furthermore, assuming $b=\mathbb{H}_{\alpha}\left(a\right)$ is the unique minimizer of the functional $\frac{1}{2}\left\|x-a\right\|^{2}+\alpha\left\|x\right\|_{\ell_{1}}^{2}$ as established in \cite{BA17}, we have
    \begin{equation*}
        \frac{1}{2}\left\|b-a\right\|^{2}+\alpha\left\|b\right\|_{\ell_{1}}^{2} \leq\frac{1}{2}\left\|x-a\right\|^{2}+\alpha\left\|x\right\|_{\ell_{1}}^{2}
    \end{equation*}
    for all $x\neq b$. Given that $\left\|b\right\|_{\ell_{1}}=\mathbf{R}$,  it follows that for $\forall x\in B_{\mathbf{R}}$ with $x\neq b$
    \begin{equation*}
        \left\|b-a\right\|_{\ell_{1}}^{2}\leq\left\|x-a\right\|_{\ell_{1}}^{2}.
    \end{equation*}
    Thus, we can conclude that $b$ is closer to $a$ than any other $x\in B_{R}$. In summary, this leads us to the conclusion that $\mathbb{P}_{R}\left(a\right)=b=\mathbb{H}_{\alpha}\left(a\right)$.
\end{proof}

\par From the preceding discussion, it is evident that problem \eqref{equ5.1} is equivalent to problem \eqref{equ5.2} for a specific $\mathbf{R}$. In practical applications, selecting an appropriate value for $\mathbf{R}$ is critical for computation. In the subsequent sections, we propose a strategy for determining the radius $\mathbf{R}$ of the $\ell_{1}^{2}$-ball constraint using Morozov's discrepancy principle.

\par According to Lemma \ref{lem5.11}, for a given $\alpha$ in \eqref{equ5.1}, the corresponding $\mathbf{R}$ in \eqref{equ5.2} should be selected such that $\left\|x_{\alpha,\beta}^{\delta}\right\|_{\ell_{1}}^{2}= \mathbf{R}^{2}$. However, the value of $\left\|x_{\alpha,\beta}^{\delta}\right\|_{\ell_{1}}^{2}$ is typically unknown before the initiation of the PG method outlined in \eqref{equ5.6}. Consequently, if an approximation of $x_{\alpha,\beta}^{\delta}$ is acquired through the HV-($\ell_{1}^{2}-\eta\ell_{2}^{2}$) algorithm, the resulting algorithm may no longer retain its accelerated properties. Thus, it becomes essential to determine the appropriate value of $\mathbf{R}$ accurately. Drawing inspiration from \cite{DH20} and \cite{DH23}, we apply Morozov's discrepancy principle to guide the selection of $\mathbf{R}$. 

\begin{algorithm}
\caption{PG-$\left(\ell_{1}^{2}-\eta\ell_{2}^{2}\right)$ algorithm under MDP based on SF}
\begin{algorithmic}\label{alg4}
\STATE{\textbf{Initialization}: Set $\mathbf{R}_{max},\mathbf{R}_{min}\in\mathbb{R}$ with $\mathbf{R}_{max}>\mathbf{R}_{min}$, $\tau_{1},\tau_{2}\in\mathbb{R}^{+}$ with $\tau_{2}>\tau_{1}$, and $x_{0}\in \mathbb{R}^{n}$ such that $\Phi\left(x_{0}\right)<+\infty$.}
\STATE{\textbf{General step}:}
\STATE{for any $j=0,1,2,\cdots$ execute the following steps:}
\STATE{\qquad $\mathbf{R}_{j}=\frac{\mathbf{R}_{max}+\mathbf{R}_{min}}{2}$;}
\STATE{\qquad for any $k=0,1,2,\cdots$ execute the following steps:}
\STATE{\qquad\qquad 1: pick $\gamma>0$ such that $\gamma>2\beta$;}
\STATE{\qquad\qquad 2: set
\begin{equation*}
    x^{k+1}=\mathbb{P}_{\mathbf{R}_{j}}\left(x^{k}+\frac{2\beta}{\gamma-2\beta}x^{k} -\frac{1}{\gamma-2\beta}A^{*}\left(Ax^{k}-y^{\delta}\right)\right);
\end{equation*}}
\STATE{\qquad\qquad 3: check the stopping criterion and return $x_{\alpha,\beta}^{\delta}=x^{k+1}$ as a solution.}
\STATE{\qquad end for}
\STATE{\qquad if $\left\|Ax_ {\alpha,\beta}^{\delta}-y^{\delta}\right\|_{Y}<\tau_{1}\delta$, then $\mathbf{R}_{max}=\mathbf{R}_{j}$;}
\STATE{\qquad if $\left\|Ax_ {\alpha,\beta}^{\delta}-y^{\delta}\right\|_{Y}>\tau_{2}\delta$, then
$\mathbf{R}_{min}=\mathbf{R}_{j}$;}
\STATE{end for}
\STATE{\qquad return $\mathbf{R}=\mathbf{R}_{j}$ and $x_{\alpha,\beta}^{\delta}=x^{k+1}$;}
\end{algorithmic}
\end{algorithm}

\par To facilitate our subsequent discourse, we will adopt a notational convention that involves the slight abuse of the symbol, whereby we denote the set  $\mathbf{R}^{2}$ simply as $\mathbf{R}$. For any given $\mathbf{R}$, it is imperative to verify whether the regularization parameter $\alpha$ fulfills Morozov's discrepancy principle, which requires
\begin{equation*}
    \tau_{1}\delta\leq \left\|Ax_{\alpha,\beta}^{\delta}-y^{\delta}\right\|_{Y}\leq \tau_{2}\delta,\quad \tau_{2}\geq \tau_{1}>1.
\end{equation*}
For any fixed $\mathbf{R}$, we compute $x_{\alpha,\beta}^{\delta}$ via Algorithm \ref{alg3}. As we know, the discrepancy $\left\|Ax_{\alpha,\beta}^{\delta}-y^{\delta}\right\|_{Y}$ behaves as an increasing function of $\alpha$. Since $\alpha$ is inversely related to $\mathbf{R}$, it follows that the discrepancy $\left\|Ax_{\alpha,\beta}^{\delta}-y^{\delta}\right\|_{Y}$ is a decreasing function of $\mathbf{R}$. To determine an appropriate value for the regularization parameter $\mathbf{R}$, we initiate the process by defining a larger upper bound $\mathbf{R}_{max}$ and a smaller lower bound $\mathbf{R}_{min}$. Let $\mathbf{R}=\left(\mathbf{R}_{max}+\mathbf{R}_{min}\right)/2$ and compute the regularization solution $x_{\alpha,\beta}^{\delta}$ utilizing Algorithm \ref{alg3}. If $\left\|Ax_ {\alpha,\beta}^{\delta}-y^{\delta}\right\|_{Y}<\tau_{1}\delta$, we update $\mathbf{R}_{max}=\mathbf{R}$; if $\left\|Ax_ {\alpha,\beta}^{\delta}-y^{\delta}\right\|_{Y}>\tau_{2}\delta$, we update $\mathbf{R}_{min}=\mathbf{R}$. This iterative process continues until the resulting solution satisfies Morozov's discrepancy principle, and the current value of $R$ is deemed the appropriate constraint radius for the regularization method employed. It establishes that the resultant $R$ obtained under these principles is the optimal approximation for the projection radius. Based on Morozov's discrepancy principle, the PG algorithm for problem (1.2) founded on SF is delineated in Algorithm \ref{alg4}. A natural criterion for terminating the iterations in Algorithm \ref{alg4} is based on the change in the iterative solutions. Specifically, if the absolute error between successive iterative solutions meets a predefined accuracy threshold or if the maximum number of iterations is attained, the iterative process may be deemed to have converged, thereby producing a minimizer.

\par Morozov's discrepancy principle is a pivotal methodology for determining the regularization parameter $\alpha$. When selected judiciously, it transforms the problem denoted by \eqref{equ1.4} into a regularization technique. Established literature indicates that Tikhonov functions yield a regularization method when integrated with Morozov's discrepancy principle. However, this assertion is predominantly substantiated when the regularization term is convex (\cite{AR10,R01,TLY98}). Some findings are reported in scenarios involving non-convex regularization terms in \cite{DH19,WLMC13}, wherein Morozov's discrepancy principle is employed to establish the convergence rate. Nevertheless, these results necessitate additional source conditions on the true solution $x^{\dagger}$. To the best of our knowledge, no conclusive evidence has been presented regarding the applicability of Morozov's discrepancy principle in conjunction with problem \eqref{equ1.4} as a regularization method. This paper aims to demonstrate that under specific properties of the non-convex regularization term, e.g., coercivity, weakly lower semi-continuity, and Radon-–Riesz property, the well-posedness of the regularized formulation remains intact.

\section{Numerical experiments}\label{sec6}

\par In this section, we present the results from two numerical experiments designed to demonstrate the effectiveness of the proposed algorithms. We analyze the influence of the parameter $\eta$ on the reconstruction of $x^{*}$ utilizing HV-$\left(\ell_{1}^{2}-\eta\ell_{2}^{2}\right)$ algorithm. Additionally, we assess the reconstruction quality and computational efficiency of the proposed algorithms, the HV-$\left(\ell_{1}^{2}-\eta\ell_{2}^{2}\right)$ and PG-SF algorithms, about other well-known algorithms, including ISTA as outlined in \cite{DDD04}, FISTA from \cite{BT09}, ST-$\left(\alpha\ell_{1}-\beta\ell_{2}\right)$ algorithm in \cite{DH24} and half thresholding algorithm for $\ell_{1/2}$ regularization (HT-$\ell_{1/2}$) discussed in \cite{XCXZ12}. The first example addresses a well-conditioned compressive sensing problem, while the second example focuses on an ill-conditioned image deblurring problem. All numerical experiments were conducted using MATLAB R2024a on a workstation equipped with an AMD Ryzen 5 4500U processor with Radeon Graphics operating at 2.38 GHz and 16GB of RAM.

\subsection{Well-conditioned compressive sensing with random Gaussian matrix}\label{sec6.1}

\par In the first example, we examine a commonly used random Gaussian matrix. The compressive sensing problem is defined as $A_{m\times n}x_{n}=y_{m}$, where $A_{m\times n}$ is a well-conditioned random Gaussian matrix generated by calling $A={\rm randn(m,n)}$ in Matlab. The exact data $y$ is produced by the equation $y=Ax^{\dagger}$, where the exact solution $x^{\dagger}$ represents an $s$-sparse signal supported on a random index set. White Gaussian noise is added to the exact data $y^{\dagger}$ by calling $y^{\delta}={\rm awgn}\left(Ax^{\dagger},\delta\right)$ in Matlab, where $\delta$ denotes the noise level, measured in dB, which quantifies the ratio between the true data (noise free) $y$ and Gaussian noise. The symbol $x^{*}$ signifies the reconstruction computed by the proposed algorithm. To evaluate the performance of the reconstruction $x^{*}$, we utilize the signal-to-noise ratio (SNR) and relative error (Rerror), defined as follows
\begin{equation*}
    {\rm SNR}:=-10\ {\rm log}_{10}\frac{\left\|x^{*}-x^{\dagger}\right\|_{\ell_{2}}^{2}} {\left\|x^{\dagger}\right\|_{\ell_{2}}^{2}}\quad {\rm and} \quad {\rm Rerror}:=\frac{\left\|x^{*}-x^{\dagger}\right\|_{\ell_{2}}} {\left\|x^{\dagger}\right\|_{\ell_{2}}} 
\end{equation*}
We set $n=200$, $m=0.4n$, $s=0.2m$. The value of $\left\|A_{m\times n}\right\|_{\ell_{2}}$ is approximately 22 and its condition numberis around 4. We rescale the matrix by adjusting it to $A_{m\times n}\rightarrow 0.04A_{m\times n}$ resulting in the $\ell_{2}$-norm for the rescale matrix of around 0.9. It is important to note that the condition number remains unchanged following the matrix rescaling. To compare the reconstruction effects of different iterative algorithms fairly, we set the maximum number of iterations to ${\rm maxiter} = 1500$ and $L^{k}=1$ for the HV-$\left(\ell_{1}^{2}-\eta\ell_{2}^{2}\right)$ algorithm, $\gamma=1$ for the PG-$\left(\ell_{1}^{2}-\eta\ell_{2}^{2}\right)$ algorithm, while $\lambda=1$ for the ST-$\left(\alpha\ell_{1}-\beta\ell_{2}\right)$ algorithm. The stopping criterion is that the error of the adjacent two iterative solutions is less than $10^{-5}$ or the maximum number of iterations is reached. The initial value $x^{0}$ is generated by calling $x^{0}=0.01{\rm ones}(n,1)$.

\par In Section \ref{sec3}, we employ the a priori rule or discrepancy principle to select the regularization parameter $\alpha$. However, in practical numerical implementations, accurately estimating the optimal value of $\alpha$ poses significant challenges. To address this issue, we apply the discrepancy principle to identify the regularization parameter $\alpha$ such that the residual norm for the regularized solution meets the condition $\left\|Ax^{*}-y^{\delta}\right\|_{Y}=\delta$. When a reliable estimate for the noise level $\delta$ is available, this approach effectively yields an appropriate regularization parameter.

\begin{table}[H]
\caption{{\rm SNR} of reconstruction $x^{*}$ utilizing HV-$\left(\ell_{1}^{2}-\eta\ell_{2}^{2}\right)$ algorithm for different values of $\eta$ and $\alpha$ with $\sigma=40$dB.}
\label{table:1}
\centering
\tabcolsep=0.35cm
\tabcolsep=0.01\linewidth
\scalebox{0.9}{
\begin{tabular}{ccccccc}
\hline
$\eta$ & \multicolumn{1}{c}{$\alpha=2.0\times10^{-5}$} & \multicolumn{1}{c}{$\alpha=4.0\times10^{-5}$} & \multicolumn{1}{c}{$\alpha=6.0\times10^{-5}$} & \multicolumn{1}{c}{$\alpha=8.0\times10^{-5}$} & \multicolumn{1}{c}{$\alpha=1.0\times10^{-4}$} & \multicolumn{1}{c}{$\alpha=1.2\times10^{-4}$}\\
\hline
0.0  & 19.2647 & 27.1406 & 30.6048 & 29.7776 & 29.3829 & 11.1724 \\
0.1  & 19.3353 & 27.2007 & 30.6362 & 29.8203 & 29.4218 & 11.1797 \\
0.2  & 19.4064 & 27.2636 & 30.6679 & 29.8636 & 29.4068 & 11.1868 \\
0.3  & 19.4778 & 27.3314 & 30.7000 & 29.9075 & 29.5000 & 11.1939 \\
0.4  & 19.5497 & 27.4136 & 30.7327 & 29.9522 & 29.5393 & 11.2010 \\
0.5  & 19.6221 & 27.5826 & 30.7659 & 29.9977 & 29.5788 & 11.2081 \\
0.6  & 19.6946 & 27.6737 & 30.8002 & 30.0445 & 29.6184 & 11.2151 \\
0.7  & 19.7677 & 27.7535 & 30.8358 & 30.0928 & 29.6581 & 11.2221 \\
0.8  & 19.8417 & 27.8350 & 30.8745 & 30.1438 & 29.6980 & 11.2291 \\
0.8  & 19.9166 & 27.9757 & 30.9269 & 30.2003 & 28.7380 & 11.2361\\
1.0  & \textbf{19.9924} & \textbf{28.0636} & \textbf{31.1792} & \textbf{30.6347} & \textbf{29.7782} & \textbf{11.2431}\\
\hline
\end{tabular}}
\end{table}

\par In the numerical experiments in this paper, the regularization parameter $\alpha$ is derived utilizing Hansen's Matlab tools (\cite{H07}) specifically through the mechanism designed to satisfy the discrepancy principle, as outlined in \cite{AR10,WLHC19}. It is important to note that the parameter $\alpha$ obtained via the discrepancy principle serves merely as an estimate of the optimal regularization parameter. To test the sensitivity of Algorithm \ref{alg2} concerning $\alpha$, we investigate several different regularization parameters presented in Table \ref{table:1} and \ref{table:3}.


\begin{figure}[htbp]
    \centering
    \subfigure[True signal]{\includegraphics[width=0.45\columnwidth,height=0.23\linewidth]{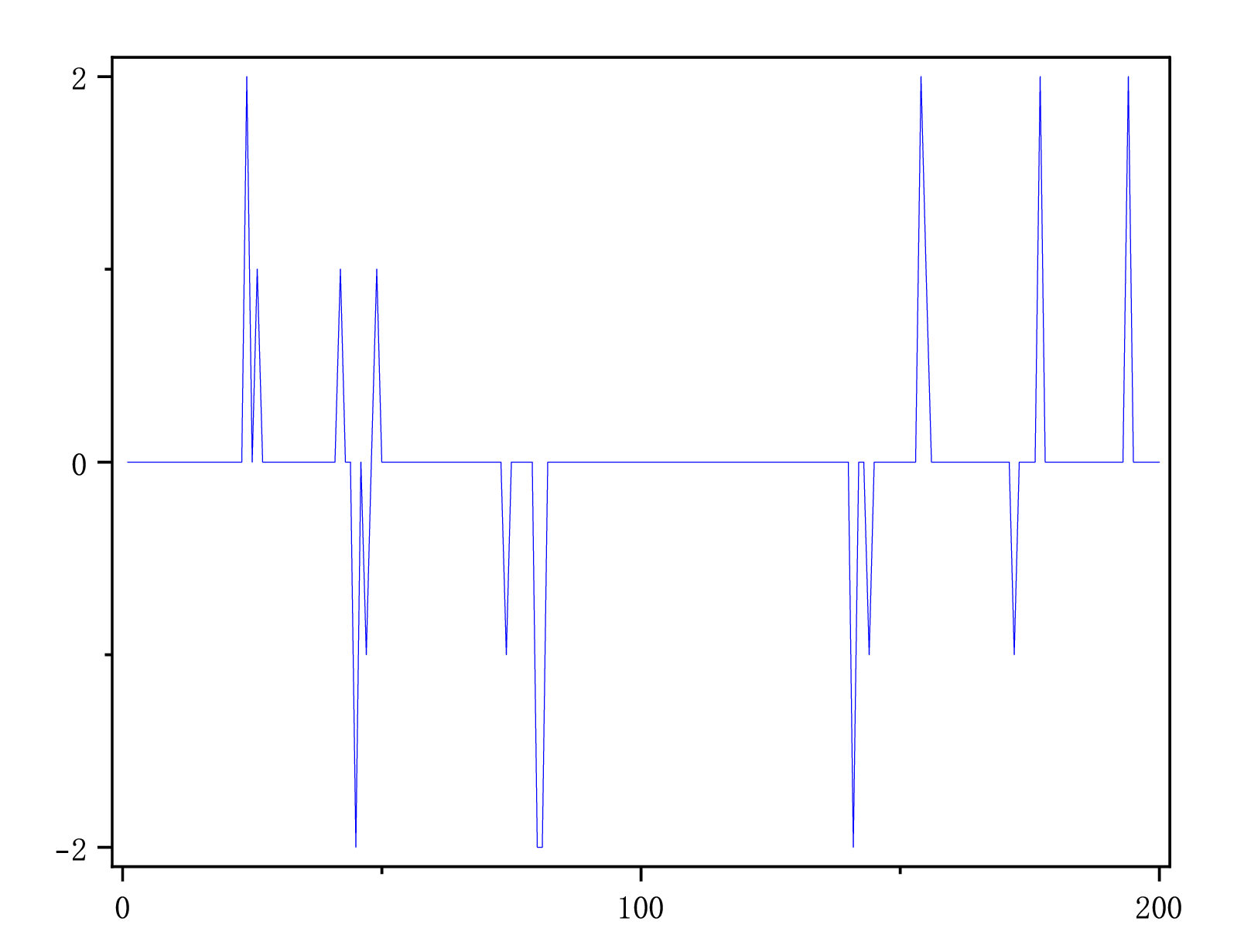}}
    \subfigure[True data and Observed data($\sigma$=40dB)]{\includegraphics[width=0.45\columnwidth,height=0.23\linewidth]{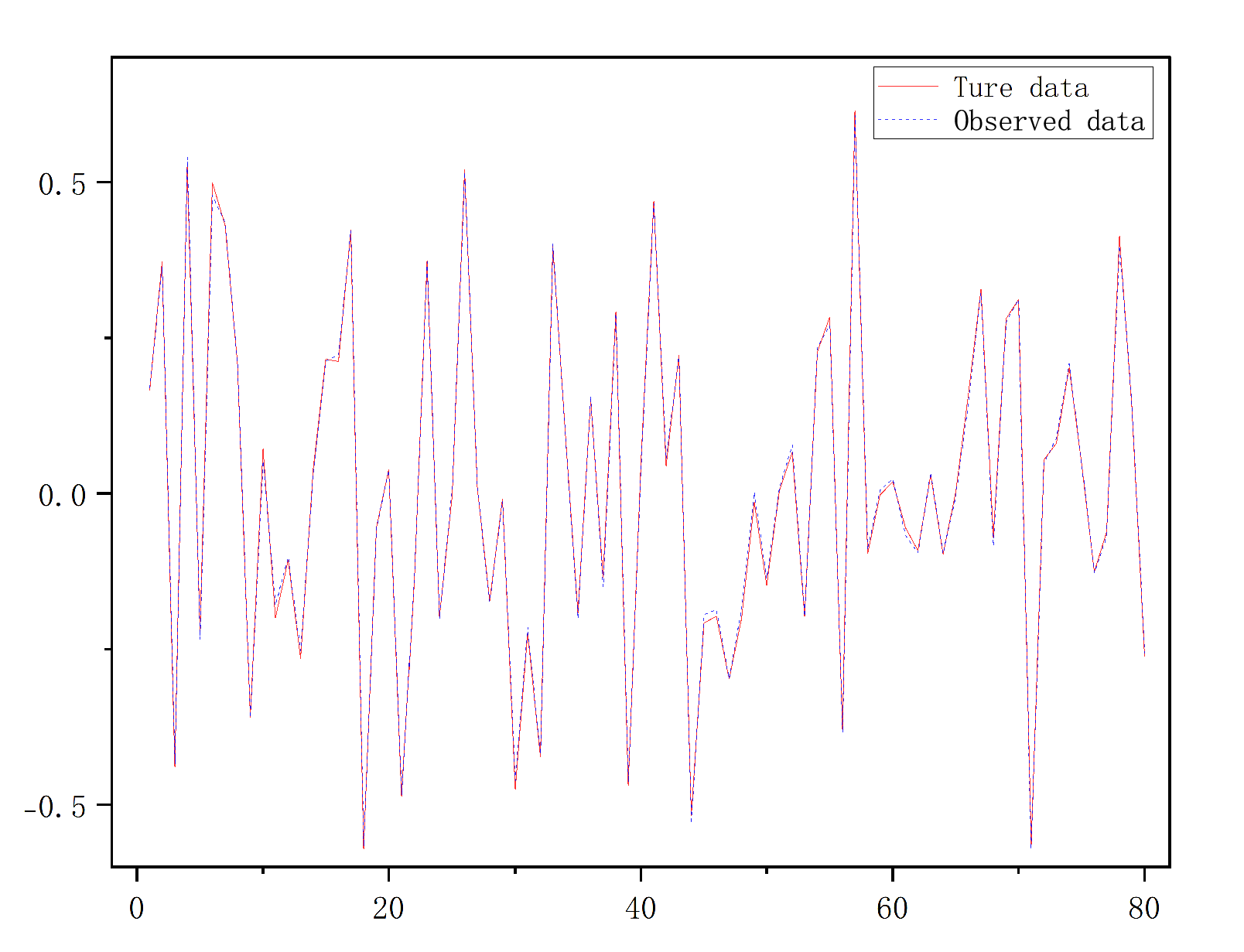}}\\
    \subfigure[ISTA, {\rm SNR} = 26.4883, Time = 13.73s ]{\includegraphics[width=0.45\columnwidth,height=0.22\linewidth]{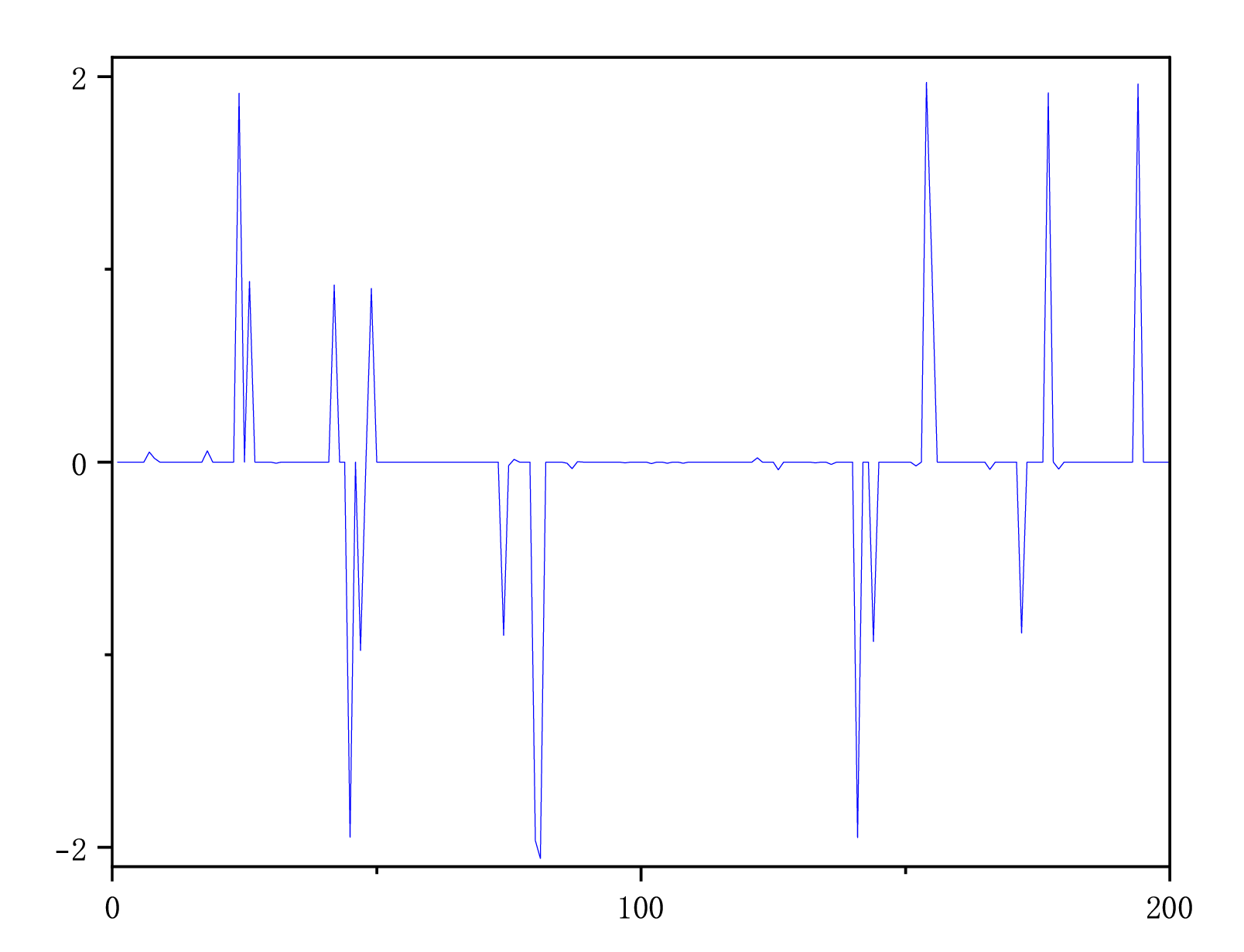}}
    \subfigure[FISTA, {\rm SNR} = 28.6235, Time = 0.50s ]{\includegraphics[width=0.45\columnwidth,height=0.22\linewidth]{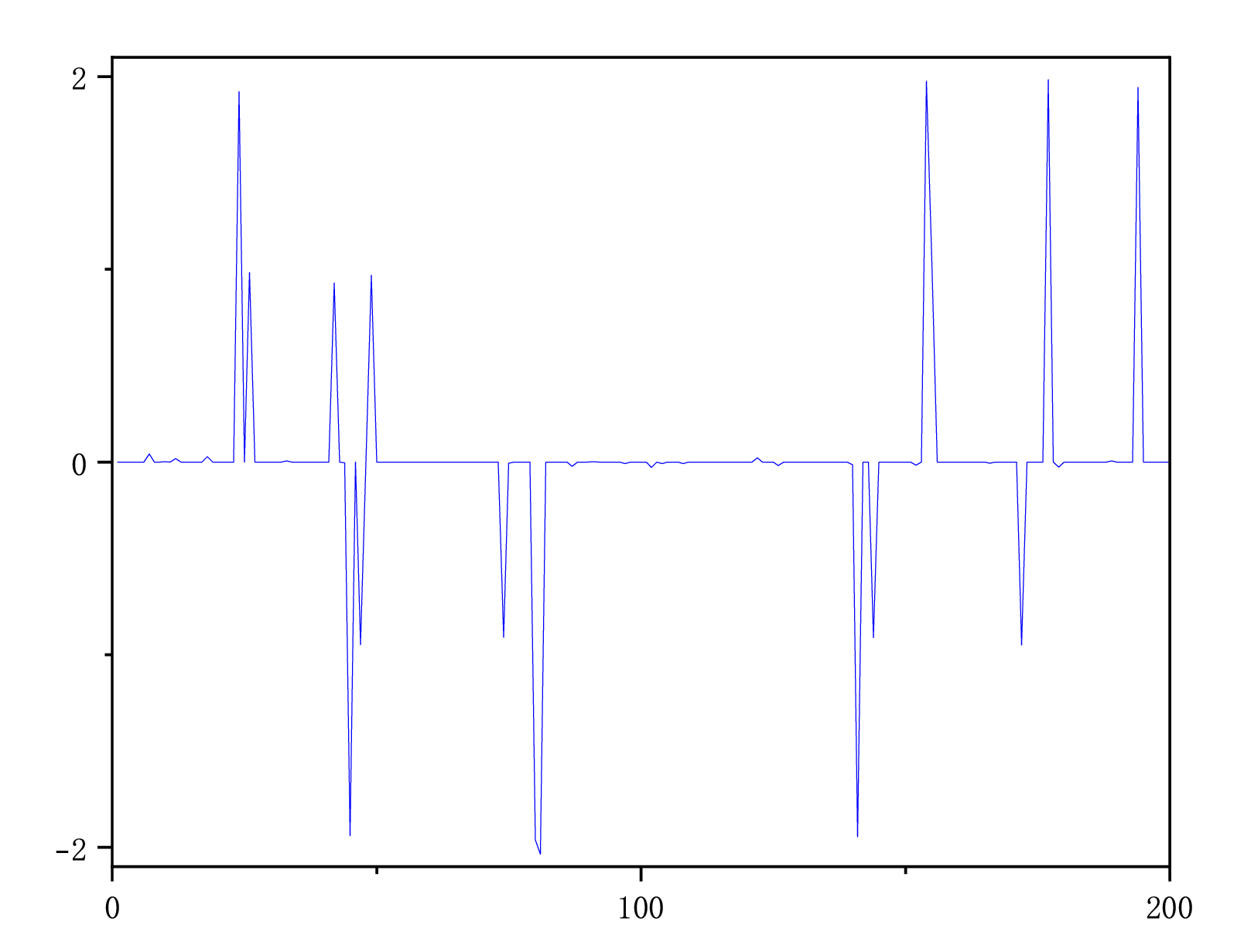}}\\
    \subfigure[ST-$\left(\alpha\ell_{1}-\beta\ell_{2}\right)$ algorithm, {\rm SNR} = 29.0615, Time = 14.20s ]{\includegraphics[width=0.45\columnwidth,height=0.22\linewidth]{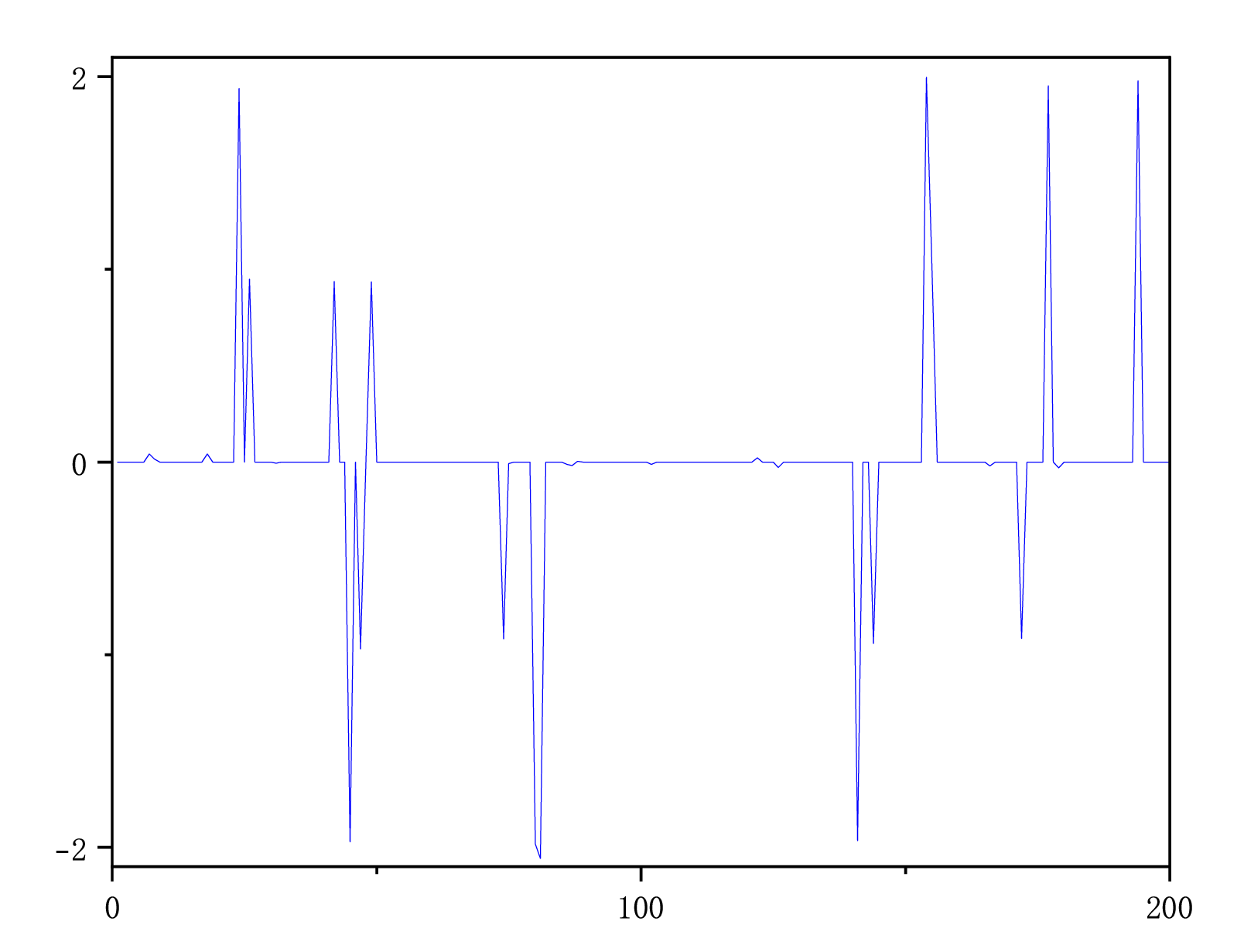}}
    \subfigure[HV-$\left(\ell_{1}^{2}-\eta\ell_{2}^{2}\right)$ algorithm, {\rm SNR} = 31.1791, Time = 282.73s ]{\includegraphics[width=0.45\columnwidth,height=0.22\linewidth]{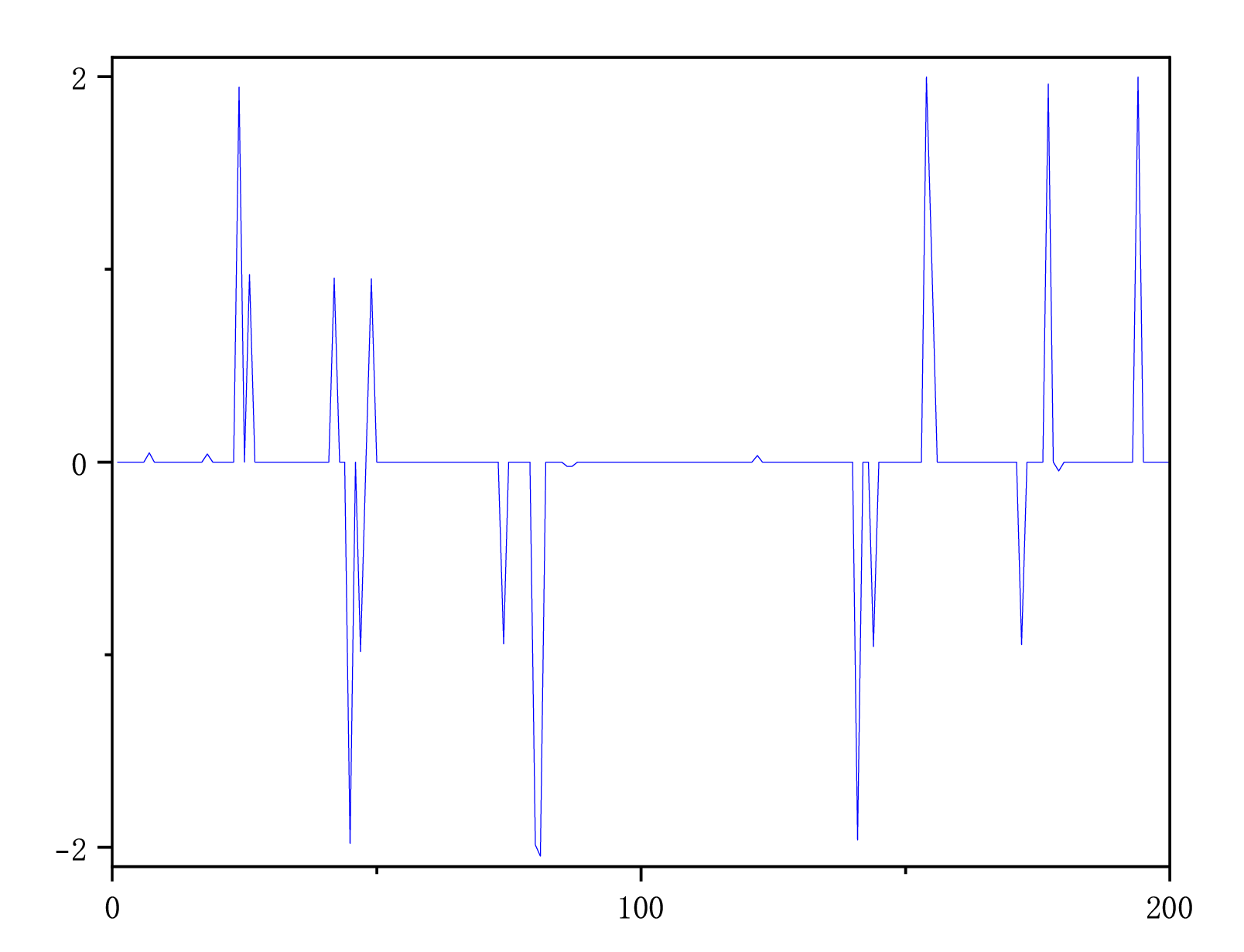}}\\
    \subfigure[HT-$\ell_{1/2}$ algorithm, {\rm SNR} = 33.9424, Time = 0.20s ]{\includegraphics[width=0.45\columnwidth,height=0.22\linewidth]{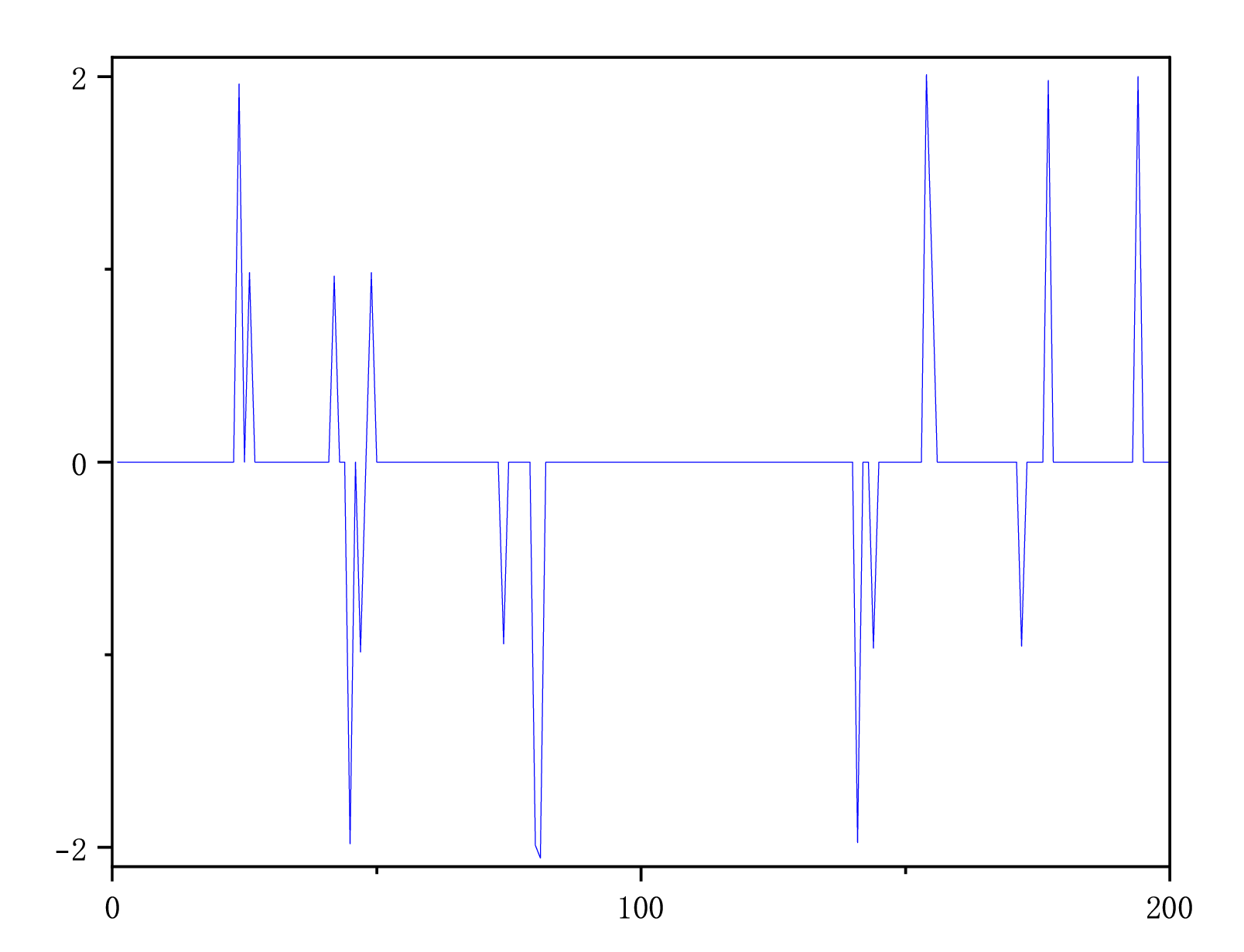}}
    \subfigure[PG-$\left(\ell_{1}^{2}-\eta\ell_{2}^{2}\right)$ algorithm, {\rm SNR} = 33.1090, Time = 0.04s ]{\includegraphics[width=0.45\columnwidth,height=0.22\linewidth]{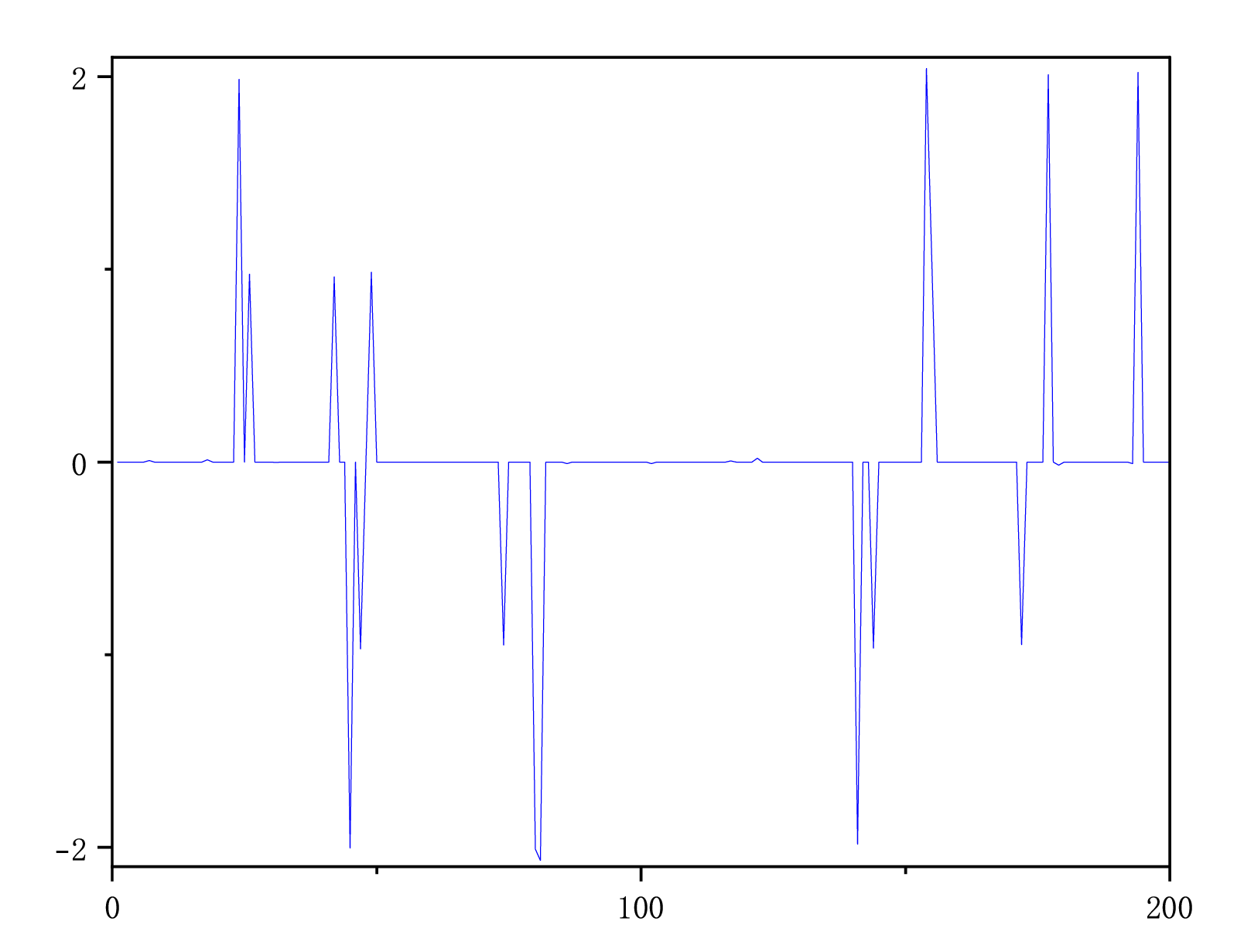}}
    \caption{(a) True signal.\ (b) True data and Observed data with $\delta$=40dB.\ (c)-(h) The reconstructed signal of ISTA, FISTA, ST-$\left(\alpha\ell_{1}-\beta\ell_{2}\right)$, HV-$\left(\ell_{1}^{2}-\eta\ell_{2}^{2}\right)$, HT-$\ell_{1/2}$ and PG-$\left(\ell_{1}^{2}-\eta\ell_{2}^{2}\right)$ algorithms with their computing times.}
    \label{figure:4}
\end{figure}

\par In our initial test, Gaussian noise with a level of $\sigma=40$dB $\left(\delta\approx 0.0775\right)$ is introduced to the exact data $y$ by calling $y^{\delta}={\rm awgn}(Ax^{\dagger},\delta)$. To investigate the impact of the parameter $\eta$, we evaluated different values for the parameters $\eta$ and $\alpha$. As shown in Table \ref{table:1}, the proposed algorithm demonstrates satisfactory performance when appropriate regularization parameters are chosen. The results indicate that the reconstruction quality improves as $\eta$ increases for a fixed $\alpha$.

\begin{table}[H]
\caption{{\rm SNR} of reconstruction $x^{*}$  utilizing HV-$\left(\ell_{1}^{2}-\eta\ell_{2}^{2}\right)$ algorithm with various noise level.}
\label{table:2}
\centering
\tabcolsep=0.35cm
\tabcolsep=0.01\linewidth
\begin{tabular}{ccccccccccc}
\hline
$\delta$ & $\alpha$ & \multicolumn{1}{c}{$\eta$=0} & \multicolumn{1}{c}{$\eta$=0.2} & \multicolumn{1}{c}{$\eta$=0.4} & \multicolumn{1}{c}{$\eta$=0.6} & \multicolumn{1}{c}{$\eta$=0.8} & \multicolumn{1}{c}{$\eta$=1.0} \\
\hline
Noise free & 1.0$\times 10^{-5}$ & 49.7239 & 49.8394 & 49.9561 & 50.0741 & 50.1934 & \textbf{50.3140} \\
$\sigma=50$dB & 2.0$\times 10^{-5}$ & 38.4811 & 38.6175 & 38.7649 & 39.1086 & 39.2265 & \textbf{39.3472} \\
$\sigma=40$dB & 6.0$\times 10^{-5}$ & 30.6048 & 30.6679 & 30.7327 & 30.8002 & 30.8745 & \textbf{31.1792} \\
$\sigma=30$dB & 3.2$\times 10^{-5}$ & 18.1063 & 18.1860 & 18.2707 & 18.6270 & 18.7666 & \textbf{18.8277} \\
$\sigma=20$dB & 9.0$\times 10^{-4}$ & 8.9087 & 8.9447 & 8.9812 & 9.0181 & 9.0554 & \textbf{9.0930} \\
\hline
\end{tabular}
\end{table}

\par Subsequently, we evaluate the stability of the proposed algorithm for various noise levels $\delta$, which are introduced to the precise data $y^{\dagger}$. The optimal regularization parameters are determined based on the discrepancy principle. The numerical outcomes are detailed in Table \ref{table:2}. A discernible trend indicates that the ${\rm SNR}$ of the reconstructed output $x^{*}$ decreases as the noise levels increase. In the absence of noise, the algorithm exhibits enhanced performance with elevated values of $\eta$, with $\eta=1$ as the optimal configuration. These findings align with the theoretical predictions inherent in the proposed non-convex regularization framework. Additionally, we conducted a comparative analysis of the reconstruction efficacy and computational efficiency of various algorithms. Fig.\ \ref{figure:4}(a)-(b) illustrate the true signal and observed data with $\sigma$=40dB $\left(\delta\approx 0.0775\right)$. Fig.\ \ref{figure:4}(c)-(h) present the reconstructed signals derived from the ISTA, FISTA, and ST-$\left(\alpha\ell_{1}-\beta\ell_{2}\right)$, HV-$\left(\ell_{1}^{2}-\eta\ell_{2}^{2}\right)$, HT-$\ell_{1/2}$, and PG-$\left(\ell_{1}^{2}-\eta\ell_{2}^{2}\right)$ algorithms, accompanied by their respective computational times. The HV-$\left(\ell_{1}^{2}-\eta\ell_{2}^{2}\right)$ and PG-$\left(\ell_{1}^{2}-\eta\ell_{2}^{2}\right)$ algorithms demonstrate superior reconstruction performance compared to the other methodologies, except the HT-$\ell_{1/2}$ algorithm. 

\begin{figure}[htbp]
    \centering
    \includegraphics[scale=0.3]{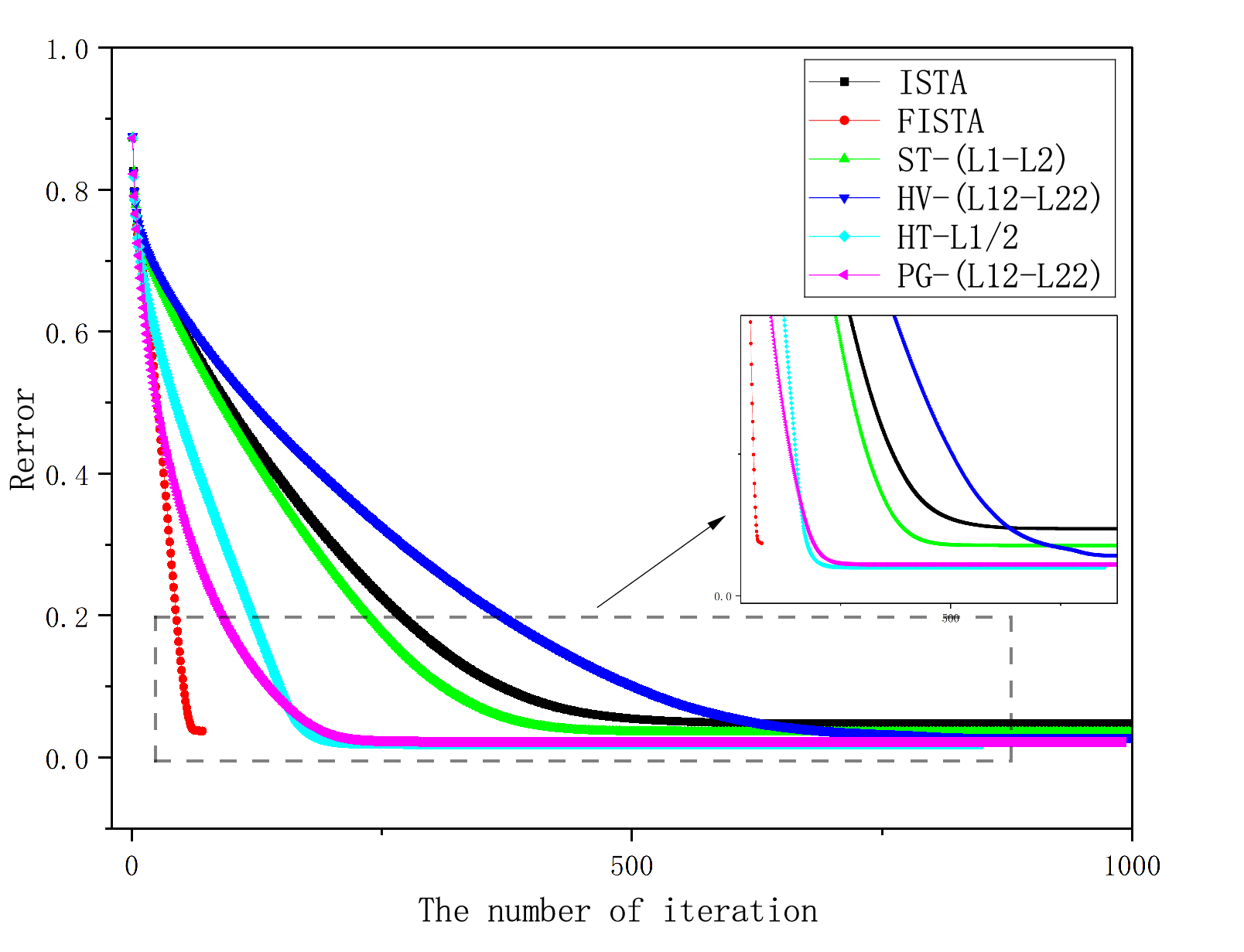}
    \caption{The change curve of Rerror for different algorithms with the increase of iteration.}
    \label{figure:5}
\end{figure}

\par To further elucidate the reconstruction performance of the various algorithms, we present the curve illustrating the change in Rerror as a function of increasing iterations in Fig.\ \ref{figure:5}. Although the Rerror of FISTA experiences a rapid decline, the adaptive increase in the step size results in a suboptimal reconstruction effect, positioning it only marginally above ISTA. Conversely, the Rerror associated with the PG-$\left(\ell_{1}^{2}-\eta\ell_{2}^{2}\right)$ algorithm decreases swiftly, yielding a reconstruction quality that ranks just below that of the HT-$\ell_{1/2}$ algorithm. Notably, the Rerror for HV-$\left(\ell_{1}^{2}-\eta\ell_{2}^{2}\right)$ algorithm declines at a slower rate. However, it still outperforms the ISTA, FISTA, and ST-$\left(\alpha\ell_{1}-\beta\ell_{2}\right)$ algorithms in terms of reconstruction quality. The change curve of the Rerror for different algorithms with the increase of computing time is shown in Fig.\ \ref{figure:6}. Regarding computational time, the HV-$\left(\ell_{1}^{2}-\eta\ell_{2}^{2}\right)$ algorithm exhibits the most extended calculation duration, attributed to its requirement for solving a multivariate linear equation. In contrast, the PG-$\left(\ell_{1}^{2}-\eta\ell_{2}^{2}\right)$ algorithm significantly outperforms the others in speed.

\begin{figure}[htbp]
    \centering
    \subfigure[ISTA and ST-$\left(\alpha\ell_{1}-\beta\ell_{2}\right)$ algorithm]{\includegraphics[scale=0.3]{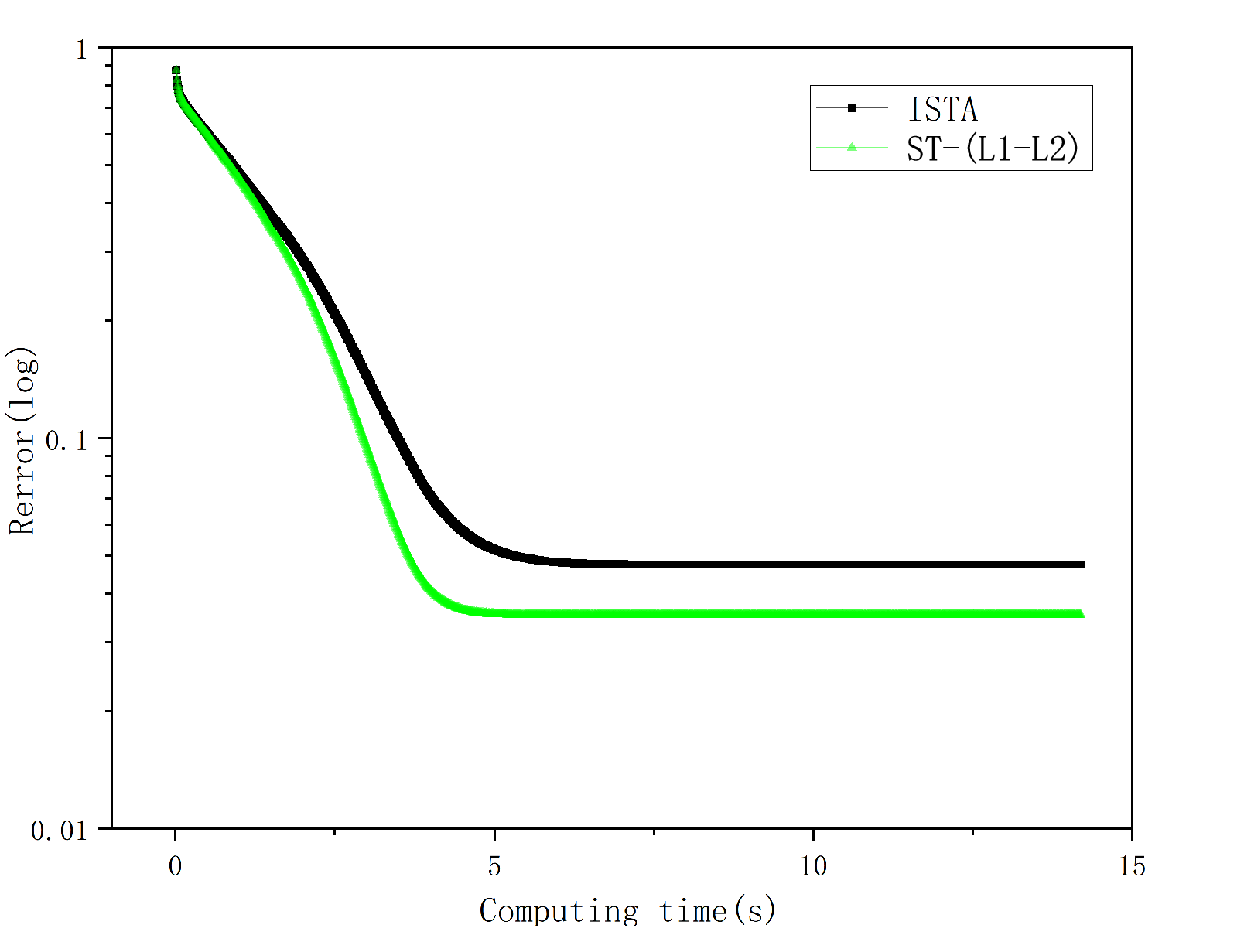}}
    \subfigure[FISTA, HT-$\ell_{1/2}$ and PG-$\left(\ell_{1}^{2}-\eta\ell_{2}^{2}\right)$ algorithms ]{\includegraphics[scale=0.3]{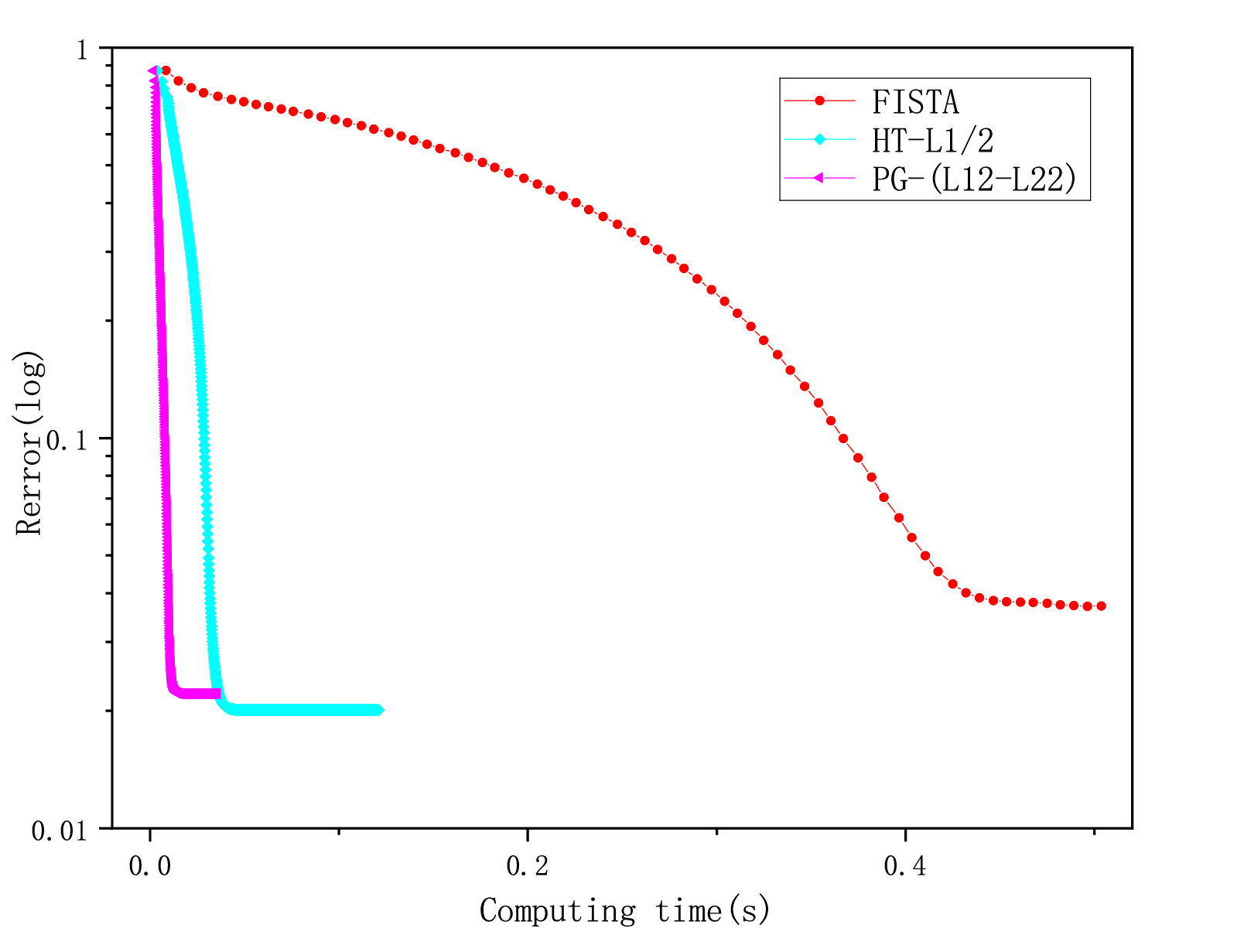}}
    \caption{The change curve of the Rerror for different algorithms with the increase of computing time.}
    \label{figure:6}
\end{figure}

\subsection{Ill-conditioned image deblurring problem}\label{sec6.2}

\par In the second example,  we investigate the ill-conditioned image deblurring problem, which entails removing blurring artifacts from images, commonly arising from factors such as defocus aberration or motion blur. The blurring effect is typically modeled by a Fredholm integral equation of the first kind, expressed as
\begin{equation*}
    \int_{a}^{b}K\left(s,t\right)f\left(t\right)dt = g\left(s\right),
\end{equation*}
where $K\left(s,t\right)$ represents the kernel function, $g\left(s\right)$ denotes the observed image, and $f\left(t\right)$ signifies the true image. We leverage the blur problem implemented in Matlab regularization tools \cite{H07} by calling $[A,b,x^{\dagger}]={\rm blur}(n,{\rm band},\sigma)$, where a Gaussian point-spread function serves as the kernel function defined by
\begin{equation*}
    K\left(s,t\right)=\frac{1}{\pi \sigma^{2}}\exp\left(-\frac{s^{2}+t^{2}}{2\sigma^{2}}\right).
\end{equation*}
The matrix $A$ is characterized as a symmetric $n^{2}\times n^{2}$ Toeplitz matrix, articulated as $A=\left(2\pi\sigma^{2}\right)^{-1}T\otimes T$, with $T$ being an $n\times n$ symmetric banded Toeplitz matrix. The first row of $T$  can be formulated by calling
\begin{equation*}
    z=\left[\exp\left(-\left([0:{\rm band}-1].^{2}\right)/\left(2\sigma^{2}\right)\right); {\rm zeros}\left(1:N-{\rm band}\right)\right].
\end{equation*}
Notably, the parameter $\sigma$ modulates the shape of the Gaussian point spread function, thereby influencing the degree of smoothing; specifically, a larger value for $\sigma$ entails a wider function and renders the problem less ill-posed. In our investigation, we select $n=16$, ${\rm band}=3$, and $\sigma=0.7$. The computed $\left\|A\right\|_{\ell_{2}}$ approximates 1, with a condition number near 30. The adopted settings closely mirror those outlined in Section \ref{sec6.1}. In order to compare the reconstruction effects of different iterative algorithms fairly, we set the maximum number of iterations to ${\rm maxiter} = 1500$ and $L^{k}=1$ for the HV-$\left(\ell_{1}^{2}-\eta\ell_{2}^{2}\right)$ algorithm, $\gamma=1$ for the PG-$\left(\ell_{1}^{2}-\eta\ell_{2}^{2}\right)$ algorithm, while $\lambda=1$ for the ST-$\left(\alpha\ell_{1}-\beta\ell_{2}\right)$ algorithm. The stopping criterion is that the error of the adjacent two iterative solutions is less than $10^{-5}$ or the maximum number of iterations is reached. The initial value $x^{0}$ is generated by calling $x^{0}=0.01{\rm ones}(n,1)$.

\begin{table}[H]
\caption{{\rm SNR} of reconstruction $x^{*}$  utilizing HV-$\left(\ell_{1}^{2}-\eta\ell_{2}^{2}\right)$ algorithm for different values of $\eta$ and $\alpha$ with $\sigma=40$dB.}
\label{table:3}
\centering
\tabcolsep=0.35cm
\tabcolsep=0.01\linewidth
\scalebox{0.9}{
\begin{tabular}{ccccccc}
\hline
$\eta$ & \multicolumn{1}{c}{$\alpha=5.0\times10^{-5}$} & \multicolumn{1}{c}{$\alpha=8.0\times10^{-5}$} & \multicolumn{1}{c}{$\alpha=1.1\times10^{-4}$} & \multicolumn{1}{c}{$\alpha=1.4\times10^{-4}$} & \multicolumn{1}{c}{$\alpha=1.7\times10^{-4}$} & \multicolumn{1}{c}{$\alpha=2.0\times10^{-4}$} \\
\hline
0.0  & 26.6327 & 27.9979 & 29.2200 & 29.8741 &29.8108 & 28.9320 \\
0.1  & 26.7356 & 28.5705 & 29.5722 & 30.1109 &30.0594 & 29.2123 \\
0.2  & 26.8115 & 28.7923 & 29.7950 & 30.3511 &30.3093 & 29.4968 \\
0.3  & 26.8720 & 29.0215 & 29.9938 & 30.5950 &30.5595 & 29.7849 \\
0.4  & 27.1370 & 29.2561 & 30.0614 & 30.8492 &30.8092 & 30.0755 \\
0.5  & 27.2688 & 29.4985 & 31.2287 & 31.0965 &31.0568 & 30.3674 \\
0.6  & 27.4016 & 29.7557 & 31.3932 & 31.3616 &31.3008 & 30.6589 \\
0.7  & 27.5354 & 30.0958 & 31.5541 & 32.1046 &31.5394 & 30.9418 \\
0.8  & 27.6702 & 30.3300 & 31.7107 & 32.2781 &31.7704 & 31.2325 \\
0.9  & 27.8509 & 30.5713 & 31.8620 & 32.4401 &31.9913 & 31.5089 \\
1.0  & \textbf{27.9425} & \textbf{30.8360} & \textbf{32.0071}  & \textbf{32.5916} & \textbf{32.1994} & \textbf{31.7739} \\
\hline
\end{tabular}}
\end{table}

\begin{figure}[htbp]
    \centering
    \subfigure[True image.]{\includegraphics[width=0.45\columnwidth,height=0.3\linewidth]{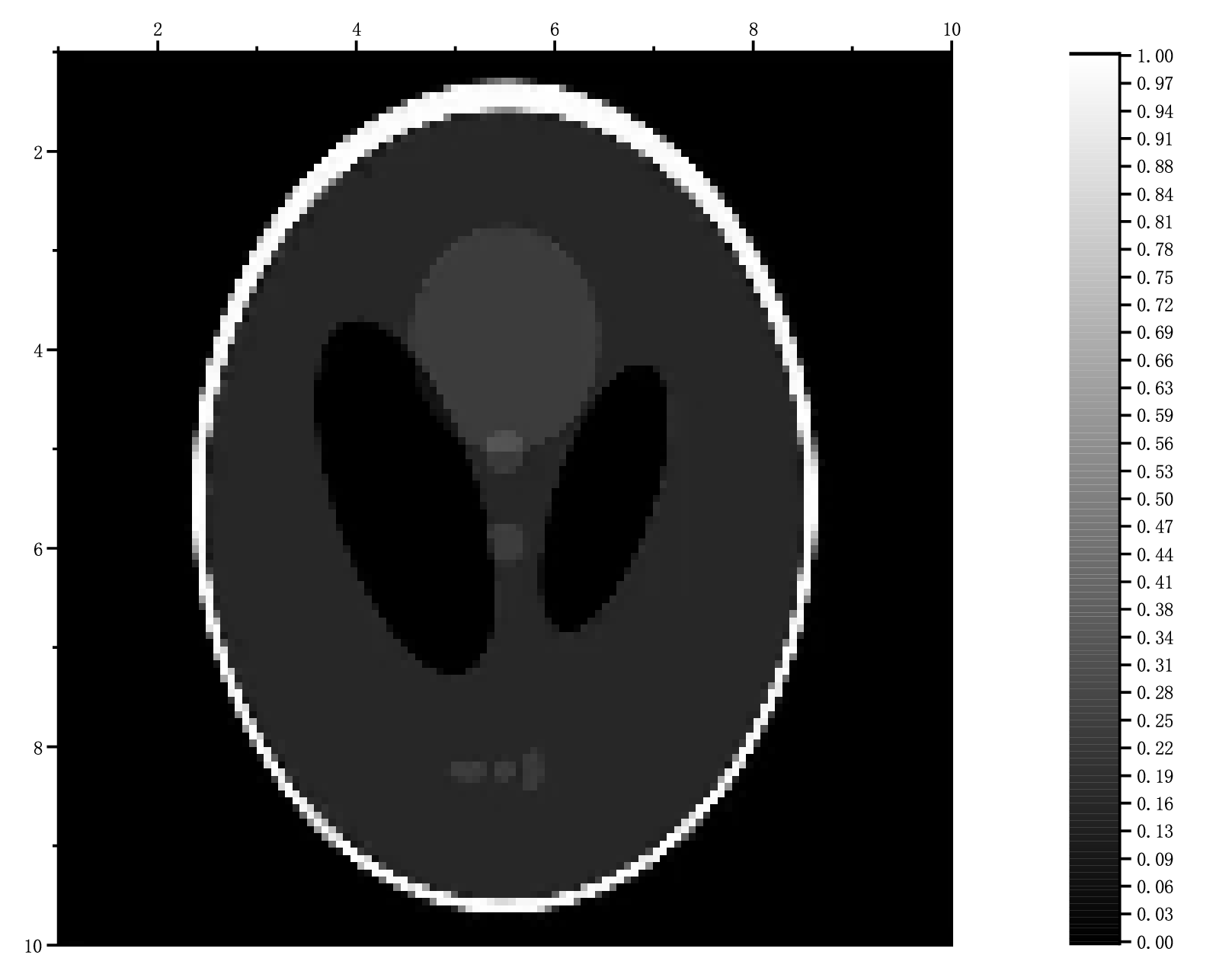}}
    \subfigure[Blurring image($\sigma$=60dB).]{\includegraphics[width=0.45\columnwidth,height=0.3\linewidth]{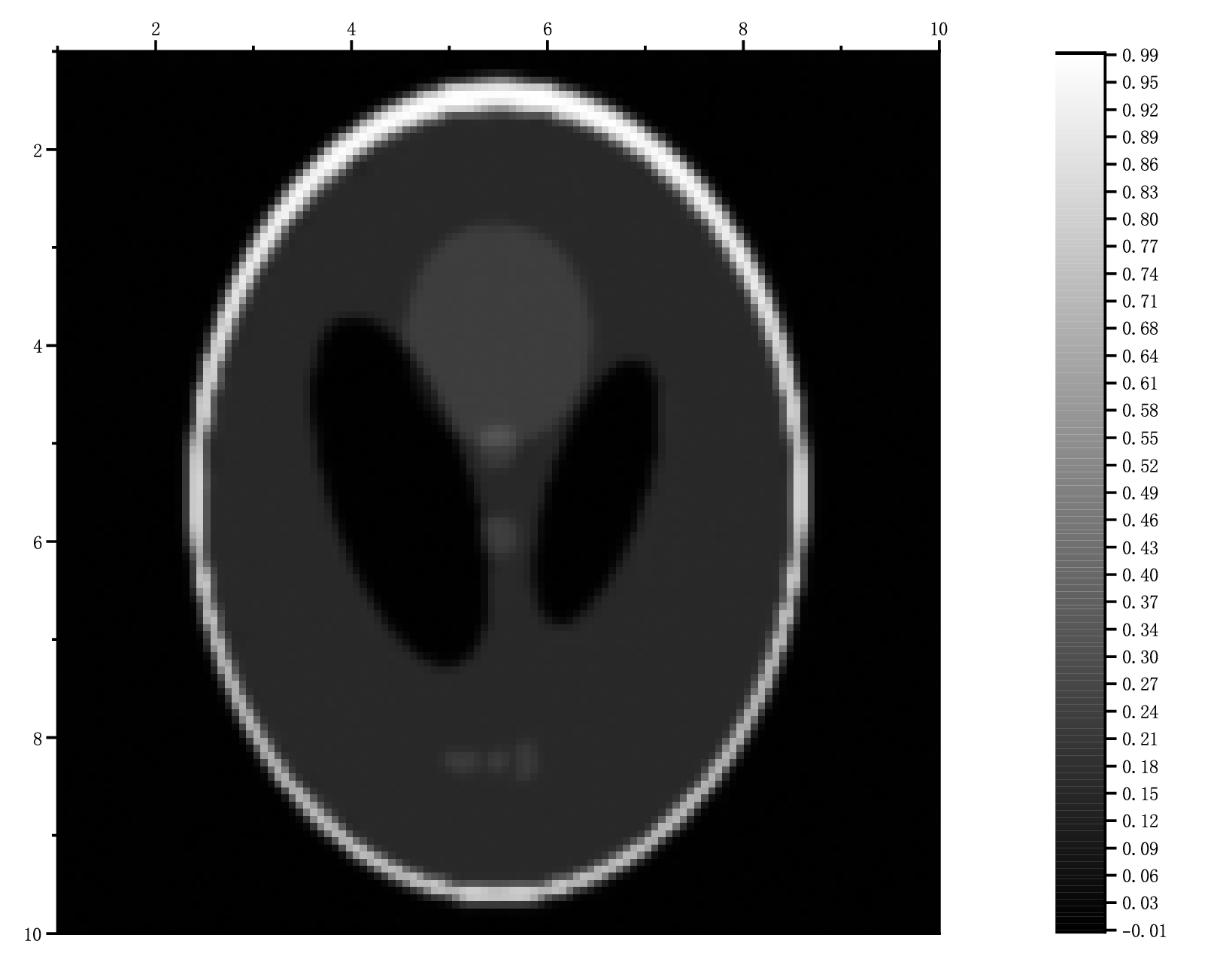}}\\
    \subfigure[HV-$\left(\ell_{1}^{2}-\eta\ell_{2}^{2}\right)$ algorithm, {\rm SNR} = 32.5106, Time = 641.62s ]{\includegraphics[width=0.45\columnwidth,height=0.3\linewidth]{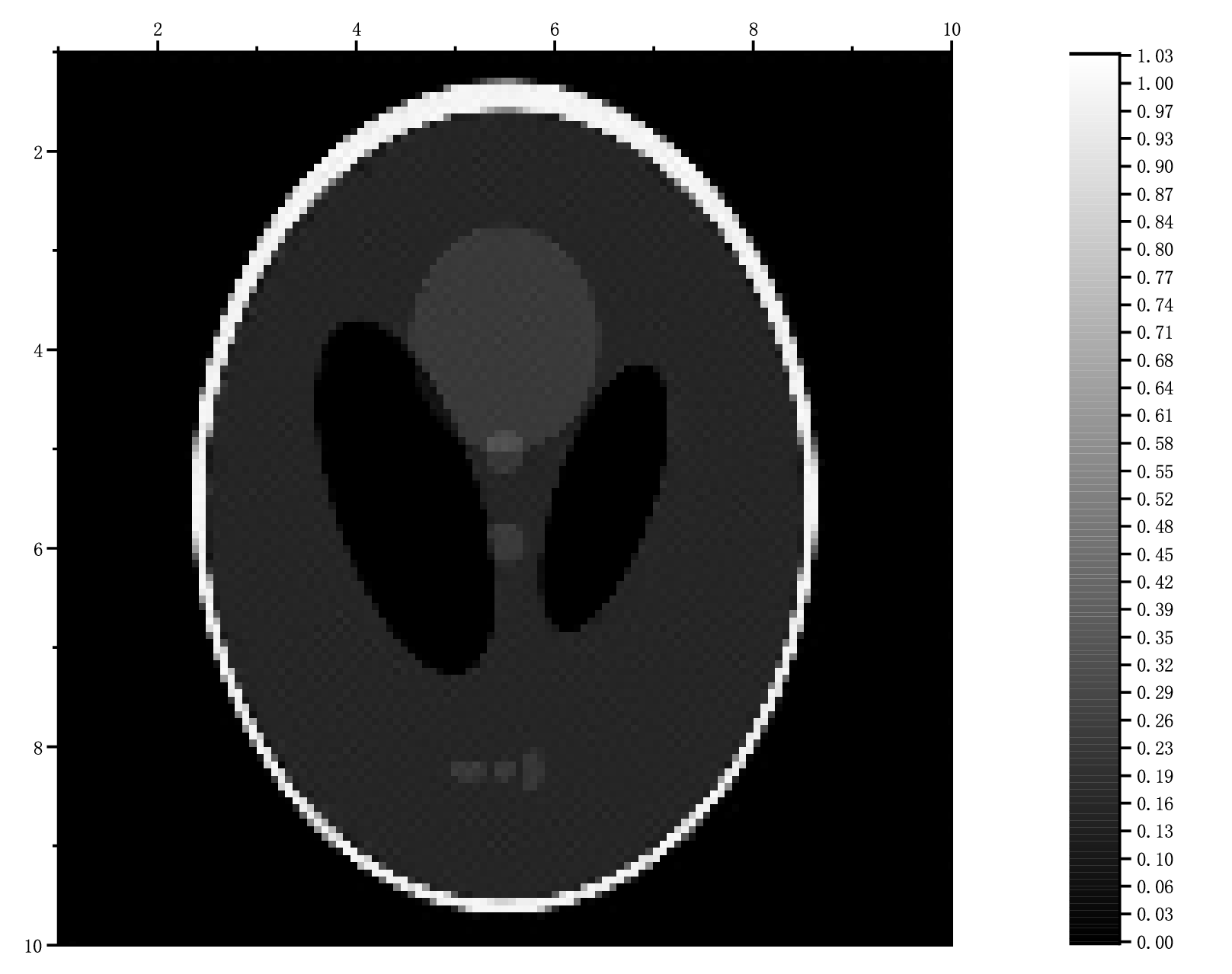}}
    \subfigure[HT-$\ell_{1/2}$ algorithm, {\rm SNR} = 30.6295, Time = 290.40s ]{\includegraphics[width=0.45\columnwidth,height=0.3\linewidth]{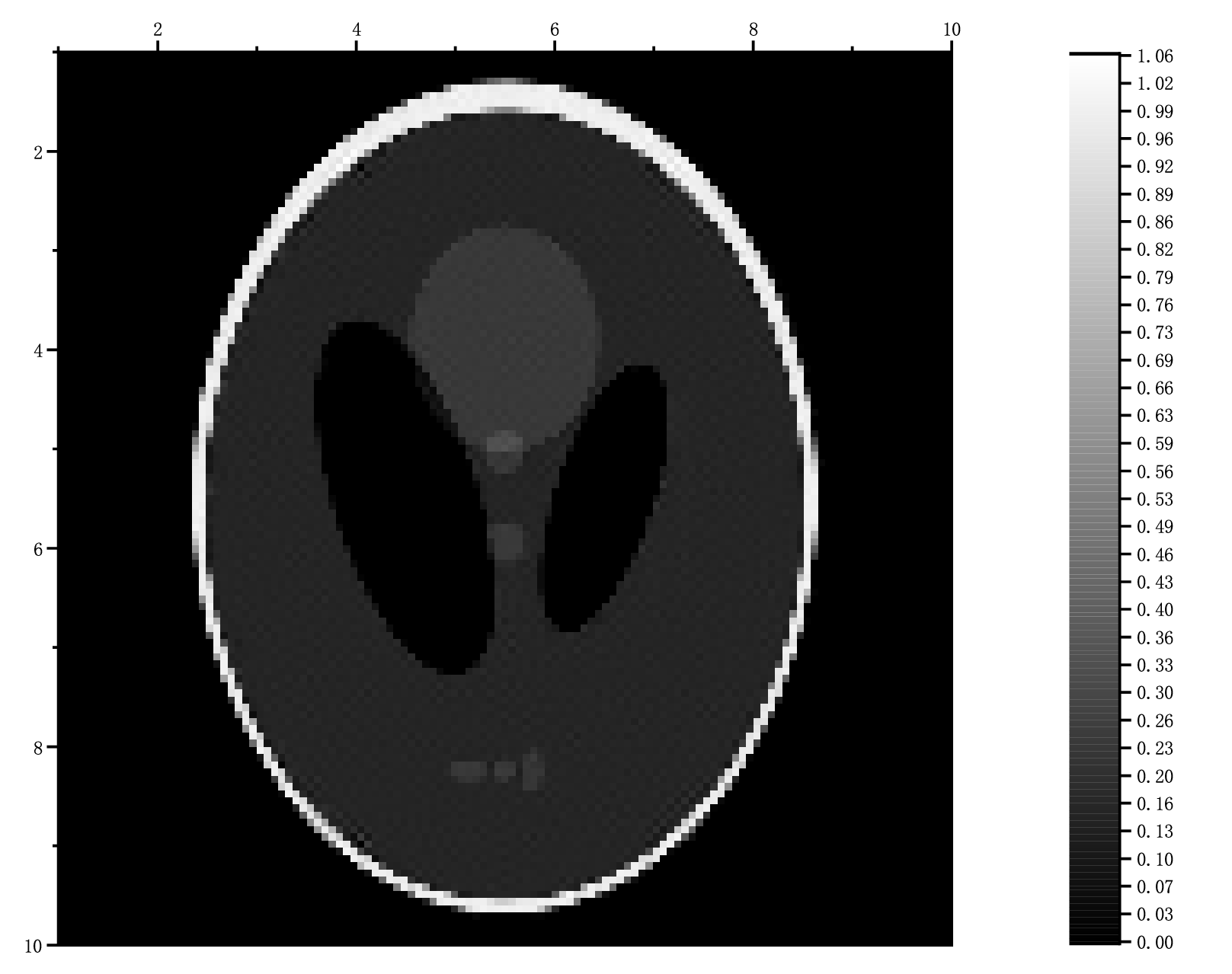}}\\
    \subfigure[PG-$\left(\ell_{1}^{2}-\eta\ell_{2}^{2}\right)$ algorithm, {\rm SNR} = 31.6984, Time = 238.39s ]{\includegraphics[width=0.45\columnwidth,height=0.3\linewidth]{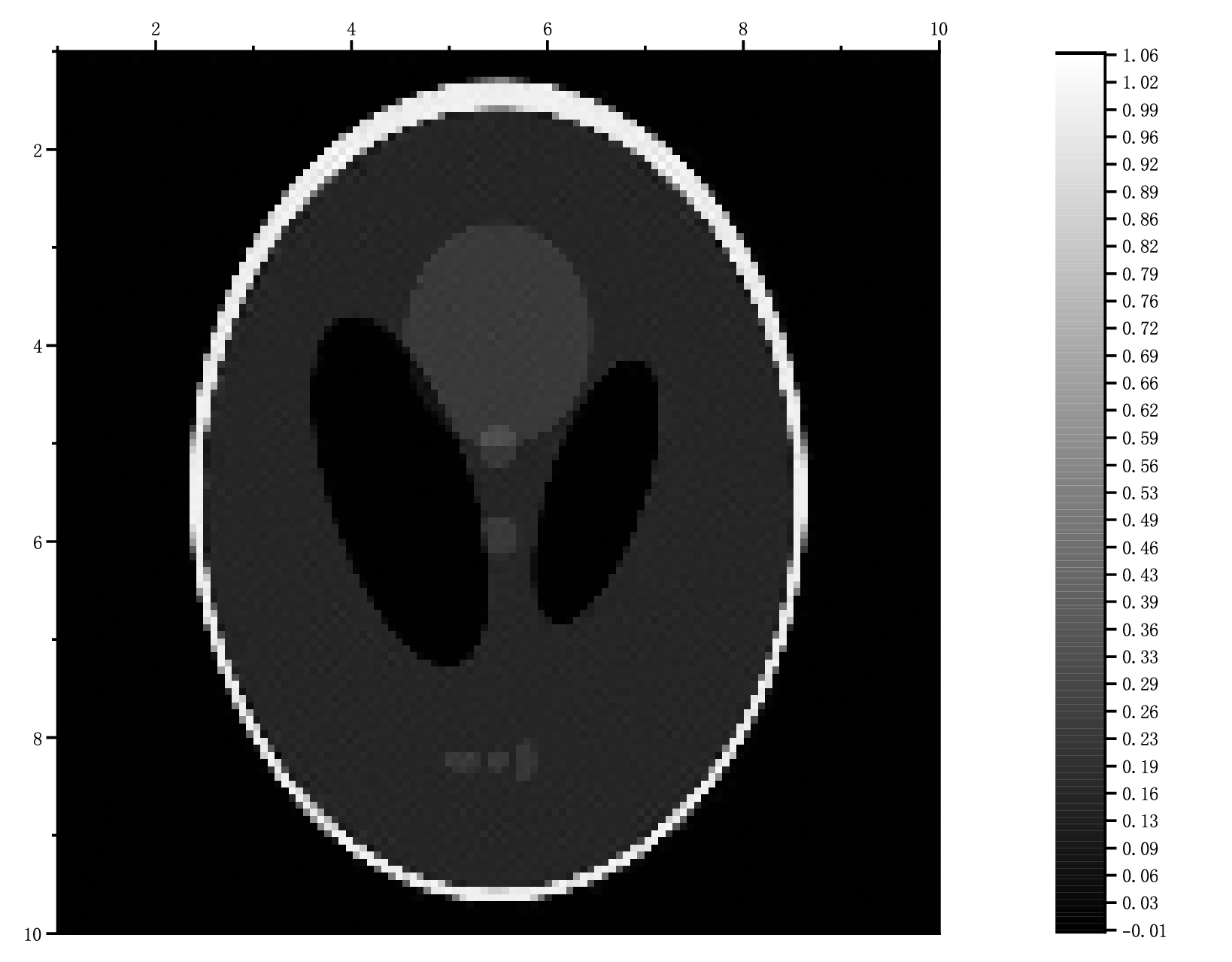}}
    \caption{(a) True image.\ (b) Blurring image with $\delta$=60dB.\ (c)-(e) The The deblurring image of HV-$\left(\ell_{1}^{2}-\eta\ell_{2}^{2}\right)$, HT-$\ell_{1/2}$ and PG-$\left(\ell_{1}^{2}-\eta\ell_{2}^{2}\right)$ algorithms with their computing times.}
    \label{figure:7}
\end{figure}

\par Table \ref{table:3} delineates the performance of the deblurring process under different regularization parameters $\alpha$ and values of $\eta$. The results indicate that reconstruction quality does not consistently improve with increasing $\eta$. Notably, the choice of $\eta=1$ is not optimal. Aligning with the findings from Section \ref{sec6.1}, we observe that performance stability is more pronounced for the case where $\eta=1$ in conjunction with the regularization parameter $\alpha$. Moreover, achieving satisfactory results with larger and smaller values of $\alpha$ is feasible. The stability of deblurring under various noise levels $\delta$ is illustrated in Table \ref{table:4}. We observe that accuracy diminishes as the noise level increases. Notably, the stability in this case is superior to that presented in Section \ref{sec6.1}. 

\begin{table}[H]
\caption{{\rm SNR} of reconstruction $x^{*}$  utilizing HV-$\left(\ell_{1}^{2}-\eta\ell_{2}^{2}\right)$ algorithm with various noise level.}
\label{table:4}
\centering
\tabcolsep=0.35cm
\tabcolsep=0.01\linewidth
\begin{tabular}{ccccccccccc}
\hline
$\delta$ & $\alpha$ & \multicolumn{1}{c}{$\eta$=0} & \multicolumn{1}{c}{$\eta$=0.2} & \multicolumn{1}{c}{$\eta$=0.4} & \multicolumn{1}{c}{$\eta$=0.6} & \multicolumn{1}{c}{$\eta$=0.8} & \multicolumn{1}{c}{$\eta$=1.0} \\
\hline
Noise free & 1.0$\times 10^{-5}$ & 56.3851 & 57.3481 & 58.4322 & 59.6720 & 61.1199 & \textbf{62.8595} \\
50dB & 4.0$\times 10^{-5}$ & 37.4118 & 37.9523 & 38.5171 & 39.1230 & 40.2378 & \textbf{40.5599} \\
40dB & 1.4$\times 10^{-4}$ & 29.8741  & 30.3511 & 30.8492 & 31.3616 & 32.2781 & \textbf{32.5916} \\
30dB & 4.5$\times 10^{-4}$ & 20.6638 & 20.8338 & 20.9669 & 21.0579 & \textbf{21.1598} & 21.1047 \\
20dB & 1.6$\times 10^{-3}$ & 9.0261 & 9.0318 & 9.0383 & \textbf{9.0457} & 9.0429 & 9.0318 \\
\hline
\end{tabular}
\end{table}

\par Additionally, we change the setting $n=125$ with other settings as before and conduct a comparative analysis of the deblurring and computational efficiency associated with different algorithms. In this case, the computing times of ISTA, FISTA and ST-$\left(\alpha\ell_{1}-\beta\ell_{2}\right)$ algorithm are too long to obtain the deblurring results, so we only consider the deblurring results of HV-$\left(\ell_{1}^{2}-\eta\ell_{2}^{2}\right)$, HT-$\ell_{1/2}$ and PG-$\left(\ell_{1}^{2}-\eta\ell_{2}^{2}\right)$ algorithms. Fig.\ \ref{figure:7}(a)-(b) depict the original image and blurred image with $\sigma$=60dB $(\delta\approx 0.1246)$. Fig.\ \ref{figure:7}(c)-(e) showcase the deblurred outputs generated by the HV-$\left(\ell_{1}^{2}-\eta\ell_{2}^{2}\right)$, HT-$\ell_{1/2}$, and PG-$\left(\ell_{1}^{2}-\eta\ell_{2}^{2}\right)$ algorithms, alongside their respective computational times. It is evident that the HV-$\left(\ell_{1}^{2}-\eta\ell_{2}^{2}\right)$ and PG-$\left(\ell_{1}^{2}-\eta\ell_{2}^{2}\right)$ algorithms outperform their counterparts regarding deblurring quality.

\begin{figure}[htbp]
    \centering
    \includegraphics[scale=0.3]{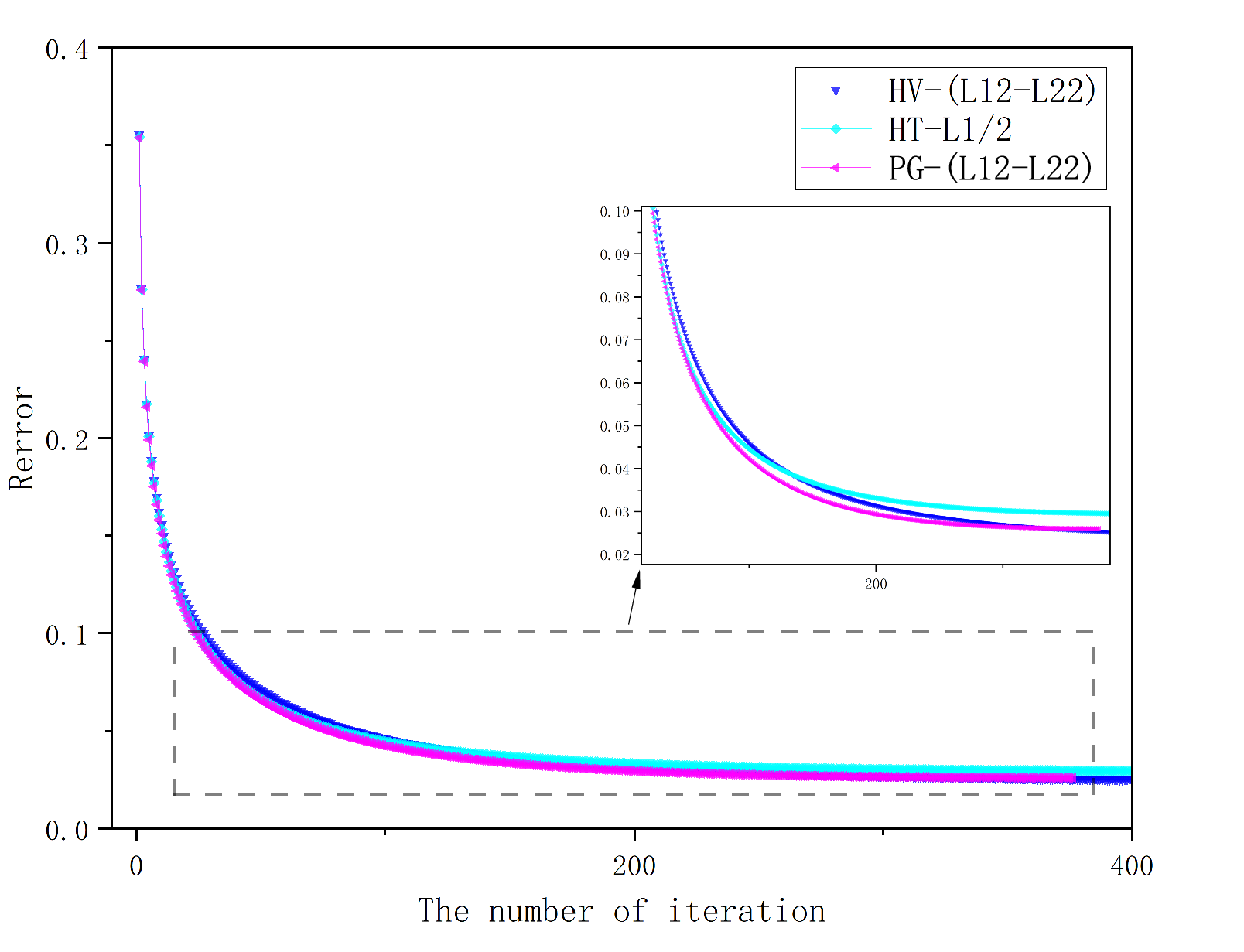}
    \caption{The change curve of Rerror for different algorithms with the increase of interation.}
    \label{figure:8}
\end{figure}

\par To better compare the deblurring performances across different algorithms, we present the Rerror change curve about increasing iterations in Fig.\ \ref{figure:8}. The PG-$\left(\ell_{1}^{2}-\eta\ell_{2}^{2}\right)$ algorithm takes the shortest time to achieve a better deblurring effect than HT-$\ell_{1/2}$, and HV-$\left(\ell_{1}^{2}-\eta\ell_{2}^{2}\right)$ algorithm takes a longer time to achieve the best deblurring effect. The change curve of the Rerror for different algorithms with the increase of computing time is shown in Fig.\ \ref{figure:9}. In terms of computational efficiency, the HV-$\left(\ell_{1}^{2}-\eta\ell_{2}^{2}\right)$ algorithm necessitates the most substantial computational resources due to the complexity of solving multivariate linear equations. However, as the scale increases, the calculation time increases within an acceptable range. In contrast, the PG-$\left(\ell_{1}^{2}-\eta\ell_{2}^{2}\right)$ algorithm demonstrates a significantly reduced computational duration compared to the others.

\begin{figure}[htbp]
    \centering
    \includegraphics[scale=0.3]{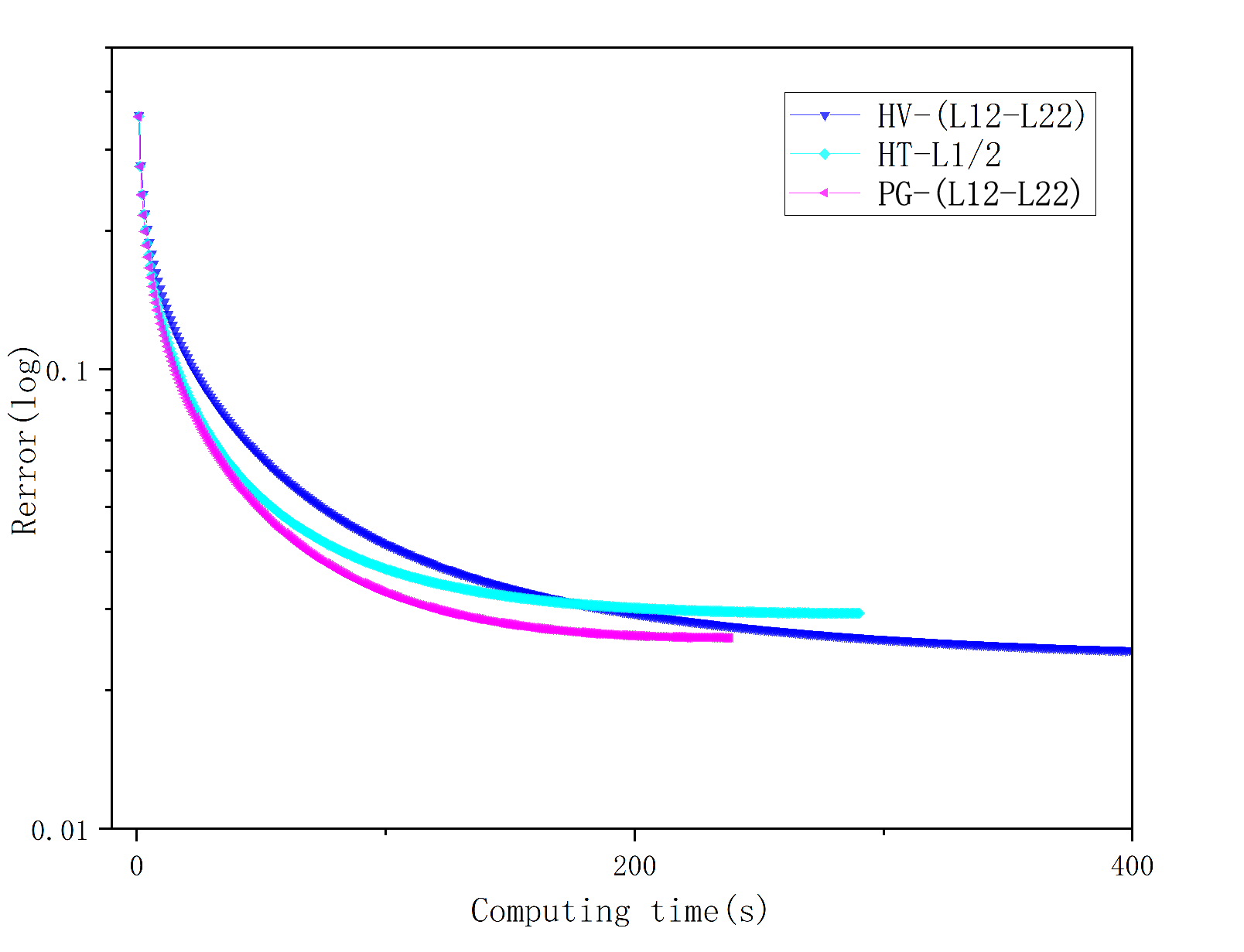}
    \caption{The change curve of Rerror for different algorithms with the increase of computing time.}
    \label{figure:9}
\end{figure}

\par Finally, to assess the effectiveness of the constraint radius selection method, we employ Algorithm \ref{alg4} to address the problems of compressive sensing (CS) and image deblurring (DB) as detailed in Section \ref{sec6}. Fig.\ \ref{figure:3} illustrates the relationship between the constraint radius and relative error throughout iterations. For CS, this evidence underscores the necessity of selecting $\|x^{\dagger}\|_{\ell_{1}}^{2}=576$, facilitating the recovery of $R= 578.1250$ through Algorithm \ref{alg4}, thereby substantiating the efficacy of the radius selection method employed. For DB, this evidence underscores the necessity of selecting $\|x^{\dagger}\|_{\ell_{1}}^{2}=2.6897{\rm E}+6$, facilitating the recovery of $R=2.6855{\rm E}+6$ through Algorithm \ref{alg4}, thereby validating the efficacy of the radius selection methodology employed.

\begin{figure}[htbp]
    \centering
    \subfigure[PG-$\left(\ell_{1}^{2}-\eta\ell_{2}^{2}\right)$ algorithm for CS]{\includegraphics[scale=0.3]{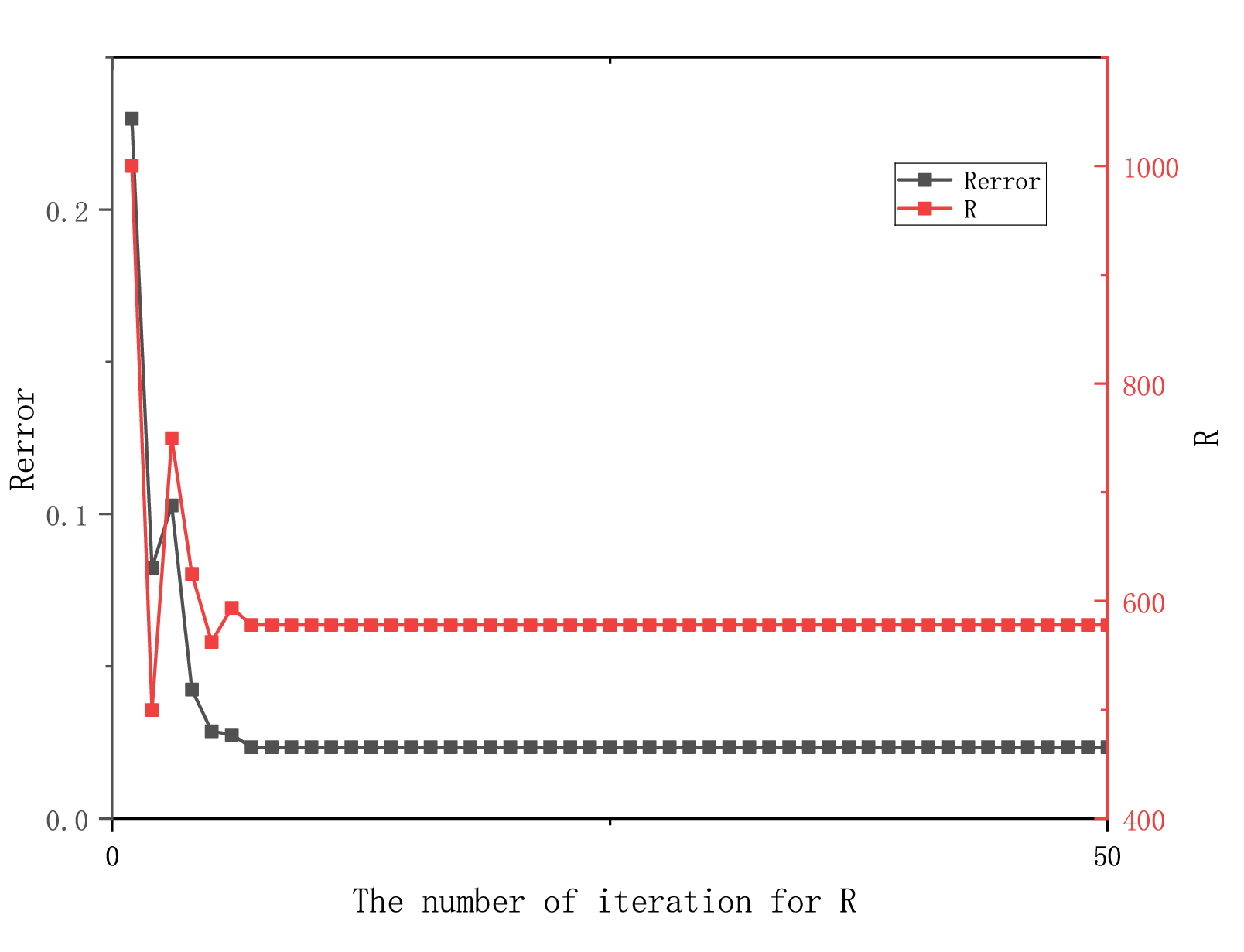}}
    \subfigure[PG-$\left(\ell_{1}^{2}-\eta\ell_{2}^{2}\right)$ algorithm for DB]{\includegraphics[scale=0.3]{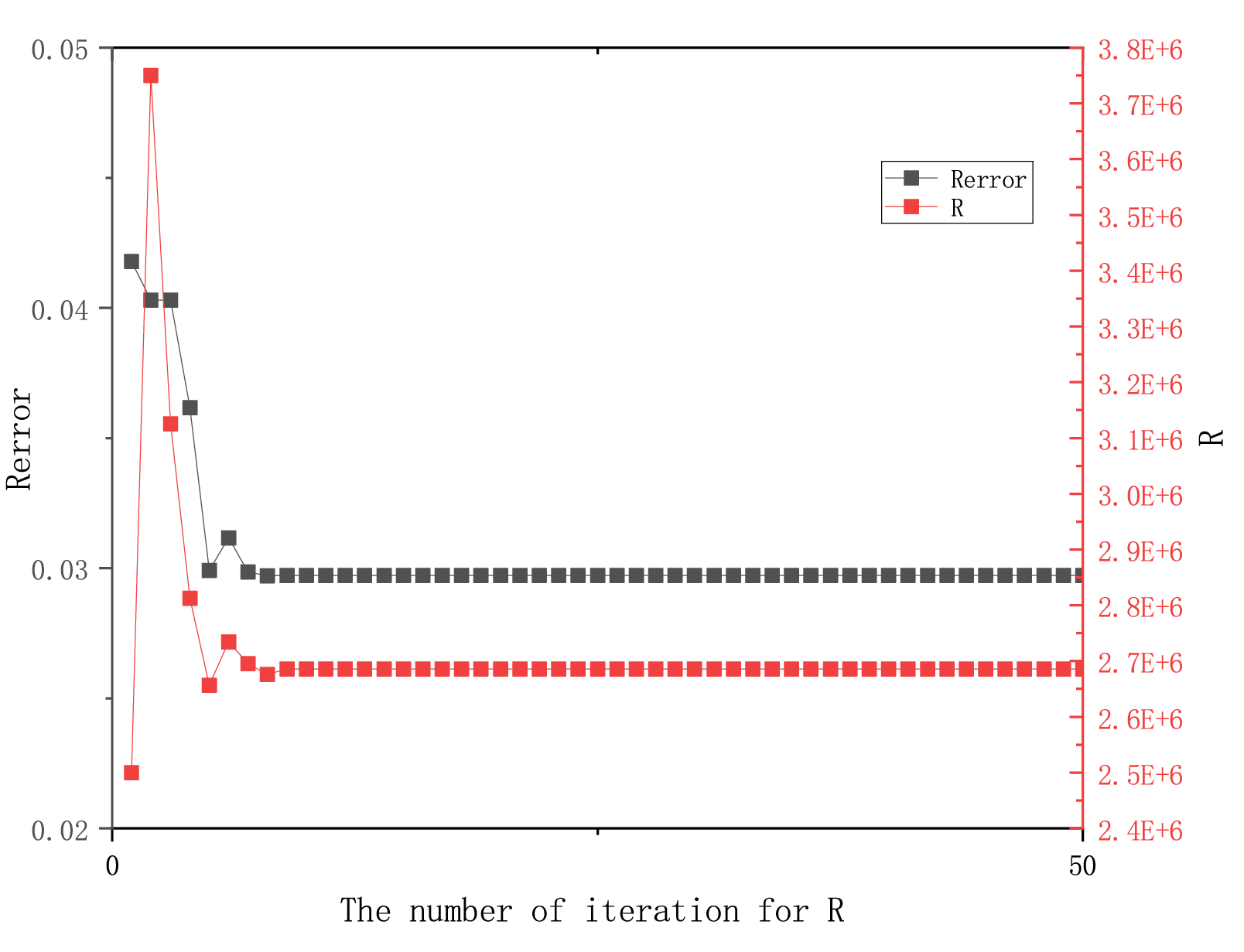}}\\
    \caption{The constraint radius and relative error vary with the number of iterations using Algorithm \ref{alg4}. (a) For compressive sensing (CS), the signal dimension is 200$\times$1 with a noise level of $\sigma = 40$dB. (b) For the image deblurring (DB), the image dimension is 125$\times$125 with a noise level of $\sigma = 60$dB.}
    \label{figure:3}
\end{figure}

\section{Conclusion}\label{sec7}

\par We have proposed and conducted a comprehensive analysis of a novel non-convex $\ell_{1}^{2}-\eta\ell_{2}^{2}$ $\left(0<\eta\leq 1\right)$ regularization technique, aimed at enhancing sparse recovery. The theoretical framework establishes a convergence rate of $\mathcal{O}\left(\delta\right)$ under a specified source condition, which applies to both a priori and a posteriori parameter selection strategies. To implement this methodology, we introduced an HV-$\left(\ell_{1}^{2}-\eta\ell_{2}^{2}\right)$ algorithm derived from the proximal gradient method, designed to effectively address the $\ell_{1}^{2}-\eta\ell_{2}^{2}$ sparse regularization problem. Additionally, we developed a PG-$\left(\ell_{1}^{2}-\eta\ell_{2}^{2}\right)$ algorithm that leverages a combination of projected gradient techniques and surrogate function methods to accelerate the solution process for the stated optimization problem. Numerical experiments demonstrate that our proposed algorithm significantly outperforms traditional $\ell_{1}$ and $\alpha\ell_{1}-\beta\ell_{2}$ regularization, regardless of whether the operator $A$ is well-conditioned or ill-conditioned. Furthermore, it performs comparably to the $\ell_{1/2}$ regularization. These findings suggest that the $\ell_{1}^{2}-\eta\ell_{2}^{2}$ regularization framework not only surpasses conventional sparsity-promoting methods but also represents a compelling alternative to $\ell_{p}\left(0<p\leq 1\right)$  regularization techniques.

\section{Conflict of interest statement}
There are no conflicts of interest regarding this submission, and all authors have approved the manuscript for publication.

\end{document}